\documentclass[oneside,11pt]{article}

\usepackage{graphicx, subfigure}
\usepackage[top=1in,bottom=1in,left=1in,right=1in]{geometry}
\usepackage[sort&compress]{natbib} \setlength{\bibsep}{0.0pt}
\usepackage{amsfonts, amsmath, amssymb, amsthm, constants, bbm}
\usepackage{MnSymbol}
\usepackage{mathtools}
\usepackage{enumitem}
\usepackage{caption}
\usepackage{algpseudocode}
\usepackage{algorithm}

\usepackage{cellspace}
\setlength\cellspacetoplimit{4pt}
\setlength\cellspacebottomlimit{4pt}
\usepackage{makecell}
\setcellgapes{4pt}

\usepackage[dvipsnames]{xcolor}
\definecolor{darkred}{RGB}{100,0,0}
\definecolor{darkgreen}{RGB}{0,100,0}
\definecolor{darkblue}{RGB}{0,0,150}
\definecolor{citecol}{RGB}{30,80,150}
\definecolor{tabcol}{RGB}{200,230,255}

\usepackage{colortbl}

\usepackage{hyperref}
\hypersetup{colorlinks=true, linkcolor=darkred, citecolor=citecol, urlcolor=darkblue}
\usepackage{url}

\newtheorem{thm}{Theorem}[section]
\newtheorem{prp}[thm]{Proposition}
\newtheorem{lem}[thm]{Lemma}

\newtheorem{dfn}[thm]{Definition}
\newtheorem{ass}[thm]{Assumption}

\theoremstyle{remark}
\newtheorem{rem}{Remark}[section]

\def\beq{\begin{equation}} 
\def\eeq{\end{equation}}
\def\beqn{\begin{eqnarray*}}
\def\eeqn{\end{eqnarray*}}
\def\Bal{\begin{align}}
\def\Eal{\end{align}}
\def\Bitem{\begin{itemize}\setlength{\itemsep}{.2in}}
\def\bitem{\begin{itemize}\setlength{\itemsep}{.05in}}
\def\eitem{\end{itemize}}
\def\blatin{\begin{enumerate}\setlength{\itemsep}{.05in}\renewcommand{\labelenumi}{\roman{enumi}.}}
\def\elatin{\end{enumerate}}
\def\Benum{\begin{enumerate}\setlength{\itemsep}{.2in}}
\def\benum{\begin{enumerate}\setlength{\itemsep}{.05in}}
\def\eenum{\end{enumerate}}
\def\bmult{\begin{multline*}}
\def\emult{\end{multline*}}
\def\bcenter{\begin{center}}
\def\ecenter{\end{center}}
\def\bframe{\begin{frame}}
\def\eframe{\end{frame}}

\newcommand{\thmref}[1]{Theorem~\ref{thm:#1}}
\newcommand{\prpref}[1]{Proposition~\ref{prp:#1}}

\newcommand{\lemref}[1]{Lemma~\ref{lem:#1}}
\newcommand{\secref}[1]{Section~\ref{sec:#1}}
\newcommand{\appref}[1]{Appendix~\ref{app:#1}}
\newcommand{\figref}[1]{Figure~\ref{fig:#1}}

\newcommand{\algoref}[1]{Algorithm~\ref{algo:#1}}
\newcommand{\remref}[1]{Remark~\ref{rem:#1}}

\newcommand{\assref}[1]{Assumption~\ref{ass:#1}}





\def\cA{\mathcal{A}}
\def\cB{\mathcal{B}}
\def\cC{\mathcal{C}}

\def\cF{\mathcal{F}}
\def\cG{\mathcal{G}}
\def\cH{\mathcal{H}}

\def\cR{\mathcal{R}}



\def\bn{\mathbf{n}}



\def\bbE{\mathbb{E}}

\def\bbH{\mathbb{H}}

\def\bbN{\mathbb{N}}

\def\bbP{\mathbb{P}}

\def\bbR{\mathbb{R}}

\def\bbT{\mathbb{T}}

\def\bbZ{\mathbb{Z}}
\def\ind{\mathbbm{1}}


\newcommand{\Var}{\operatorname{Var}}

\def\wh{\widehat}


\renewcommand{\>}{\rangle}
\newcommand{\inner}[2]{\langle #1, #2 \rangle}

\newcommand{\floor}[1]{\lfloor #1 \rfloor}
\newcommand{\ceil}[1]{\lceil #1 \rceil}

\def\1{\mathbbm{1}}

\def\({\left(}
\def\){\right)}

\DeclareMathOperator{\tv}{TV}

\DeclareMathOperator{\op}{op} 

\DeclareMathOperator{\Leb}{Leb}

\DeclareMathOperator{\Id}{Id}

\DeclareMathOperator{\II}{I\!I}

\DeclareMathOperator{\pr}{pr}

\DeclareMathOperator{\supp}{supp}
\DeclareMathOperator{\vol}{vol}
\DeclareMathOperator{\inj}{inj}


\newcommand{\ve}{\varepsilon}
\newcommand{\cdt}{\mathcal{C}_{d} (\tau)}
\newcommand{\cdatl}{\mathcal{C}_{d,\alpha} (\tau,L)}

\newcommand{\hfhx}{\widehat f_{h}(x)}

\newcommand{\sdab}{\Sigma^d_{\alpha,\beta}}
\DeclareMathOperator{\od}{1D}
\DeclareMathOperator{\ad}{adapt}

\newcommand{\clement}[1]{\textcolor{black}{#1}}
\newcommand{\marc}[1]{\textcolor{black}{#1}}



\pagestyle{myheadings}

\begin{document}

\thispagestyle{empty}

\title{Density estimation on an unknown submanifold}
\author{Cl\'ement Berenfeld\footnote{Cl\'ement Berenfeld, Universit\'e Paris-Dauphine \& PSL, CNRS, CEREMADE, 75016 Paris, France. email: berenfeld@ceremade.dauphine.fr}\;\; and Marc Hoffmann\footnote{Marc Hoffmann, Universit\'e Paris-Dauphine \& PSL, CNRS, CEREMADE, 75016 Paris, France. email: hoffmann@ceremade.dauphine.fr}
}
\date{\today}
\maketitle

\begin{abstract}
We investigate density estimation from a $n$-sample in the Euclidean space $\mathbb R^D$, when the data are supported by an unknown submanifold $M$ of possibly unknown dimension $d < D$, under a {\it reach} condition. We investigate several nonparametric kernel methods, with data-driven bandwidths that incorporate some learning of the geometry via a local dimension estimator. When $f$ has H\"older smoothness $\beta$ and $M$ has regularity $\alpha$, our estimator achieves the rate $n^{-\alpha \wedge \beta/(2\alpha \wedge \beta+d)}$ for a pointwise loss. The rate does not depend on the ambient dimension $D$ and we establish that our procedure is asymptotically minimax for $\alpha \geq \beta$. 
Following Lepski's principle, a bandwidth selection rule is shown to achieve smoothness adaptation. We also investigate the case $\alpha \leq \beta$: by estimating in some sense the underlying geometry of $M$, we establish in dimension $d=1$ that the minimax rate is $n^{-\beta/(2\beta+1)}$ proving in particular that it does not depend on the regularity of $M$.
Finally, a numerical implementation is conducted on some case studies in order to confirm the practical feasibility of our estimators.
\end{abstract}

\vspace{3mm}

\noindent \textbf{Mathematics Subject Classification (2010)}: 62C20, 62G05, 62G07.

\noindent \textbf{Keywords}: Point clouds, manifold reconstruction, nonparametric estimation, adaptive density estimation, kernel methods, Lepski's method.

\section{Introduction}

\subsection{Motivation} \label{sec:motiv}
Suppose we observe an $n$-sample
$(X_1,\ldots, X_n)$ of size $n$ distributed on an Euclidean space $\mathbb R^D$ according to some density function $f$. We wish to recover $f$ at some point arbitrary point $x \in \mathbb R^D$ nonparametrically. If the smoothness of $f$ at $x$ measured in a strong sense is of order $\beta$ -- for instance by a H\" older condition or with a prescribed number of derivatives -- then the optimal (minimax) rate for recovering $f(x)$ is of order $n^{-\beta/(2\beta+D)}$ and is achieved by kernel or projection methods, see {\it e.g.} the classical textbooks \cite{S86, DG85} or \citep[Sec. 1.2-1.3]{Tsy08}. Extension to data-driven bandwidths \citep{B84, C91} offers the possibly to adapt to unknown smoothness, see \citep{GL08, GL11, GL14} for a modern mathematical formulation. More generally, recommended reference on adaptive estimation is the textbook by \cite{gine2016mathematical}. In many situations however, the dimension $D$ of the ambient space is large, hitherto disqualifying such methods for pratical applications. Opposite to the curse of dimensionality, a broad guiding principle in practice is that the observations $(X_1,\ldots, X_n)$ actually live on smaller dimensional structures and that the effective dimension of the problem is smaller if one can take advantage of the geometry of the data \citep{FMN16}. This classical paradigm probably goes back to a conjecture of \citep{S82} that paved the way to the study of the celebrated single-index model in nonparametric regression, where a structural assumption is put in the form $f(x) = g(\langle \vartheta, x\rangle)$, where $\langle \cdot, \cdot \rangle$ is the scalar product on $\mathbb R^D$, for some unknown univariate function $g:\mathbb R \rightarrow \mathbb R$ and direction $\vartheta \in \mathbb R^D$. Under appropriate assumptions, the minimax rate of convergence for recovering $f(x)$ with smoothness $\beta$ drops to $n^{-\beta/(2\beta+1)}$ and does not depend on the ambient dimension $D$, see {\it e.g.} \citep{GL07, LS14} and the references therein. Also, in the search for significant variables, one postulates that $f$ only depends on $d<D$ coordinates, leading to the structural assumption $f(x_1,\ldots, x_D) = F(x_{i_1}, \ldots x_{i_d})$ for some unknown function $F: \mathbb R^d \rightarrow \mathbb R$ and $\{i_1,\ldots, i_d\} \subset \{1,\ldots, D\}$. In an analogous setting, the minimax rate  of convergence becomes $n^{-\beta/(2\beta+d)}$ and this is also of a smaller order of magnitude than $n^{-\beta/(2\beta+D)}$, see \citep{HL02} in the white noise model.\\

The next logical step is to assume that the data $(X_1,\ldots, X_n)$ live on a $d$-dimensional submanifold $M$ of the ambient space $\mathbb R^D$. When the manifold is known prior to the experiment, nonparametric density estimation dates back to \citep{DG85} when $M$ is the circle, and on a homogeneous Riemannian manifold by \citep{Hen90}, see also \cite{Pel05}. Several results are known for specific geometric structures like the sphere or the torus involved in many applied situations: inverse problems for cosmological data \citep{KK02, KK09, KPP11}, in geology \citep{HWC87} or flow calculation in fluid mechanics \citep{ES00}. For genuine compact homogeneous Riemannian manifolds, a general setting for smoothness adaptive density estimation and inference has recently been considered by \cite{KNP12}, or even in more abstract metric spaces in \cite{Cal18}. See also \cite{baldi2009adaptive, castillo2014thomas} and the references therein. A common strategy adapts conventional nonparametric tools like projection or kernel methods to the underlying geometry, via the spectral analysis of the Beltrami-Laplace operator on $M$. Under appropriate assumptions, this leads to exact or approximate eigenbases (spherical harmonics for the sphere, needlets and so on) or properly modified kernel methods, according to the Riemannian metric on $M$.
\\

If the submanifold $M$ itself is unknown, getting closer in spirit to a dimension reduction approach, the situation becomes drastically different: $M$ hence its geometry is unknown, and considered as a nuisance parameter. In order to recover the density $f$ at a given point $x \in \mathbb R^D$ of the ambient space, one has to understand the minimal geometry of $M$ that must be learned from the data and how this geometry affects the optimal reconstruction of $f$. This is the topic of the paper.  \\

\marc{We consider in the paper a seemingly unusual framework where the support of a distribution is unknown while the aim is to recover the density at a point $x \in \mathbb R^D$ which is known to be on the support. As mentioned above}, this actually covers at least two situations:
\bitem
\item The data are high-dimensional and it is reasonable to believe that they actually lie on a smaller dimensional subset of the ambient space $\mathbb R^D$, which can be assumed to be a submanifold. In that case, $x$ can be seen as an observation $X_0$ from our dataset, and the analysis can be (implicitly) performed conditional on $X_0$;
\item The data naturally lie on a submanifold, like a spheroid for geological application, or a cell membrane in microbiology (see for instance \cite{KPS14} who describe a technique that yields such a point cloud). In this case, $x$ can be seen as an observation $X_0$ like above, but there is also the situation where the statistician can know whether or not a given point $x$ is within the support (for instance a point on a cell membrane, or a geographical location on the Earth surface) without knowing the geometric feature of the latter and without needing to estimate them.
\eitem

\subsection{Main results}

We construct a class of compact smooth submanifolds of dimension $d$ of the Euclidean space $\mathbb R^D$, without boundaries, that constitute generic models for the unknown support of the target density $f$ that we wish to reconstruct. We further need a {\it reach condition}, a somehow unavoidable notion in manifold reconstruction that goes back to \cite{Fed59}: it is a geometric invariant that quantifies both local curvature conditions and how tightly the submanifold folds on itself. It is related to the scale at which the sampling rate $n$ can effectively recover the geometry of the submanifold, see Section \ref{sec:reach} below.
We consider regular manifolds $M$ with reach bounded below that satisfy the following property: $M$ admits a local parametrization at every point $x \in M$ by its tangent space $T_x M$, and this parametrization is sufficiently regular. A natural candidate is given by the exponential map $\exp_x: T_xM \rightarrow M \subset \bbR^D$. More specifically, for some regularity parameter \marc{$\alpha \geq  0$}, we require a certain uniform bound for the \marc{$\alpha+1$}-fold differential of the exponential map to hold, quantifying in some sense the regularity of the parametrization in a minimax spirit. \marc{Our approach is close to that of \citet[Def. 1]{AL17} that consider arbitrary parametrizations among those close to the inverse of the projection onto tangent spaces}. Given a density function $f:M \rightarrow [0,\infty)$ with respect to the volume measure on $M$, we have a natural extension of smoothness spaces on $M$ by requiring that $f \circ \exp_x: T_xM \rightarrow \mathbb R$ is a smooth map in any reasonable sense, see for instance \cite{Tri87} for the characterisation of function spaces on a Riemannian manifold.\\

Our main result is that in order to reconstruct $f(x)$ efficiently at a point $x \in \mathbb R^D$ \marc{when $f$ has smoothness $\beta$ and lives on an unknown submanifold of smoothness $\alpha$ and unknown dimension $d<D$}, it is sufficient to consider estimators of the form
\begin{equation} \label{eq: def est}
\widehat f_h(x) = \frac{1}{nh^{\widehat d(x)}} \sum_{i =1}^n K\(\frac{x - X_i}{h}\),~~ x \in \mathbb R^D,
\end{equation}
where $K: \mathbb R^D \rightarrow \mathbb R$ is a certain kernel and $\widehat d(x) = \widehat d(x, X_1,\ldots, X_n)$ is an estimator of the local dimension of the support of $f$ in the vicinity of $x$ based on a scaling estimator as introduced in \cite{FSA07}. We prove in \thmref{upper} that following a classical bias-variance trade-off for the bandwidth $h$, the rate $n^{-\alpha \wedge \beta/(2\alpha\wedge \beta +d)}$ is achievable for pointwise and global loss when the dimension of $M$ is $d$, irrespectively of the ambient dimension $D$. In particular, it is noteworthy that in terms of manifold learning, only the dimension of $M$ needs to be estimated. When $\alpha \geq \beta$, we also have a lower bound (\thmref{lower}) showing that our result is minimax optimal. Moreover, by implementing Lepski's principle \citep{Lep92}, we are able to construct a data driven bandwidth $\widehat h = \widehat h(x,X_1\ldots, X_n)$ that achieves  in Theorem \ref{thm:adapt} the rate $n^{-\alpha \wedge \beta/(2\alpha\wedge \beta +d)}$ up to a logarithmic term --- unavoidable in the case of pointwise loss due to the Lepski-Low phenomenon \citep{L90, Lo92}. When the dimension $d$ is known, the estimator \eqref{eq: def est} has already been investigated in squared-error norm in \cite{OG09} for a fixed manifold $M$ and smoothness $\beta = 2$.\\


A remaining \marc{issue} at this stage is to understand how the regularity of $M$ can affect the minimax rates of convergence for smooth functions, {\it i.e.} when $\alpha \leq \beta$. \marc{We only have a partial answer to that question, when we restrict our attention to the one-dimensional case $d=1$.} When $M$ is known, \cite{Pel05}  studied estimators of the form
\begin{equation} \label{pelest}
\frac{1}{nh^d}\sum_{i = 1}^n \frac{1}{\vartheta_{x}(X_i)}K\(\frac{d_M(x,X_i)}{h}\),
\end{equation}
where $K: \mathbb R \rightarrow \mathbb R$ is a radial kernel, 
$d_M$ is the intrinsic Riemannian distance on $M$ and 
the correction term $\vartheta_x(X_i)$ is the volume density function on $M$ \citep[p. 154]{Bes78} that accounts for the value of the density of the volume measure at $X_i$ in normal coordinates around $x$, taking into account how the submanifold curves around $X_i$. By establishing in \lemref{dim1} that $\vartheta_x$ is constant (and identically equal to one) when $d=1$, we have another estimator by simply learning the geometry of $M$ via its intrinsic distance $d_M$ in \eqref{pelest}.  This can be done by efficiently estimating $d_M$ \marc{in dimension $d=1$}  thanks to the Isomap method as coined by \cite{TDL00}. \marc{Therefore, in the special case when the dimension $d$ of $M$ is known and equal to $1$, we are able to} construct an estimator that achieves in \thmref{onedim} the rate $n^{-\beta/(2\beta+1)}$, therefore establishing that in dimension $d=1$ at least, the regularity of the manifold $M$ does not affect the minimax rate for estimating $f$ even when $M$ is unknown. However, the volume density function $\vartheta_x$ is not constant as soon as $d \geq 2$ and obtaining a global picture in higher dimensions remains \marc{an open and presumably challenging problem}.

\subsection{Organisation of the paper}

In \secref{prelim}, we provide with all the necessary material and notation from classical geometry for the unfamiliar reader. \marc{Section \ref{sec: classical geom} together with the construction of smoothness spaces -- here H\"older spaces on a submanifold in Section \ref{sec: holder spaces}. We elaborate in particular on the reach of a subset of the Euclidean space in \secref{reach} and construct a statistical model for sampling $n$ data from a density $f$ with regularity $\beta$ living on an unknown submanifold $M$ of unknown dimension $d$ and smoothness $\alpha$ in an ambient space of dimension $D$ in \secref{model}. In this setting, we establish in Section \ref{sec:loss} that a reach condition, {\it i.e.} assuming that the reach of $M$ is bounded below, is necessary in order to reconstruct $d$. This is stated precisely in Theorem \ref{thm:reachrisk}.}\\

\marc{We give our main results in Section \ref{sec: dens est} and more specifically in Section \ref{sec: dens est}. When the dimension $d$ and the smoothness parameters $\alpha$ of the unknown manifold $M$ and the smoothness $\beta$ of $f$ are known, Theorem \ref{thm:upper} states the existence of an estimator that achieves the rate $n^{-\alpha\wedge \beta/(2\alpha \wedge \beta+d)}$  in expected pointwise loss,  and Theorem \ref{thm:lower} establishes that a minimax lower bound is $n^{-\beta/(2\beta+d)}$. Theorem \ref{thm:onedim} shows the existence of estimators in dimension $d=1$ that achieve the rate $n^{-\beta/(2\beta+1)}$, which is therefore minimax in that case. Theorem \ref{thm:adapt} states the existence of smoothness and dimension adaptive estimators, when $\alpha,\beta$ and $d$ are unknown. 
Section \ref{sec:ker} elaborates on special kernels upon which the estimators that achieve the aforementioned results are constructed, and their properties with respect to bias and variance analysis. The underlying geometry of $M$ makes the usual orthogonality to non-constant polynomials of a certain degree (the order of the kernel) irrelevant, and a specific construction must be undertaken. Section \ref{sec:uni} focuses on the case of one-dimensional submanifolds $M$ when $d=1$, where we explicitly construct a kernel estimator that achieves the minimax rate of convergence, revisiting the estimator \eqref{pelest} of \cite{Pel05}  and relying on the Isomap algorithm. In Section \ref{sec:smooth}, we implement Lepski's algorithm on the bandwidth of our kernel estimators, following \cite{LMS97}; this achieves smoothness adaptation w.r.t. $\alpha \wedge \beta$. Finally, in Section \ref{sec:dim}, we build an estimator of the dimension $d$ of $M$, following ideas of \cite{FSA07} and that enables us to obtain simultaneous adaptation w.r.t. $\alpha \wedge \beta$ and $d$ by plug-in.}\\

Finally, numerical examples are developed in \secref{simu}:  we elaborate on examples of non-isometric embeddings of the circle and the torus in dimension 1 and 2 and explore in particular rates of convergence on Monte-Carlo simulations, illustrating how effective Lepski's method can be in that context. The proof are delayed until Appendix \ref{proofs}.


\section{Manifold-supported probability distributions} \label{sec:prelim}

\subsection{Some material from \marc{classical} geometry} \label{sec: classical geom}

We recall some basic notions of geometry of submanifolds of the Euclidean space $\mathbb R^D$ for the unfamiliar reader. We borrow material from the classical textbooks \cite{GHL90} and \cite{Lee06}. We endow $\bbR^D$ with its usual Euclidean product and norm, respectively denoted by $\inner{\cdot}{\cdot}$ and $\|\cdot\|$. \clement{We denote by $B(x,r)$ the open ball of $\bbR^D$ of center $x$ and radius $r$, and, for any supspace $H \subset \bbR^D$, $B_{H}(x,r)$ the open ball of in $H$ for the induced norm (namely $B_H(x,r) = H \cap B(x,r)$).}

Classicaly, the smoothness of a submanifold is defined through the regularity of its parametrizations. Because we will need to compare quantitatively the smoothness of manifold within a large class, we will have to pick one canonical way of parametrizing them. For this reason, we consider the \emph{exponential map}; for any smooth submanifold $M \subset\bbR^{D}$ and any $x \in M$, it defines a smooth parametrization
$$
\exp_x : B_{T_x M}(0,\ve) \to M
$$
of $M$ around $x$, provided that $\ve$ is chosen small enough \citep[Cor 2.89 p.86]{GHL90}. \clement{The supremum of all such $\ve$ is called the injectivity radius at $x$ and is denoted $\inj_M(x)$.} When $M$ is a closed subset of $\bbR^D$, the exponential maps are well defined on the whole tangent spaces. This is (one side of) the Hopf-Rinow theorem \citep[Thm 6.13 p.108]{Lee06}.

Given a submanifold $M$ of dimension $d$, we will define the \emph{volume measure} of $M$, denoted by $\mu_M$, as the restriction of the $d$-dimensionnal Hausdorff measure $\cH^d$ to $M$, see \citet[Sec 2.10.2 p.171]{Fed69} for a definition. It can be shown  \citep[Ex D p.102]{EG92} that this definition coincides with the usual one of volume measure of a Riemaniann manifold, namely, if $\psi : M \rightarrow \bbR$ is a continuous fonction with support in $\exp_x(B_{T_x M}(0,\ve))$ for $\ve$ smaller than $\inj_M(x)$, we have
\beq
\mu_M(\psi) = \int_{B_{T_x M}(0,\ve)} \psi \circ \exp_x(v) \sqrt{\det g_{ij}^x (v)} dv, \nonumber
\eeq
with $g_{ij}^x(v) = \inner{d\exp_x(v)[e_i]}{d\exp_x(v)[e_j]}$ and where $(e_1,\dots,e_d)$ is an arbitrary orthonormal basis of $T_x M$. See \citet[Sec 3.H.1 and Sec 3.H.2]{GHL90} for further details on the volume measure. The volume of $M$, denoted by $\vol M$, is simply $\mu_M(\ind)$. It is finite in particular when $M$ is a compact submanifold of $\bbR^D$. 

\clement{
\subsection{H\"older spaces on submanifolds of $\bbR^D$} \label{sec: holder spaces}
Let $M$ be a smooth submanifold of $\bbR^D$. We say that a vector-valued function $f : M \to \bbR^m$ with $m \geq 1$ is $\gamma$-H\"older with $\gamma > 0$ if for all $x \in M$, the map
$$
f \circ \exp_x : B_{T_x M}(0, \rho) \to \bbR^m~~~~\text{where}~~\rho = \inj_M(x)
$$
is $\gamma$-H\"older in the usual sense, namely
\bitem
\item[(i)] $f \circ \exp_x $ is $k = \ceil{\gamma - 1}$-times differentiable;
\item[(ii)] and verifies
$$
\forall v,w \in B_{T_x M}(0,\rho),~~~~ \|d^k (f \circ \exp_x)(v) - d^k (f \circ \exp_x)(w)\|_{\op} \leq R \|v - w\|^\delta
$$
with $\delta = \gamma - k > 0$ and for some $R > 0$. 
\eitem
We will denote by $\cH^\alpha(M,\bbR^m)$ the space of all such functions, and define for $f \in \cH^\alpha(M,\bbR^m)$ the H\"older coefficient
$$
\|f\|_{\cH^\alpha} = \sup_{x \in M} \sup_{\substack{v, w \in B_{T_x M}(0,\rho)\\ \rho = \inj_M(x) }} \frac{\|d^k (f \circ \exp_x)(v) - d^k (f \circ \exp_x)(w)\|_{\op}}{\|v - w\|^\delta}.
$$
\begin{rem} The characterization of the smoothness of a function $f : M \to \bbR^m$ through the exponential maps is a classical way to define functional spaces over Riemannian manifolds, see for instance \cite{Tri87}.
\end{rem}
}

\subsection{The reach of a subset} \label{sec:reach}

One of the main concern when dealing with observations sampled from a geometrically structured probability measure is  to determine the suitable scale at which one should look at the data. Indeed, given finite-sized point cloud in $\bbR^D$, there are infinitely many submanifolds  that interpolate the point cloud, see \figref{pcloud} for an illustration. A popular notion of regularity for a subset of the Euclidean space is the \emph{reach}, introduced by \cite{Fed59}. 
\begin{figure}[ht!]
\centering
\includegraphics[scale=.08]{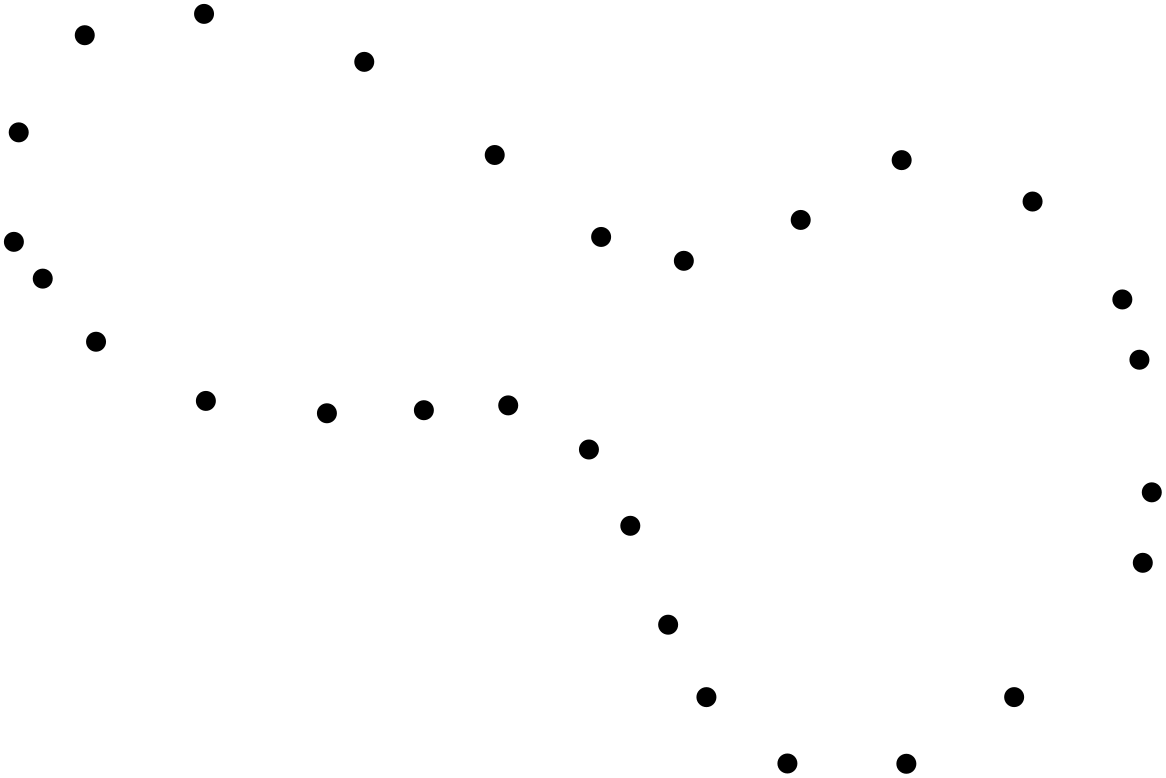} \hspace{5mm}
\includegraphics[scale=.08]{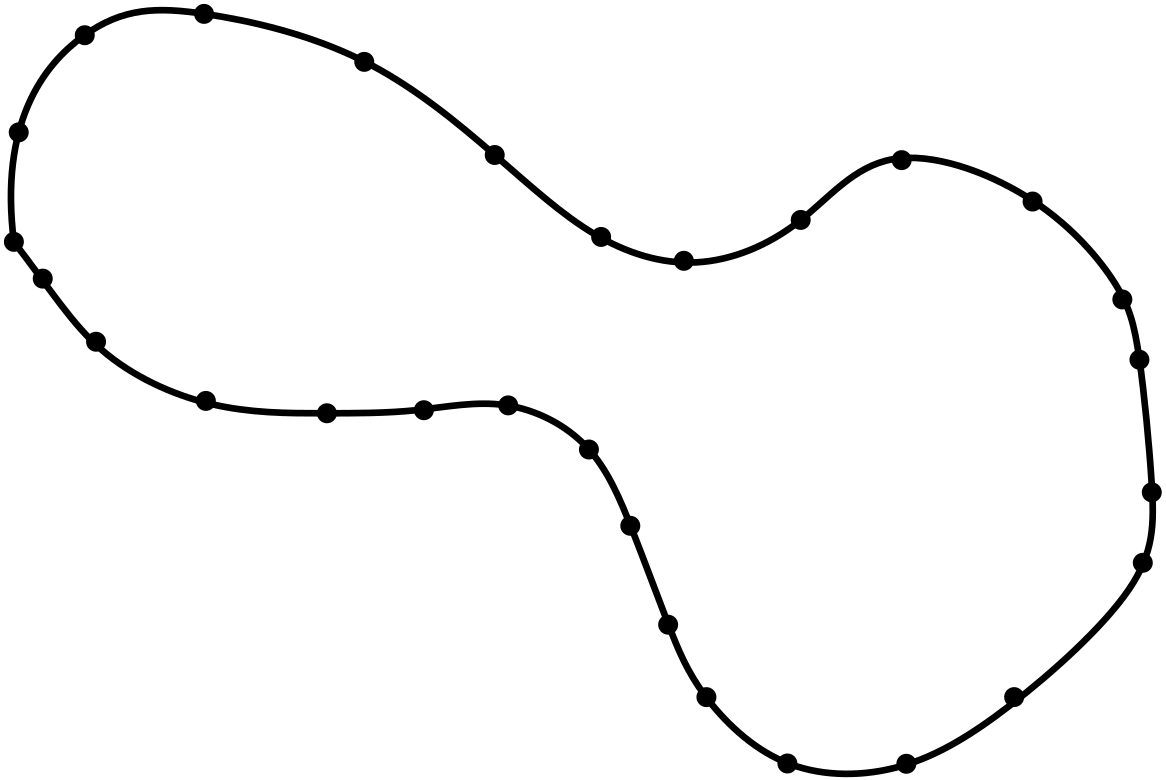} \hspace{5mm}
\includegraphics[scale=.08]{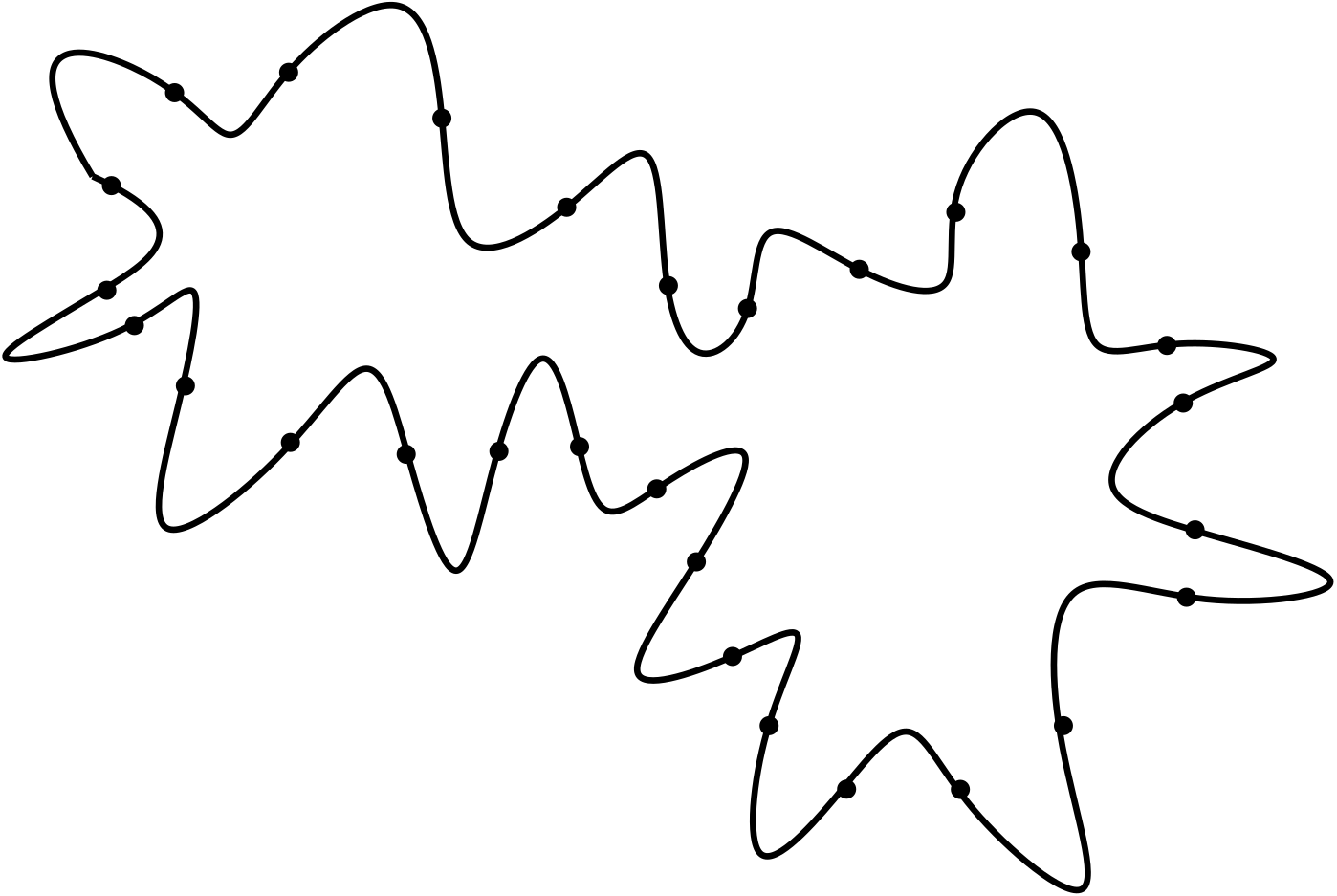}
\captionsetup{justification =  justified, margin = 1.1cm}
\caption{\small An arbitrary points cloud (Left) for $D=2$, and two smooth one-dimensional submanifolds passing through all its points (Middle, Right). A reach condition tends to discard the Right manifold as a likely candidate among all possible submanifolds the point cloud is sampled from.}
\label{fig:pcloud}
\end{figure}
\begin{dfn} Let $K$ be a compact subset of $\bbR^D$. The reach $\tau_K$ of $K$ is the supremum of all $r \geq 0$ such that the orthogonal projection $\pr_K$ on $K$ is well-defined on the $r$-neighbourhood $K^r$ of $K$, namely
\beq
\tau_K = \sup\{r \geq 0 ~|~\forall x \in \bbR^D,~d(x,K) \leq r \Rightarrow \exists ! y \in K,~ d(x,K) = \|x - y\| \}. \nonumber
\eeq
\end{dfn}
When $M$ is a compact submanifold of $\bbR^D$, the reach $\tau_M$ quantifies two geometric invariants: locally, it measures how curved the manifold is, and globally, it measures how close it is to intersect itself (the so-called bottleneck effect). See \figref{reach} for an illustration of the phenomenon. A {\it reach condition}, meaning that the reach is bounded below, is necessary in order to obtain minimax inference results in manifold learning. These include: homology inference \cite{NSW08, Bal12}, curvature \citep{AL17} and reach estimation itself \citep{AKJCMRW19} as well as manifold estimation \cite{Gal12, AL17}. 

\begin{figure}[ht!]
\centering
\includegraphics[scale=.07]{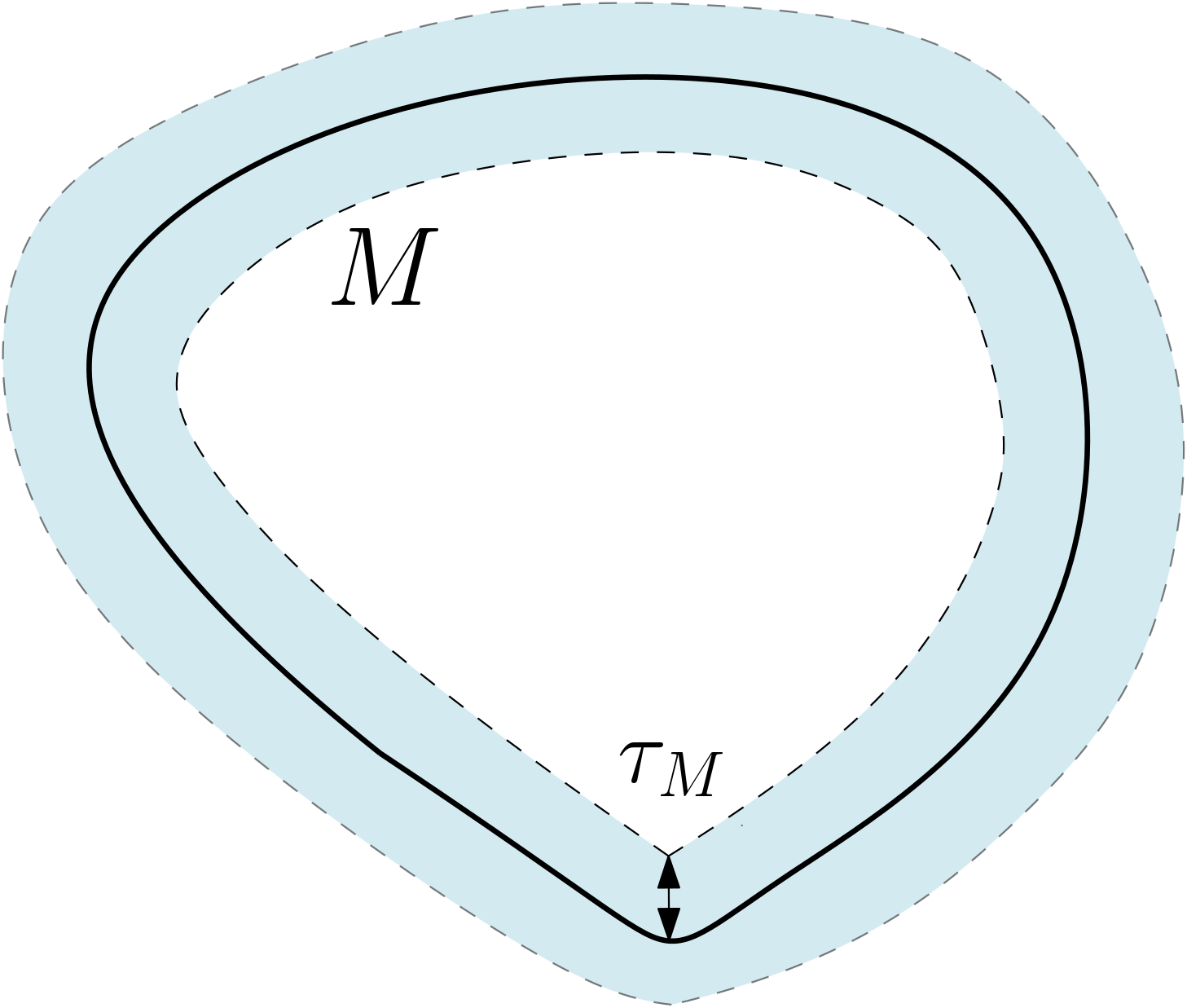} \hspace{1cm}
\includegraphics[scale=.07]{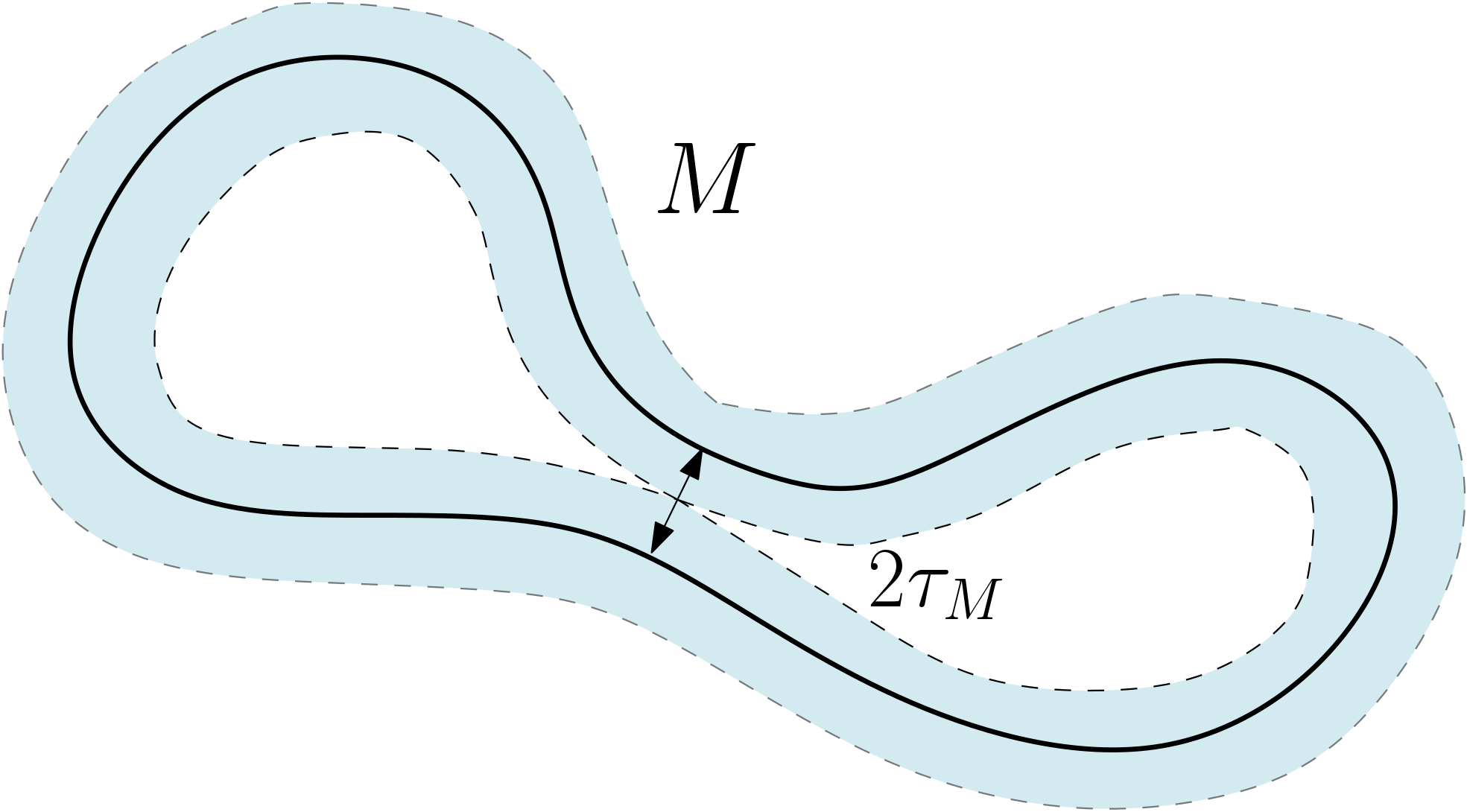}
\captionsetup{justification =  justified, margin = 1.1cm}
\caption{\small For the first manifold $M$ (Left), the value of the reach $\tau_M$ comes from its curvature. For the second one (Right), the reach is equal to $\tau_M$ because it is close to self intersecting (a bottleneck effect). The blue area represents the tubular neighbourhood over which the orthonormal projection on each manifold is well-defined.}
\label{fig:reach}
\end{figure}

\subsection{A statistical model for sampling on a unknown manifold} \label{sec:model}

\clement{
In the following, we fix a point $x \in \bbR^D$ in the ambient space. See \secref{motiv} for a discussion on such a setting.} Our statistical model is characterized by two quantities: the regularity of its support and the regularity of the density defined on this support. The support belongs to a class of submanifolds $M$,
for which we need to fix some kind of \emph{canonical} parametrization. This is what \cite{AL17} propose by asking the support $M$ to admit a local parametrization at all point $y \in M$ by $T_y M$, and that this parametrization is close to being the inverse of the projection over this tangent space. 
We \marc{follow this idea by imposing} a constraint on the exponential map. 
\begin{dfn} \label{def modele}
Let $1 \leq d < D$ be integers and \marc{$\tau > 0$}. We let $\cdt$ define the set of submanifolds $M$ of $\bbR^D$ \clement{that contains $x$} and satisfying the following properties:
\bitem
\item[(i)] (Dimension) $M$ is a smooth submanifold of dimension $d$ without boundaries;
\item[(ii)] (Compactness) $M$ is compact;
\item[(iii)] (Reach condition) We have $\tau_M \geq \tau$.
\eitem
For $\alpha \geq 0$ and  $L > 0$, we define $\cdatl$ as the set of $M \in  \cdt$ that fulfill the additional condition:
\bitem  \addtocounter{enumi}{3}
\item[(iv)] \clement{The inclusion map $\iota_M : M \hookrightarrow \bbR^D$ is $(\alpha+1)$-H\"older with $\|\iota_M\|_{\cH^{\alpha+1}} \leq L$.}
\eitem
\end{dfn}
\begin{rem} \label{rem:local}\clement{The definitions above endow our model with \emph{global} constraints, even though most of them can be stated in a \emph{local} fashion, with properties of the support holding in a neighborhood of our candidate point $x\in \bbR^D$. This meets two expectations: 
\bitem
\item[-] staying close to the existing manifold setting in statistics, like in \cite{AL17};
\item[-] allowing for further developments, like estimation in global losses, such as $L^p$-norms, Wasserstein norms, or the sup-norm \cite{WW20}.
\eitem}
\end{rem}
\clement{The reach condition \marc{$\tau_M \geq \tau > 0$} in $(iii)$ is essential in estimating consistently a density at a point in our setting, as shown in \thmref{reachrisk} in \secref{loss}.
Furthermore, a reach constraint enables the use of a number of interesting geometric results.
\begin{prp} \label{prp:injexp} Let $M$ be a compact smooth submanifold of $\bbR^D$ with $\tau_M \geq \tau$. Then the injectivity radius $\inj_M$ is everywhere greater than $\pi\tau$. 
\end{prp}
This result is a corollary of \citet[Thm 1.3]{AB06}, as explained in \citet[Lem A.1]{AL17}. Pick $M \in \cdt$. For any $z \in M$, the map $v \mapsto \exp_z(v) - x$ is bounded on $B_{T_z M}(0,\pi\tau)$ by $\pi \tau$, since for any $v \in B_{T_z M}(0,\pi\tau)$, we have $\|\exp_z(v) - x \| \leq d_M(\exp_z(v),x) =  \|v\|$, where $d_M$ is the intrinsic distance on $M$. This uniform bound along with the H\"older condition $(iv)$ allows one to obtain a uniform bound on the first derivatives of the exponential map
\begin{lem} For $M \in \cdatl$, any $z \in M$, and any $1 \leq j \leq \ceil{\alpha}$, we have
\beq
\sup_{v \in B_{T_z M}(0,\pi\tau/2)} \|d^j \exp_z(v)\|_{\op} \leq L_j, \nonumber
\eeq
with $L_j$ depending on $d$, $\tau$, $L$ and $\alpha$ only. 
\end{lem}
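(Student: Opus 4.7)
The plan is to combine two inputs: an \emph{a priori} uniform bound on $\exp_z - z$ coming from the reach-induced lower bound on the injectivity radius, and the H\"older control on $d^{\ceil{\alpha}}\exp_z$ supplied by item (iv) of Definition \ref{def modele}. By Proposition \ref{prp:injexp}, $\inj_M(z) \geq \pi\tau$, so $\exp_z$ is well-defined on $B_{T_zM}(0,\pi\tau)$. Unfolding the definition of $\cH^{\alpha+1}$ applied to $\iota_M$, condition (iv) reads: $\exp_z : B_{T_zM}(0,\pi\tau) \to \bbR^D$ is of class $C^{k}$ with $k = \ceil{\alpha}$, and its $k$-th differential satisfies
$$
\bigl\| d^k\exp_z(v) - d^k\exp_z(w)\bigr\|_{\op} \leq L\, \|v-w\|^\delta, \qquad \delta := \alpha+1-k \in (0,1].
$$
As noted just before the statement of the lemma, $\|\exp_z(v)-z\| \leq \|v\| \leq \pi\tau$ on the same ball, so $\exp_z - z$ is uniformly bounded by $\pi\tau$ there.

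The strategy is then to invoke a Kolmogorov--Landau-type interior estimate: if $F : B(0,R) \to \bbR^D$ on a ball of a $d$-dimensional inner product space satisfies $\|F-F(0)\|_{L^\infty(B(0,R))} \leq A$ and $[d^k F]_{C^{0,\delta}(B(0,R))} \leq L_0$, then for every $1 \leq j \leq k$,
\begin{equation} \label{eq:Landau-interp}
\|d^j F\|_{L^\infty(B(0,R/2))} \leq C_{j,k,d,\delta}\, \bigl(A\, R^{-j} + L_0\, R^{k+\delta-j}\bigr).
\end{equation}
Applied to $F = \exp_z$ with $R = A = \pi\tau$ and $L_0 = L$, this yields the claim of the lemma, with $L_j = C_{j,k,d,\delta}\bigl( (\pi\tau)^{1-j} + L\,(\pi\tau)^{\alpha+1-j}\bigr)$, a quantity depending only on $d$, $\tau$, $L$ and $\alpha$ (since $k$ and $\delta$ are determined by $\alpha$).

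The only non-trivial step is the interpolation inequality \eqref{eq:Landau-interp}, which I would derive by Taylor expansion with H\"older remainder. Fix $v_0 \in B(0,R/2)$ and a unit vector $u$ in the underlying Euclidean space, and set $g(t) := F(v_0+tu) - F(v_0)$ on $|t|\leq R/2$; then $g(t) = \sum_{j=1}^k \tfrac{t^j}{j!}\, d^j F(v_0)[u^{(j)}] + \rho(t)$, with $|\rho(t)| \leq C_k L_0 |t|^{k+\delta}$ obtained from the integral form of Taylor's remainder combined with the H\"older bound on $d^kF$. Since $|g(t)| \leq 2A$, the degree-$k$ polynomial $P(t) = g(t) - \rho(t)$ is bounded on $[-R/2, R/2]$ by $2A + C_k L_0 R^{k+\delta}$. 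Classical bounds on coefficients of polynomials bounded on an interval (via, \emph{e.g.}, Lagrange interpolation at well-separated nodes, Chebyshev expansions, or a Vandermonde inversion) then give $|d^j F(v_0)[u^{(j)}]| \leq C'_{k,j}\bigl( A R^{-j} + L_0 R^{k+\delta-j}\bigr)$, uniformly in the direction $u$. Polarization of the symmetric multilinear form $d^j F(v_0)$ upgrades this diagonal bound to the full operator norm, with a multiplicative constant depending only on $j$ and $d$. The main (mild) technical obstacle is this polynomial coefficient bound, which is classical and has explicit constants.
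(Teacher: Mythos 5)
Your proof is correct and follows essentially the same route as the paper's. The paper derives this lemma from its technical Lemma \ref{lem:tech}, whose proof is precisely the Taylor-expansion-with-H\"older-remainder followed by Vandermonde-matrix inversion that you sketch; the only cosmetic differences are that the paper states the resulting interpolation estimate in the balanced form $C\,b^{1-j/\gamma}A^{j/\gamma}$ rather than your two-term form $C\bigl(AR^{-j}+L_0 R^{k+\delta-j}\bigr)$, and applies it to scalar projections $\langle u,\exp_z(\cdot)-z\rangle$ rather than to the vector-valued map $\exp_z$ directly.
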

See \lemref{tech} in the appendix for further details on the proof. In the light of this result, the model of Definition \ref{def modele} is thus quite close to the one proposed by \cite{AL17}.} 
\marc{We are ready to define the class of density functions that we study, built upon submanifolds in the class $\cdt$.}

\clement{
\begin{dfn} \label{def: def model stat}
 \marc{Let $1 \leq d \leq D-1$, $\alpha \geq 0$, $\beta \geq 0$,  $\tau > 0$, $L > 0$, $R > 0$ and $0 \leq f_{\min} < f_{\max}$. We define $\Sigma^d_{\alpha,\beta}(\tau,L,f_{\min}, f_{\max} ,R)$, or $\sdab$ for short, as the set of probability measures $P$ on $\bbR^D$} (endowed with its Borel $\sigma$-field) such that
\bitem
\item[(i)] There exists $M_P\in \cdatl$ such that $\supp P = M_P$;
\item[(ii)] There exists a version of the Radon-Nikodym derivative $\frac{dP}{d\mu_{M_P}}$, denoted by $f_P$, that belongs to $\cH^{\beta}(M_P,\bbR)$;
\item[(iii)] This version satisfies $f_{\min} \leq f_P \leq f_{\max}$ and $\|f_P\|_{\mathcal H^{\beta}} \leq R$. 
\eitem
\end{dfn}
Some remarks: {\bf 1)} \marc{By construction, the support of $P \in \Sigma^d_{\alpha,\beta}(\tau,L,f_{\min}, f_{\max} ,R)$ all contains the candidate point $x$, see Definition \ref{def modele}}. {\bf 2)} Condition (i) discards the possibility that $f_P$ is zero on non-null subset of $M$; in particular $f_P$ is non zero around $x$ (but can be zero at $x$ nonetheless). This ensures that $x$ does not lie too far from the data. \marc{An alternate definition is to impose a condition like $P \ll \mu_M$. This leads to the same results in the next sections, but with a slight ambiguity in the choice of $M$}. {\bf 3)} The parameters in subscript or superscript $(d,\alpha,\beta)$ control the rate of convergence of the estimation, while the parameters $(\tau,L,f_{\min}, f_{\max} ,R)$ control the pre-factor in the rates of convergence. For notational simplicity, we sometimes omit them when no confusion can be made.}
\clement{
\subsection{Choice of a loss function and the reach assumption}\label{sec:loss}
For $P \in \sdab$ and a $n$-sample $(X_1,\dots,X_n)$ drawn from $P$, our goal is to recover the value of $f_P(x)$ thanks to an estimator $\wh f(x)$ built on top of the data $(X_1,\dots,X_n)$. We measure the accuracy of estimation by the maximal expected risk or order $p$, for $p \geq 1$, defined by
$$
 \sup_{P \in \sdab} \bbE_{P^{\otimes n}}[|\wh f(x) - f_P(x)|^p]^{1/p}
$$
\marc{We look for an estimator with the smallest possible maximal risk as the number of observations $n$ goes to $\infty$.
We first show that if we let $\tau = 0$, {\it i.e.} if we do not impose a reach condition, then it is impossible to estimate $f(x)$ consistently as $n \to \infty$ for any estimator, thus establishing that the reach assumption $\tau > 0$ unavoidable.}
\begin{thm}\label{thm:reachrisk} \marc{In the setting of Definition \ref{def: def model stat}, if we let $\tau = 0$, the following lower bound holds}
$$
\inf_{\wh f(x)} \sup_{P \in \sdab}\bbE_{P^{\otimes n}} [|\wh f(x)- f_P(x)|^p]^{1/p} \geq \frac12(f_{\max} - f_{\min}) > 0,
$$
where the infimum is taken over all estimators $\wh f(x)$ of $f(x)$.
\end{thm}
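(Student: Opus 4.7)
The plan is to apply Le Cam's two-point method, exploiting the absence of a reach condition to construct, for each $\epsilon>0$, two measures $P_0^\epsilon, P_1^\epsilon \in \sdab$ (with $\tau=0$) satisfying $f_{P_0^\epsilon}(x)=f_{\max}$, $f_{P_1^\epsilon}(x) = f_{\min}$, and $\|P_0^\epsilon - P_1^\epsilon\|_{\mathrm{TV}} \to 0$ as $\epsilon \to 0$. The rest then proceeds by a standard Jensen/truncation argument and a pass to the limit $\epsilon\to 0$ at fixed $n$.

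For the construction, I would fix a compact $d$-submanifold $M^\star$ satisfying all the conditions of $\cdatl$, staying at positive distance from $x$, carrying a smooth H\"older reference probability $\nu$ with density in the interior of $[f_{\min},f_{\max}]$. For each $i \in \{0,1\}$ I attach disjointly to $M^\star$ a small compact $d$-submanifold $S_i^\epsilon \ni x$ with $\mathrm{vol}(S_i^\epsilon) \leq \epsilon$ and $\|\iota_{S_i^\epsilon}\|_{\cH^{\alpha+1}} \leq L$. Put a constant density $\theta_i \in \{f_{\max}, f_{\min}\}$ on $S_i^\epsilon$, and adjust the density on $M^\star$ by an $O(\epsilon)$ factor to restore total mass $1$ (which is possible since $\nu$ was chosen in the interior). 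The resulting $P_i^\epsilon$ lies in $\sdab$ with $f_{P_i^\epsilon}(x)=\theta_i$, and since the two measures differ only on the thorns (of total mass $O(\epsilon)$) and by an $O(\epsilon)$ perturbation on $M^\star$, direct computation gives $\|P_0^\epsilon - P_1^\epsilon\|_{\mathrm{TV}} = O(\epsilon)$.

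Given such a pair, the standard Le Cam/Jensen argument applies. Any estimator $\widehat f(x)$ may be truncated to $[f_{\min},f_{\max}]$ without increasing the $L^p$ risk; tensorization of total variation then yields
\[
\bigl|\bbE_{P_0^{\epsilon,\otimes n}}[\widehat f(x)]-\bbE_{P_1^{\epsilon,\otimes n}}[\widehat f(x)]\bigr| \leq (f_{\max}-f_{\min})\cdot n \cdot \|P_0^\epsilon-P_1^\epsilon\|_{\mathrm{TV}} = O(n\epsilon).
\]
Combining Jensen's inequality $\bbE|Y|^p \geq |\bbE Y|^p$ with the elementary fact that $\max(|a-f_{\max}|, |a-f_{\min}|) \geq \frac{1}{2}(f_{\max}-f_{\min})$ for any real $a$, applied to $a = \bbE_{P_i^{\epsilon,\otimes n}}[\widehat f(x)]$ which is $O(n\epsilon)$-close to a common value across $i \in \{0,1\}$, one obtains
\[
\max_{i\in\{0,1\}}\bbE_{P_i^{\epsilon,\otimes n}}\bigl[|\widehat f(x)-f_{P_i^\epsilon}(x)|^p\bigr]^{1/p} \geq \tfrac{1}{2}(f_{\max}-f_{\min})-O(n\epsilon).
\]
Since the left-hand side is bounded from below by this quantity for every $\epsilon>0$ and $n$ is fixed, letting $\epsilon\to 0$ produces the claimed $\tfrac{1}{2}(f_{\max}-f_{\min})$ bound.

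The main obstacle is the construction of the thorn $S_i^\epsilon$ with simultaneously vanishing volume and uniformly bounded $\cH^{\alpha+1}$ norm of the inclusion $\iota$: a naive isotropic thorn (a $d$-sphere of radius $r$) yields $\|\iota_{S_i^\epsilon}\|_{\cH^{\alpha+1}} \sim r^{-\alpha}$, which works for $\alpha=0$ but diverges for $\alpha>0$. For $\alpha>0$ this forces a more delicate construction, typically anisotropic (shrinking one extrinsic direction while keeping the others controlled), or an appeal to the ambiguity noted in the second remark after Definition~\ref{def: def model stat}: under the alternative formulation $P \ll \mu_{M_P}$, the support of $P$ can be kept on a fixed well-behaved reference manifold while the candidate $M_P$ alone is varied, moving the reference measure $\mu_{M_P}$ used to define $f_P(x)$ without altering the ambient distribution one samples from, so that the TV difference vanishes exactly rather than merely tending to zero.
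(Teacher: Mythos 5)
Your high-level strategy — a Le Cam two-point argument, where the absence of a reach constraint is exploited to build two laws in $\sdab$ with $f_{P_0}(x)=f_{\max}$, $f_{P_1}(x)=f_{\min}$, and total variation distance tending to zero, followed by a Jensen/truncation step in place of the usual Le Cam inequality — is correct and matches the paper's strategy. The Jensen/truncation variant is a perfectly valid substitute for the Le Cam bound \eqref{yu} that the paper invokes.

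However, your construction has the gap you yourself flagged, and the proposed repairs do not close it. A compact $d$-dimensional ``thorn'' $S^\ve$ through $x$ with $\vol(S^\ve)\to0$ must have diameter $\to 0$, so the second fundamental form (hence $d^2\exp$, hence $\|\iota_{S^\ve}\|_{\mathcal H^{\alpha+1}}$ for any $\alpha>0$) necessarily blows up; anisotropic flattening trades one direction's curvature for another's and does not escape this, and the appeal to the $P\ll\mu_M$ remark is not available since Definition~\ref{def: def model stat} forces $M_P=\supp P$, so the candidate manifold cannot be varied while keeping the sampling distribution fixed. The paper's construction is genuinely different and sidesteps this obstruction: it does not shrink any submanifold. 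It places two parallel flat $d$-disks at heights $\pm\delta$ (joined away from $x$), so that the reach violation is a \emph{bottleneck}, not a curvature blow-up; one then pushes one disk or the other down/up by a small localized bump of amplitude $\delta_h$ on a spatial scale $h$, so that $x$ is hit either with density $f_{\max}$ (from the top disk) or $f_{\min}$ (from the bottom disk). The H\"older bound $(iv)$ is preserved because, for each fixed $h$, the bump amplitude $\delta_h$ can be taken arbitrarily small (so all derivatives of the deformation stay within budget), while $\tv(P_h^+,P_h^-)\asymp h^d\to 0$. In short: your plan is right, but the thorn construction fails for $\alpha>0$, and the working fix is a bottleneck-plus-local-deformation construction rather than a shrinking submanifold.
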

The proof is given in  \appref{loss}. This result is in line with a reach condition $\tau > 0$, \marc{a customary necessary condition in a minimax reconstruction in geometric inference, when the manifold is unknown, see \citep{NSW08, Gal12, Bal12, KRW16, AL17} and the references therein}.}

\section{Density estimation at a fixed point} \label{sec:main}

\clement{Recall that we fix the point $x \in \bbR^D$ where we wish to estimate $f$. 
Throughout the section, the symbols $\lesssim$ and $\gtrsim$ denote inequalities up to a constant that, unless specified otherwise, depend on the parameters $d,\alpha,\beta, \tau,L,f_{\min}, f_{\max} ,R$ and $p$. The expression \emph{for $n$ large enough} means for $n$ bigger than a constant that depends on the same parameters.}

\subsection{Main results} \label{sec: dens est}
\clement{
Let $D \geq 2$, $\tau > 0$, $L > 0$, $R > 0$, $0 \leq f_{\min} < f_{\max}$ and $p \geq 1$. Recall that we write $\sdab$ for short for $\sdab(\tau,L,f_{\min},f_{\max},R)$ as defined in Definition \ref{def: def model stat}. The main results of this section are the following
\begin{thm}[Upper bound] \label{thm:upper} \marc{For any $1 \leq d \leq D-1$, $\alpha \geq 0$ and $\beta \geq 0$}, there exists an estimator $\wh f(x)$ \marc{-- explicitly constructed in Section \ref{sec:ker} below --} depending on $\alpha,\beta$ and $d$, such that,  for $n$ large enough,
\marc{
$$
\sup_{P \in  \sdab} \bbE_{P^{\otimes n}}[|\wh f(x) - f_P(x)|^p]^{1/p} \lesssim n^{-\alpha \wedge \beta/(2\alpha \wedge \beta + d)}.
$$
} 
\end{thm}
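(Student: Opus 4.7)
The proof is a bias-variance analysis for kernel density estimation, adapted to the hidden manifold geometry. I would split the risk according to the event $\Omega_n = \{\wh d(x) = d\}$ on which the exponent in the estimator matches the true intrinsic dimension. From the analysis of the scaling-type local dimension estimator in Section \ref{sec:dim} (in the spirit of \cite{FSA07}), one expects a bound of the form $\bbP_{P^{\otimes n}}(\Omega_n^c) \leq c_1 \exp(-c_2 n^a)$ uniformly over $\sdab$, for constants $c_1, c_2, a > 0$. Since $\wh d(x) \in \{1,\dots,D\}$ and $|\wh f_h(x)|$ admits a crude polynomial-in-$n$ deterministic bound when $h$ is a negative power of $n$, the contribution of $\Omega_n^c$ to the $L^p$ risk is negligible, and it suffices to analyse $\bbE_{P^{\otimes n}}[|\wh f_h(x) - f_P(x)|^p \ind_{\Omega_n}]^{1/p}$.

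On $\Omega_n$, the estimator reduces to the standard empirical average $\wh f_h(x) = (nh^d)^{-1} \sum_{i=1}^n K((x - X_i)/h)$. A reach-driven volume estimate $\mu_{M_P}(B(x, Ch)) \lesssim h^d$ (for $h$ below a constant multiple of $\tau$), together with the upper bound $f_P \leq f_{\max}$, shows that, for a compactly supported bounded kernel $K$, the summands $K((x-X_i)/h)$ have variance of order $h^d$ and sup norm of order one. Rosenthal's inequality for sums of iid centered variables then yields
$$
\bbE_{P^{\otimes n}}\big|\wh f_h(x) - \bbE \wh f_h(x)\big|^p \lesssim (nh^d)^{-p/2}
$$
whenever $nh^d \gtrsim 1$, which is the standard stochastic rate.

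The bias $\bbE \wh f_h(x) - f_P(x)$ is the core of the argument. Using the diffeomorphism $\exp_x : B_{T_x M_P}(0,\pi\tau) \to M_P$ granted by the reach condition and Proposition \ref{prp:injexp}, write
$$
\bbE \wh f_h(x) = \frac{1}{h^d} \int_{T_x M_P} K\!\left(\frac{x - \exp_x(v)}{h}\right) f_P(\exp_x(v))\, \sqrt{\det g^x_{ij}(v)}\, dv.
$$
The $(\alpha+1)$-H\"older control on $\iota_{M_P}$ in Definition \ref{def modele}$(iv)$ yields $\exp_x(v) - x = v + \phi_x(v)$ with $\phi_x(v) = O(\|v\|^2)$ and an $(\alpha+1)$-order Taylor remainder, while the volume density $\sqrt{\det g^x_{ij}(v)}$ admits an analogous expansion up to order $\lceil \alpha \rceil$. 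Rescaling $v = hu$ and Taylor expanding $f_P \circ \exp_x$ at $0$ up to order $\lceil \beta - 1 \rceil$ then decomposes the bias into a piece of order $h^{\beta}$, driven by the smoothness of $f_P$, and a piece of order $h^{\alpha}$, driven by the geometry of $M_P$. Both pieces are made to cancel at the correct rate by the specific kernel construction of Section \ref{sec:ker}, whose pullback along $\exp_x$ has vanishing intrinsic polynomial moments up to order $\lceil \alpha \wedge \beta \rceil$. Balancing bias $h^{\alpha \wedge \beta}$ and standard deviation $(nh^d)^{-1/2}$ yields the choice $h = n^{-1/(2(\alpha \wedge \beta)+d)}$ and the announced rate.

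The main obstacle is the bias step: the kernel $K$ is defined on the ambient space $\bbR^D$, whereas the relevant Taylor expansions take place in the $d$-dimensional tangent space $T_x M_P$. The usual moment-vanishing conditions for polynomials on $\bbR^D$ do not directly produce the cancellations needed in intrinsic coordinates, and the discrepancy between the ambient chord $\|v + \phi_x(v)\|$ and the geodesic length $\|v\|$ must be absorbed uniformly over all candidate manifolds $M_P \in \cdatl$. Overcoming this is precisely what the specialized kernel of Section \ref{sec:ker} is designed for; once it is available, the rest of the argument follows a standard pattern and is uniform in $P \in \sdab$.
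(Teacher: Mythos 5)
Your outline has the right overall shape — bias-variance decomposition, Bernstein/Rosenthal-type control of the stochastic term, expansion of the bias via the exponential map — but two points need correction, one cosmetic and one substantive.

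The cosmetic one: in \thmref{upper} the estimator is allowed to depend on $d$ (the theorem statement says ``depending on $\alpha,\beta$ and $d$''), so there is no dimension-estimation event $\Omega_n = \{\wh d(x)=d\}$ to condition on. Your entire first paragraph is part of the proof of \thmref{adapt} (via \prpref{dimadapt} and \prpref{dimest}), not of \thmref{upper}. Including it here is harmless but obscures the logic. The Rosenthal bound you use for the variance is a legitimate substitute for the paper's route, which instead integrates a Bernstein tail (\prpref{sd}, via \lemref{var} and \eqref{bern}); both give the rate $(nh^d)^{-1/2}$.

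The substantive gap is in your description of how the bias cancellations are achieved. You write that the kernel is chosen so that its ``pullback along $\exp_x$ has vanishing intrinsic polynomial moments up to order $\lceil \alpha\wedge\beta\rceil$.'' This is not what the paper does, and it cannot be what anyone does: the pullback $K\circ\exp_x$ depends on the unknown manifold $M_P$, so a kernel fixed in advance on $\bbR^D$ cannot have vanishing intrinsic moments uniformly over the class $\cdatl$. The paper explicitly flags that ``defining a kernel through orthogonality relations with respect to a family of polynomials is not sufficient in our framework'' precisely because the Euclidean distance is only a second-order approximation of the geodesic distance. The actual mechanism is different and is the key idea you are missing. \lemref{expans} establishes that the expectation expands as
$$
f_h(P,x) = f(x) + \sum_{j=1}^{k} h^j G_j(P,x) + R_h(P,x), \qquad |G_j(P,x)| \lesssim 1,\ \ |R_h(P,x)|\lesssim h^{\alpha\wedge\beta},
$$
where the coefficients $G_j$ are uncontrolled functions of the unknown geometry (they involve derivatives of $\exp_x$, of $\zeta = \sqrt{\det g^x}$, and of $f\circ\exp_x$). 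The kernel $K^{(\ell)}$ is then built by a Richardson-extrapolation recursion, $K^{(\ell+1)}(z) = 2^{1+d/\ell} K^{(\ell)}(2^{1/\ell} z) - K^{(\ell)}(z)$, which exploits only the $h^j$ scaling of each term: \prpref{bias} shows by induction that each step of the recursion kills the lowest surviving $h^j$ term regardless of the value of $G_j(P,x)$, so the bias is $O(h^{\alpha\wedge\beta})$ uniformly over $\sdab$ without ever needing $G_j = 0$. This multi-scale cancellation in $h$, not moment-orthogonality in the intrinsic variable, is what makes the argument work over an unknown manifold class.
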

}
\clement{The estimator of \thmref{upper} is a kernel density estimator that depends on $\alpha,\beta$ and $d$ through the choice of the kernel and its order (in a certain sense specified below), together with its bandwidth. Its analysis si given in \secref{ker}. The estimator is indeed optimal in a minimax sense, as soon as $\alpha \geq \beta$  
}
\clement{
\begin{thm}[Lower bound] \label{thm:lower} \marc{Let $1 \leq d \leq D-1$, $\alpha \geq 0$ and $\beta \geq 0$}. If $L$ and $f_{\max}$ are large enough and if $f_{\min}$ is small enough (depending on $\tau$), then
$$
\liminf_{n \to \infty} n^{\beta/(2\beta+d)} \inf_{\wh f(x)} \sup_{P \in \sdab} \bbE_{P^{\otimes n}}[|\wh f(x) - f_P(x)|^p]^{1/p} \geq C_* > 0$$
where $C_*$ \marc{only} depends on $\tau$ and $R$.
\end{thm}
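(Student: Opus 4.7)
The plan is to apply Le Cam's classical two-point method, exploiting that the target rate $n^{-\beta/(2\beta+d)}$ depends only on the intrinsic dimension $d$ and the density smoothness $\beta$. Accordingly, it suffices to work on a single, fixed submanifold $M_0 \in \cdatl$ containing $x$ --- for concreteness a round $d$-sphere of radius $r \geq \tau$, positioned so that $x \in M_0$. Its reach equals $r \geq \tau$, and the H\"older modulus $\|\iota_{M_0}\|_{\cH^{\alpha+1}}$ depends only on $(d,r,\alpha)$, hence is $\leq L$ whenever $L$ is taken large enough. With $M_0$ frozen, the manifold estimation aspect disappears and we reduce to a standard density lower bound on a compact $d$-dimensional Riemannian manifold of known geometry.

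Fix a smooth bump $\varphi : \bbR^d \to \bbR_+$ with $\supp \varphi \subset B(0,1)$ and $\varphi(0) = 1$. For a bandwidth $h \in (0, \pi\tau/2]$ to be tuned, set on $M_0$
$$
\psi_h(y) \;=\; h^\beta\, \varphi\!\bigl(\exp_x^{-1}(y)/h\bigr)\, \mathbf{1}\{y \in \exp_x(B_{T_xM_0}(0,h))\},
$$
take $f_0 \equiv 1/\vol M_0$, and let $f_1 = (f_0 + c\,\psi_h)/Z_h$, where $c > 0$ is a small constant and $Z_h$ is the normalising factor. A change of variable in the chart $\exp_x$ together with the uniform bounds on $\|d^j \exp_z\|_{\op}$ from \lemref{tech} gives $Z_h = 1 + O(h^{\beta+d})$ and $\|\psi_h\|_{\cH^\beta(M_0)} \lesssim 1$; so $f_0, f_1 \in \cH^\beta(M_0, \bbR)$, both satisfy $\|f_i\|_{\cH^\beta} \leq R$ and $f_{\min} \leq f_i \leq f_{\max}$ provided $R, f_{\max}$ are large and $f_{\min}$ is small. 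Since $\psi_h(x) = h^\beta$, one also gets $|f_1(x) - f_0(x)| \geq \tfrac{c}{2}\, h^\beta$ for $h$ small.

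Let $P_i$ denote the probability measure with density $f_i$ with respect to $\mu_{M_0}$; both lie in $\sdab$. Since $f_0$ is a positive constant and $\psi_h$ is supported on a set of $\mu_{M_0}$-mass $O(h^d)$,
$$
\chi^2(P_1, P_0) \;=\; \int_{M_0} \frac{(f_1 - f_0)^2}{f_0}\, d\mu_{M_0} \;\lesssim\; c^2\, h^{2\beta + d}.
$$
Tensorising and calibrating $h = h_n := c'\, n^{-1/(2\beta+d)}$ with $c'$ small keeps $\chi^2(P_1^{\otimes n}, P_0^{\otimes n}) \lesssim c^2$, so the total variation between $P_0^{\otimes n}$ and $P_1^{\otimes n}$ stays bounded away from $1$. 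Le Cam's two-point inequality then yields
$$
\inf_{\wh f(x)} \sup_{P \in \sdab} \bbE_{P^{\otimes n}}\!\bigl[|\wh f(x) - f_P(x)|^p\bigr]^{1/p} \;\gtrsim\; h_n^\beta \;\asymp\; n^{-\beta/(2\beta+d)},
$$
which is the announced bound.

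The main technical hurdle is controlling the \emph{global} H\"older seminorm $\|\psi_h\|_{\cH^\beta(M_0)}$: the definition of \secref{holder spaces} requires bounding $\psi_h \circ \exp_z$ for \emph{every} base point $z \in M_0$, not only $z = x$. For $z$ outside a fixed neighbourhood of $x$, the pullback $\psi_h \circ \exp_z$ vanishes when $h$ is small; for $z$ close to $x$, one writes $\psi_h \circ \exp_z = h^\beta \varphi\bigl(h^{-1} \exp_x^{-1} \circ \exp_z\bigr)$ and applies Fa\`a di Bruno to express its derivatives in terms of those of $\Phi_z := \exp_x^{-1} \circ \exp_z$, which are controlled uniformly in $z$ thanks to \lemref{tech} and the injectivity-radius bound of \prpref{injexp}. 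Once this bookkeeping is carried out, the $\chi^2$ estimate and Le Cam's inequality conclude the argument.
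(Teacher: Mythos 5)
Your proposal follows essentially the same route as the paper: a two-point Le Cam argument on a single fixed submanifold, with a uniform base density perturbed by a localized bump of height $h^\beta$ supported in a ball of radius $h$ around $x$ in normal coordinates, and bandwidth calibrated to $h \asymp n^{-1/(2\beta+d)}$. The one noteworthy difference is the choice of test manifold: the paper takes $M$ to contain a \emph{flat} Euclidean disk through $x$, so $\exp_x$ is an isometry on the bump's support, the perturbation is literally a Euclidean bump, and the global H\"older verification that you correctly flag as the main technical burden (requiring Fa\`a di Bruno along $\exp_x^{-1}\circ\exp_z$) disappears almost entirely; a round sphere forces that bookkeeping. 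The paper also takes a zero-mean bump $G$ so that no renormalization $Z_h$ is needed, and uses Hellinger instead of $\chi^2$, but these are interchangeable choices and your $\chi^2$ tensorization is handled correctly.
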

}
\marc{See \appref{prooflower} for a proof. The rates from \thmref{upper} and \thmref{lower} agree, provided the underlying manifold $M$ is regular enough, namely that $\alpha \geq \beta$. This probably covers most cases of interest in practice. However, when $\alpha < \beta$ the question of optimality remains. We investigate in \secref{uni} below the simpler case $d=1$ and show that it is then possible to achieve the rate $n^{-\beta/(2\beta+1)}$, at the extra cost of learning the geometry of $M$ in a specific sense. 
}
\clement{
\begin{thm}[One-dimensional case]  \label{thm:onedim} \marc{Let $d=1$ and $\beta \geq 0$}. Assume that $f_{\min} > 0$. Then there exists an estimator $\hat f^{\od}(x)$ \marc{-- explicitly constructed in Section \ref{sec:uni} below --}  depending on $\beta$, such that, for any $\alpha \geq 0$ and for $n$ large enough,
$$
\sup_{P \in  \Sigma_{\alpha,\beta}^1} \bbE_{P^{\otimes n}}[|\hat f^{\od}(x) - f_P(x)|^p]^{1/p} \lesssim n^{-\beta/(2\beta + 1)},
$$
\end{thm}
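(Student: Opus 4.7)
I will build on Pelletier's estimator \eqref{pelest}, exploiting two simplifications specific to $d=1$: the volume density function $\vartheta_x$ is identically equal to one (as announced in Section \ref{sec:motiv}), and in dimension one the exponential map at $x$ is an arc-length isometry onto its image, so that $d_M(x,\exp_x(v))=|v|$ throughout the injectivity ball. Since the intrinsic distance $d_M$ is not observed, I will replace it by an Isomap-type surrogate $\hat d_M(x,\cdot)$, defined as the shortest-path distance in the $\delta_n$-neighborhood graph on the vertex set $\{x,X_1,\dots,X_n\}$, for a threshold $\delta_n \asymp \log n/n$. For an even, Lipschitz, compactly supported kernel $K:\bbR\to\bbR$ of order $\lfloor\beta\rfloor$ (in the classical one-dimensional sense) and bandwidth $h=n^{-1/(2\beta+1)}$, the proposed estimator is
\begin{equation*}
\hat f^{\od}(x)=\frac{1}{nh}\sum_{i=1}^n K\!\left(\frac{\hat d_M(x,X_i)}{h}\right).
\end{equation*}

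\textbf{Oracle analysis.} I first study the oracle $f^{\od,*}(x)$ built from the true $d_M$. By the reach condition and Proposition \ref{prp:injexp}, for $h$ small enough the support of $y\mapsto K(d_M(x,y)/h)$ lies within the injectivity ball of $x$, so the change of variable $y=\exp_x(v)$ gives
\begin{equation*}
\bbE[f^{\od,*}(x)]=\int_{\bbR}\frac{1}{h}K\!\left(\frac{|v|}{h}\right) g_x(v)\,dv,\qquad g_x:=f_P\circ\exp_x.
\end{equation*}
Here I use $\vartheta_x\equiv 1$ to discard the Jacobian and $d_M(x,\exp_x(v))=|v|$ to transport the kernel. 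The function $g_x$ is $\beta$-H\"older on $\bbR$ with norm bounded by $R$, so a standard Taylor expansion combined with the order-$\lfloor\beta\rfloor$ property of $K$ yields bias $O(h^\beta)$. The usual computation gives variance $O(1/(nh))$; higher $L^p$-moments follow via Rosenthal's inequality. Balancing bias and standard deviation with $h=n^{-1/(2\beta+1)}$ delivers the rate $n^{-\beta/(2\beta+1)}$.

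\textbf{Isomap perturbation and main obstacle.} It remains to show that $\hat f^{\od}-f^{\od,*}$ is negligible. The assumption $f_{\min}>0$ makes $M$ densely sampled with maximum spacing $O(\log n/n)$ with high probability; combined with $\tau_M\ge\tau$ -- which rules out spurious edges from bottleneck configurations when $\delta_n<\tau$ -- the $\delta_n$-neighborhood graph recovers the cyclic order of the sample on $M$ with probability $1-O(n^{-c})$. Along this order, consecutive samples $X_j,X_k$ satisfy $\|X_j-X_k\|=d_M(X_j,X_k)(1+O(d_M(X_j,X_k)^2/\tau^2))$, and telescoping over the $O(n/\log n)$ edges of the path from $x$ to $X_i$ yields
\begin{equation*}
\sup_{i}|\hat d_M(x,X_i)-d_M(x,X_i)|\,\lesssim\,\epsilon_n:=(\log n)^2/n^2.
\end{equation*}
Since $K$ is Lipschitz and compactly supported and $\#\{i:d_M(x,X_i)\lesssim h\}\lesssim nh$ with high probability (Bernstein, $f_P\le f_{\max}$),
\begin{equation*}
|\hat f^{\od}(x)-f^{\od,*}(x)|\,\lesssim\,\frac{1}{nh^2}\cdot nh\cdot \epsilon_n\,=\,\frac{\epsilon_n}{h}\,=\,o\bigl(n^{-\beta/(2\beta+1)}\bigr).
\end{equation*}
The main obstacle will be the clean uniform control of $\epsilon_n$: one has to combine the \emph{local} Euclidean-versus-geodesic comparison (which rests on $\tau_M\ge\tau$) with a \emph{global} argument that the graph captures the topology of $M$ under $f_{\min}>0$, and then show that the exceptional events on which either step fails contribute negligibly to the $L^p$-risk through crude bounds on $\hat f^{\od}$ and $f_P$.
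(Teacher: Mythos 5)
Your proposal is correct and mirrors the paper's own proof: the same three pillars appear --- the vanishing of the volume density $\vartheta_x\equiv 1$ in dimension one (Lemma \ref{lem:dim1}), the Isomap surrogate for $d_M$ accurate to $O((\log n/n)^2)$ uniformly over the sample on a high-probability event (Lemmas \ref{lem:dd1}, \ref{lem:dd2} and \ref{lem:eta}), and a Lipschitz-kernel perturbation estimate grafted onto a standard one-dimensional bias--variance analysis, with the exceptional event absorbed via a crude $1/(nh)$ bound (Proposition \ref{prp:kerd1}). The only cosmetic differences are your choice of a classical order-$\lfloor\beta\rfloor$ kernel where the paper takes the radial trace of $K^{(1,\ell)}$ (equivalent once the problem is pushed to the real line via $\exp_x$, which is an arc-length isometry), and your slightly sharper $\epsilon_n/h$ perturbation bound obtained by counting only the $O(nh)$ active summands, whereas the paper's cruder $\varepsilon^2/h^2$ bound already suffices to be $o(n^{-\beta/(2\beta+1)})$.
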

}
\marc{
The estimator described in \thmref{upper} requires the specification of $\alpha$, $\beta$ and $d$, that are usually unknown in practice. We can circumvent this impediment by building an adaptative procedure with respect to these parameters. In \secref{smooth} we adapt to the smoothness parameters $\alpha$ and $\beta$ by implementing Lepski's method \cite{Lep92}; in \secref{dim}, we adapt to $d$ by plugging-in a dimension estimator. We obtain the following result: 
}
\clement{
\begin{thm}[Adaptation]  \label{thm:adapt} Let $\ell \geq 0$. Assume that $f_{\min} > 0$. Then, there exists an estimator $\hat f^{\ad}(x)$ \marc{-- explicitly constructed in Section \ref{sec:dim} below --} depending on $\ell$ such that, for any $\alpha, \beta$ in $[0,\ell]$ and any $1 \leq d \leq D-1$, we have, for $n$ large enough,
$$
\sup_{P \in \sdab} \bbE_{P^{\otimes n}}[|\hat f^{\ad}(x) - f_P(x)|^p]^{1/p} \lesssim \(\frac{\log n}n\)^{\frac{\alpha \wedge \beta}{2\alpha \wedge \beta + d}}. 
$$
\end{thm}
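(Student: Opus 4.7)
The plan is to combine the dimension estimator from Section \ref{sec:dim} with the Lepski bandwidth selector from Section \ref{sec:smooth}, and argue that the adaptive estimator behaves as if $(\alpha\wedge\beta, d)$ were known, up to the logarithmic penalty that is intrinsic to pointwise adaptation (the Lepski--Low phenomenon). Concretely, I would set $\widehat f^{\mathrm{ad}}(x) = \widehat f_{\widehat h}(x)$, where $\widehat f_h$ is the kernel estimator \eqref{eq: def est} with plugged-in dimension $\widehat d(x)$, and $\widehat h = \widehat h(x,X_1,\ldots,X_n)$ is produced by Lepski's rule on the geometric grid $h_k = a^{-k}$ ranging over scales with $n h_k^{\widehat d} \gtrsim \log n$ and $h_k$ bounded above by a constant (depending on $\tau, L$).

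First, I would rely on the dimension estimator built in Section \ref{sec:dim} to establish a concentration statement of the form $\mathbb P_{P^{\otimes n}}(\widehat d(x)\neq d)\lesssim n^{-q}$ for any prescribed $q>0$ (by tuning the internal parameters of the scaling estimator, using that $f_{\min}>0$). On the event $E_n=\{\widehat d(x)=d\}$, the estimator reduces to the non-adaptive kernel estimator of Theorem \ref{thm:upper} with a data-driven bandwidth. Second, I would transfer the bias/variance analysis carried out in Section \ref{sec:ker}: the bias satisfies $|\mathbb E_P[\widehat f_h(x)\mid E_n]-f_P(x)|\lesssim h^{\alpha\wedge\beta}$ uniformly over $\sdab$ with $\alpha,\beta\in[0,\ell]$, thanks to the kernel having the right order $\lceil \ell \rceil$; and a Bernstein bound (together with $f_{\min}>0$ controlling $\|K((x-\cdot)/h)\|_\infty$ and the variance) yields the deviation
\[
\mathbb P_{P^{\otimes n}}\bigl(|\widehat f_{h_k}(x)-\mathbb E_P\widehat f_{h_k}(x)|> c\sqrt{\log n/(nh_k^d)}\,\bigl|\,E_n\bigr)\lesssim n^{-q}
\]
for a suitable constant $c$, uniformly in $h_k$ on the grid. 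These two ingredients are exactly what is needed for a Lepski-type oracle inequality.

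Third, armed with these estimates I would apply the now standard Lepski argument of \cite{LMS97}: comparing pairwise $|\widehat f_{h_k}-\widehat f_{h_j}|$ for $j\leq k$ against the threshold $c\sqrt{\log n/(nh_j^{\widehat d})}$ and selecting the largest $h_k$ not rejected, one shows that the realized bandwidth $\widehat h$ is a.s. at least of the order of the oracle balancing $h^{\alpha\wedge\beta} \asymp \sqrt{\log n/(n h^d)}$, i.e. $\widehat h \gtrsim (\log n/n)^{1/(2\alpha\wedge\beta+d)}$, while the resulting pointwise error is bounded by the corresponding bias-plus-threshold, i.e. $(\log n/n)^{(\alpha\wedge\beta)/(2\alpha\wedge\beta+d)}$, with probability at least $1-n^{-q}$ on $E_n$. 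Moving to $L^p$ expected risk is then a matter of using the uniform upper bound $\|\widehat f^{\mathrm{ad}}(x)\|_\infty \lesssim n h_{\min}^{-d}$ on $E_n^c$ together with $\mathbb P(E_n^c)\lesssim n^{-q}$, taking $q$ large enough (depending on $p,d,\ell$) so that the bad event contributes $o(n^{-\ell})$ and is absorbed into the main term.

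The main obstacle is the coupling between the random dimension $\widehat d(x)$ appearing in the denominator of \eqref{eq: def est} and the random bandwidth $\widehat h$ chosen by Lepski: both are data-driven from the same sample and both govern the variance scale. I would address this by first discretizing the bandwidth grid uniformly in the (finitely many) possible values of $\widehat d(x)\in\{1,\ldots,D-1\}$, running Lepski conditionally on each value, and then applying a union bound, which costs only a $\log D$ factor that is harmless. A secondary technical point is the uniformity of the Bernstein constants in $P\in\sdab$: these are controlled through the pointwise kernel bounds already proved in Section \ref{sec:ker} under the reach and H\"older conditions, which yield lower bounds on $\mu_{M_P}(B(x,h))$ of order $h^d$ uniformly and hence a variance of order $(nh^d)^{-1}$, exactly the scale assumed above.
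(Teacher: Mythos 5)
Your proposal is correct and follows essentially the same route as the paper: construct a dimension estimator with fast concentration (Proposition~\ref{prp:dimest} achieves this at super-exponential rate when $f_{\min}>0$), run Lepski's bandwidth selection using the plugged-in dimension $\widehat d$, and split the risk between the good event $\{\widehat d = d\}$ (where Proposition~\ref{prp:adapt} applies verbatim) and the bad event (controlled crudely via the uniform bound $|\widehat f_{\widehat h}(\widehat d;x)| \lesssim n$ and the concentration of $\widehat d$). One small remark: the ``coupling'' issue you flag as the main obstacle is in fact resolved almost trivially in the paper --- one simply writes the indicator $\ind_{\{\widehat d = d\}}$, on which the whole pipeline (kernel normalisation, grid, Lepski thresholds) coincides with the known-$d$ procedure, so no union bound over $d\in\{1,\dots,D-1\}$ or explicit conditioning is needed; your proposed workaround is harmless but heavier than necessary.
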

}
\clement{
We were unable to obtain oracle inequalities in the spirit of the Goldenshluger-Lepski method, see \citep{GL08, GL11, GL14}, due to the non-Euclidean character of the support of $f$: our route goes along the more classical approach of \cite{LMS97}. Obtaining oracle inequalities in this framework remain an open problem. 
}

\subsection{Kernel estimation} \label{sec:ker}

Classical nonparametric density estimation methods are based on kernel smoothing \citep{P62, S86}. In this section, we combine kernel density estimation with the minimal geometric features needed in order to recover efficiently their density.
\clement{Since the intrinsic dimension $d$ is not prone to change in this section, we further drop $d$ in (most of) the notation.} The proofs of this section can be found in \appref{ker}.\\

Let $K : \bbR^D \rightarrow \bbR$ be a smooth function vanishing outside the unit ball $B(0,1)$. 
Given an $n$-sample $(X_1,\dots,X_n)$ drawn from a distribution $P$ on $\bbR^D$, we are interested in the behaviour of the kernel estimator 
\beq \label{deffh}
\hfhx = \frac{1}{nh^d} \sum_{j=1}^n K\(\frac{x - X_i}{h}\),~~~~ h >0.
\eeq
\clement{Note that the normalization here is $h^d$ and not $h^D$ as one would set for a classical kernel estimator in $\bbR^D$.}
Our main result is  that $\hfhx$ behaves well when $P$ is supported on a $d$-dimensional submanifold of $\bbR^D$.\\

We need some notations. For $P \in \sdab$ we define
$$
f_h(P,x) = \bbE_{P^{\otimes n}}[\hfhx],
$$
$$
\mathcal B_h(P,x) = f_h(P,x) - f_P(x),
$$
and
$$
\widehat \xi_h(P,x) = \hfhx - f_h(P,x),
$$
that correspond respectively to the mean, bias and stochastic deviation of the estimator $\hfhx$. We also introduce the quantity \clement{
$$
\Omega (h) = \sqrt{\frac{2\omega}{nh^d}} + \frac{\|K\|_\infty}{nh^d}~~~~\text{with}~~~\omega = 4^d \zeta_d \|K\|_\infty^2 f_{\max},
$$}
where $\zeta_d$ is the volume of the unit ball in $\mathbb R^d$. The quantity $\Omega(h)$ will prove to be a good majorant of the stochastic deviations of $\widehat  f_h(x)$. 
The usual bias-stochastic decomposition of $\widehat  f_h(x)$ leads to 
\begin{align} \label{bsd}
\bbE_{P^{\otimes n}} [|\wh f_h(x) - f_P(x)|^p]^{1/p} \leq  |\mathcal B_h(P,x)| + \big( \bbE_{P^{\otimes n}}\big[|\widehat  \xi_h(P,x)|^p\big]\big)^{1/p}. 
\end{align}
We study each term separately. The stochastic term can readily be bounded.
\begin{prp} \label{prp:sd} Let $p \geq 1$. There exists a constant $c_p > 0$ depending on $p$ only such that or any $P \in \sdab$ and any $h < \tau/2$: 
\beq
\(\bbE_{P^{\otimes n}}\big[ |\widehat  \xi_h(P,x)|^p\big]\)^{1/p} \leq c_p \Omega(h) \nonumber.
\eeq
\end{prp}

Now we turn to the bias term. We need certain properties for the kernel $K$. More precisely, we assume that
\begin{ass} \label{ass:kernel} \benum
\item[(i)] $K$ is smooth and supported on the unit ball $B(0,1)$;
\item[(ii)] For any $d$-dimensional subspace $H$ of $\bbR^D$, we have $\int_H K(v)dv = 1$. 
\eenum
\end{ass}

One way to obtain \assref{kernel} is to set $\Lambda(z) = \exp\big(-1/(1 - \|z\|^2)\big)$ for $z \in B(0,1)$ and $\Lambda(x)=0$ otherwise. Since $\Lambda$ is rotationally invariant, its integral is the same over any $d$-subspace $H$ of $\bbR^D$. Thus, with $\lambda_d = \int_{H_0} \Lambda(v)dv$ where $H_0 = \bbR^d\times\{0_{\bbR^{D-d}}\}$, the function $K (z) = \lambda_d^{-1} \Lambda(z)$ is a smooth kernel, supported on the unit ball of the ambient space $\mathbb R^D$ that satisfies \assref{kernel}. In the following, we pick an arbitrary kernel $K$ such that \assref{kernel} is satisfied.

\begin{lem} \label{lem:expans}
For $P \in \sdab$ and any $h < \tau/2$, setting $k = \ceil{\alpha \wedge \beta - 1}$, we have
\beq
f_h(P,x) = f(x) + \sum_{j =1}^{k}  h^j G_j(P,x) + R_h(P,x), \eeq
with $| G_j(P,x) | \lesssim 1 $ and $| R_h(P,x) | \lesssim h^{\alpha\wedge \beta}$.
\end{lem}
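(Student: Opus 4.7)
The plan is to express $f_h(P,x)$ as a single integral over the tangent space $T_xM$ via the exponential chart, and then Taylor-expand the integrand in the scalar $h$ around $0$. Since $K$ is supported on $B(0,1)$, the expectation
$$f_h(P,x) = \frac{1}{h^d}\int_M K\!\left(\tfrac{x-y}{h}\right) f_P(y)\, d\mu_M(y)$$
only involves the points of $M$ in $B(x,h)$. For $h < \tau/2$, the reach condition and \propref{injexp} ensure that this region lies inside the injectivity domain of $\exp_x$. Setting $J_x(v) := \sqrt{\det g_{ij}^x(v)}$ and changing variables $y = \exp_x(v)$ followed by $v = hu$, we rewrite
$$f_h(P,x) = \int_{T_xM} \Phi(h,u)\,du, \qquad \Phi(h,u) := K\!\left(\tfrac{x-\exp_x(hu)}{h}\right) f_P(\exp_x(hu))\, J_x(hu),$$
where a standard reach-based comparison between chord and geodesic length (of the form $\|x-\exp_x(v)\| \geq \|v\|(1-\|v\|/(2\tau))$) shows that $\Phi(h,u)$ vanishes outside a ball $\{\|u\|\leq C(\tau)\}$ independent of $h$.

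The heart of the proof is a Taylor expansion in $h$ of $\Phi(\cdot, u)$ at $0$. Since $\exp_x$ is $(\alpha+1)$-H\"older by condition $(iv)$ of Definition \ref{def modele}, with $d\exp_x(0) = \mathrm{id}_{T_xM}$ and uniform pointwise bounds on $d^j\exp_x$ from the lemma preceding Definition \ref{def: def model stat}, we get
$$\exp_x(hu) = x + hu + \sum_{j=2}^{\lceil \alpha\rceil}\tfrac{h^j}{j!}\, d^j\exp_x(0)[u^{\otimes j}] + O(h^{\alpha+1}),$$
so that $\tfrac{x-\exp_x(hu)}{h} = -u + h\psi_1(u)+\ldots+h^{\lceil\alpha\rceil-1}\psi_{\lceil\alpha\rceil-1}(u)+O(h^\alpha)$ admits a well-defined $\alpha$-H\"older expansion in $h$. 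Similarly, $h \mapsto f_P(\exp_x(hu))$ is $\beta$-H\"older (since $f_P\in\cH^\beta$) and $h \mapsto J_x(hu)$ is $\alpha$-H\"older with $J_x(0)=1$ (since $J_x$ depends only on the $\alpha$-H\"older map $d\exp_x$). Composing with $K\in C^\infty$ and multiplying, $\Phi(\cdot,u)$ is $(\alpha\wedge\beta)$-H\"older at $0$ uniformly in $u$, so Taylor's theorem yields
$$\Phi(h,u) = \sum_{j=0}^k \tfrac{h^j}{j!}\,\partial_h^j \Phi(0,u) + R(h,u),\qquad |R(h,u)| \lesssim h^{\alpha\wedge\beta},$$
with $k = \lceil \alpha\wedge\beta - 1\rceil$.

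It remains to identify the main term and control the coefficients. Since $J_x(0) = 1$, we have $\Phi(0,u) = K(-u)\, f_P(x)$, and Assumption \ref{ass:kernel}(ii) applied to the $d$-dimensional subspace $H = T_xM$ gives $\int_{T_xM} K(-u)\,du = 1$, recovering exactly $f_P(x)$ as the zeroth-order term. For $1 \leq j \leq k$, the coefficient $G_j(P,x) = \int \tfrac{1}{j!}\partial_h^j\Phi(0,u)\,du$ is an integral over a bounded domain of $K$ and its derivatives against polynomials in $u$ whose coefficients are derivatives at $0$ of $\exp_x$, $f_P\circ\exp_x$ and $J_x$; all are bounded by $L$, $R$ and the constants mentioned above, whence $|G_j(P,x)|\lesssim 1$. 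Integrating the remainder yields $|R_h(P,x)|\lesssim h^{\alpha\wedge\beta}$. The main technical hurdle lies in the Taylor expansion of $K\!\left(\tfrac{x-\exp_x(hu)}{h}\right)$: one must exploit the cancellation $d\exp_x(0) = \mathrm{id}$ to ensure that its argument remains bounded as $h\to 0$ and is $\alpha$-H\"older in $h$ (rather than $(\alpha-1)$-H\"older, which would degrade the final rate).
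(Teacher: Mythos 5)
Your proposal is correct and follows essentially the same route as the paper's proof: both rewrite $f_h(P,x)$ as an integral over $T_xM$ via the exponential chart, rescale $v = hu$ to a bounded domain (controlling the domain uniformly in $h$ via the reach/Proposition~\ref{prp:eqdist}), expand the integrand to order $k=\ceil{\alpha\wedge\beta-1}$ using the $(\alpha+1)$-H\"older regularity of $\exp_x$, the $\beta$-H\"older regularity of $f_P\circ\exp_x$, the smoothness of $K$, and the induced regularity of the Jacobian $\zeta(v)=\sqrt{\det g^x(v)}$, and finally identify the zeroth-order coefficient as $f(x)$ via Assumption~\ref{ass:kernel}(ii). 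The only (cosmetic) difference is bookkeeping: the paper expands each of the four factors separately in $v$ (equations~(\ref{dev1})--(\ref{dev4})) and collects terms by homogeneity degree after substituting $v=hw$, whereas you package the product into $\Phi(h,u)$ and Taylor-expand once in $h$; you also correctly flag the key cancellation $d\exp_x(0)=\mathrm{id}$ that keeps the argument of $K$ bounded and $\alpha$-regular after dividing by $h$, which is the same mechanism the paper exploits by expanding $\exp_x$ to order $k+1$ rather than $k$.
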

The existence of such an expansion allows, by carefully choosing the kernel, to cancel the intermediate terms. Starting from a kernel $K$ satisfying \assref{kernel}, we recursively define a sequence of smooth kernels $(K^{(d, \ell)})_{\ell \geq 1}$, simply denoted by $K^{(\ell)}$ in this section, with support in $B(0,1)$ as follows (see \figref{kl}). For $z \in \bbR^D$, we put
\beq \label{kell}
\begin{cases} K^{(1)}(z) = K(z) \\
K^{(\ell + 1)}(z) = 2^{1+d/\ell} K^{(\ell)}(2^{1/\ell} z) - K^{(\ell)}(z) ~~~\forall \ell \geq 1.
\end{cases}
\eeq
A few remarks can be made: {\bf 1)} In a classical kernel density estimation framework, the integer $\ell -1$ plays the role of the order of the kernel. {\bf 2)} The assumption that $K$ is compactly supported is seemingly quite strong. This is a way to make sure that the support of $y \mapsto K\((y - x)/h\)$ is within the injectivity ball of the map $\exp_x$ for any $h < \pi\tau$. {\bf 3)} The construction of $K$ is simply an example of a Richardson's extrapolation as coined by \citet{Ric11}. {\bf 4)} This construction somewhat differs from the classical constructions than can be found in textbooks such as \citet{Tsy08}. There is one practical reason: we require that all the kernels satisfy \assref{kernel}; another reason that appears to be more intrinsically related to our model: \marc{because the Euclidean distance is only a second order approximation of the Riemannian distance on $M$, defining a kernel through orthogonality relations with respect to a family of polynomials is not sufficient in our framework}.

\begin{figure}[ht]
\centering
\includegraphics[scale=0.55]{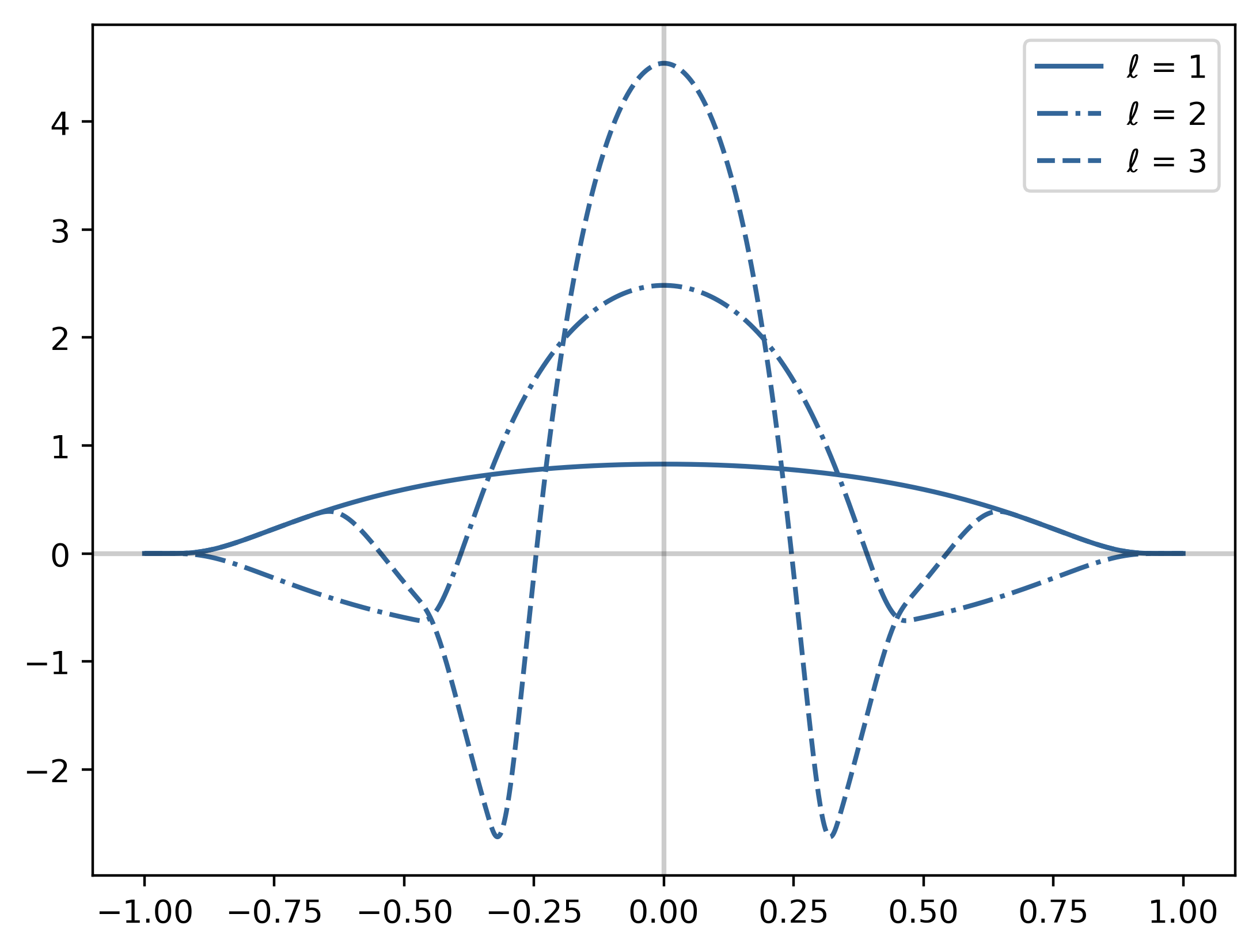}
\caption{\small Plots of the kernel $K^{(d,\ell)}$ for $d = 1$ and $\ell =1,2,3$.}
\label{fig:kl}
\end{figure}
\begin{prp} \label{prp:bias}
Let $\ell$ be an integer greater than $\alpha$ and $\beta$, and let $K^{(\ell)}$ be the kernel defined in \eqref{kell} starting from a kernel $K$ satisfying \assref{kernel}. Then, for any $P \in \sdab$ and  any bandwidth $h < \tau/2$, the estimator $\widehat  f_h(x)$ defined as in \eqref{deffh} using $K^{(\ell)}$ is such that
\beq \label{biasx}
|\mathcal B_h(P,x)| \lesssim h^{\alpha \wedge \beta}.
\eeq 
\end{prp}
\clement{Patching together \prpref{sd} and \prpref{bias} yields 
$$
\sup_{P \in \sdab} \bbE_{P^{\otimes n}} [|\wh f_h(x) - f_P(x)|^p]^{1/p} \lesssim \Omega(h) + h^{\alpha\wedge\beta}, 
$$
for any $h < \tau/2$. Therefore the estimator $\wh f(x) = \wh f_h(x)$ specified with $h = n^{-1/(2\alpha\wedge\beta + d)}$ indeed satisfies the conclusion of \thmref{upper}, for $n$ large enough. 
}

\subsection{\marc{A minimax estimator in the case $d=1$ that covers the case $\alpha < \beta$}} \label{sec:uni}

\marc{
The gap we observe between the two rates in \thmref{upper} and \thmref{lower} leads to the following question: does the regularity $\alpha \geq 0$ of $M$ have a genuine limiting effect in the estimation of $f(x)$, or does it rather reveal a weakness of the estimator described in \secref{ker}. We do not have a definitive answer to this question except for $d=1$ {\it i.e.} when $M$ is a closed curve in an Euclidean space. We can then show that the parameter $\alpha$ does not interfere at all with the density estimation.} The proofs of this section can be found in \appref{uni}. \\

If $d=1$, any submanifold $M$ in $\cC_{1}(\tau)$ is a closed smooth injective curve that can be parametrized by a unit-speed path $\gamma_M : [0, L_M] \rightarrow \bbR^D$ with $\gamma_M(0) = \gamma_M(L_M)$ and with $L_M = \vol M$ being the length of the curve. In that case, the volume density function is trivial.

\begin{lem} \label{lem:dim1}
For $M \in \cC_1(\tau)$, for any $z \in M$ and any $v \in T_z M$, we have $\det g^z(v) = 1$. 
\end{lem}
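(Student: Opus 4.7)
The plan is to exploit the fact that in dimension $d=1$, there is essentially only ``one direction'' at each point, so geodesics are trivial to describe and the matrix $g^z(v)$ collapses to a single scalar.

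First I would fix $z \in M$ and choose a unit vector $e_1 \in T_z M$, so that any $v \in T_z M$ writes as $v = s e_1$ for some $s \in \bbR$. Since $d=1$, the Gram matrix $g^z(v)$ is $1\times 1$, so
\[
\det g^z(v) \;=\; g^z_{11}(v) \;=\; \|d\exp_z(v)[e_1]\|^2,
\]
and the whole statement reduces to showing that $d\exp_z(v)[e_1]$ has unit norm.

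Next I would identify $d\exp_z(v)[e_1]$ with the velocity of the unit-speed geodesic emanating from $z$ in the direction $e_1$. More precisely, writing $\gamma(t) = \exp_z(t e_1)$, which is well-defined on all of $\bbR$ by Hopf--Rinow (since $M \in \cC_1(\tau)$ is compact), the chain rule gives
\[
d\exp_z(v)[e_1] \;=\; \frac{d}{dr}\Big|_{r=0} \exp_z\!\big((s+r)e_1\big) \;=\; \gamma'(s).
\]
The standard fact that geodesics have constant speed then yields $\|\gamma'(s)\| = \|\gamma'(0)\| = \|e_1\| = 1$, from which $\det g^z(v) = 1$ follows immediately.

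There is no real obstacle here; the only point worth double-checking is that the geodesic $\gamma$ is defined for all $s$ (so that the formula above makes sense for arbitrary $v \in T_z M$), which is guaranteed by the compactness of $M$ through Hopf--Rinow, as already invoked in Section \ref{sec: classical geom}. One can alternatively, and perhaps more concretely, observe that the unique (up to orientation) unit-speed parametrization $\gamma_M : \bbR/L_M\bbZ \to M$ of the closed curve $M$ satisfies $\gamma_M(u_0 + t) = \exp_{\gamma_M(u_0)}(t\,\gamma_M'(u_0))$ for all $t$, so that $d\exp_z(v)[e_1]$ is literally $\gamma_M'(u_0 + s)$ for $z = \gamma_M(u_0)$ and $e_1 = \gamma_M'(u_0)$, and this vector has unit norm by construction.
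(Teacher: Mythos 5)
Your proof is correct and takes essentially the same approach as the paper: reduce $\det g^z(v)$ to the squared norm of $d\exp_z(v)$ applied to a unit vector, identify that vector with the velocity of the unit-speed geodesic (equivalently, the arclength parametrization of the closed curve $M$), and conclude from unit speed. Your ``alternative, more concrete'' reformulation at the end is in fact exactly the argument the paper gives, working from the unit-speed parametrization $\gamma_M$ and noting $\exp_z(s) = \gamma_M(u_0+s)$.
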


Thanks to \lemref{dim1}, the estimator proposed by \cite{Pel05} takes a simpler form, which we will try to take advantage of. Indeed, in the representation \eqref{pelest} of \cite{Pel05}, only $d_M$ remains unknown.  We now show how to efficiently estimate $d_M$ thanks to the Isomap method as coined by \cite{TDL00}. The analysis of this algorithm essentially comes from \cite{Bal00} and is pursued in \cite{AG19}, but the bounds obtained there are manifold dependent. We thus propose a slight modification of their proofs in order to obtain uniform controls over $\cC_1(\tau)$, and make use of the simplifications coming from the dimension $1$. Indeed, for $d=1$, we have the following simple and explicit formula for the intrinsic distance on $M$: 
\beq
d_M(\gamma_M(s), \gamma_M(t)) = |t-s| \wedge (L_M - |t-s|)~~~\forall s,t \in [0, L_M]. \nonumber
\eeq

\newcommand{\wge}{\wh G_\ve(x)}

The Isomap method can be described as follows: let $\ve > 0$, and let $\cG_\ve$ be the $\ve$-neighbourhood graph built upon the data $(X_1,\dots,X_n)$ and $x$ --- namely, $\cG_{\ve} = (V,E)$ where $V = \(x,X_1,\dots,X_n\)$, and where $E = \{(y,z) \in V~|~\|y-z\| \leq \ve\}$. For a path in $\cG_\ve$ (meaning: a sequence of adjacent vertices) $s = (p_0, \dots , p_m)$, we define its length as $L_s = \|p_1 - p_0\|+\dots+\|p_m - p_{m-1}\|$. The distance between $x$ and a vertice $y$ in the graph $\cG_\ve$ is then defined as
\beq
\wh d_\ve(x,y) = \min\big\{L_s~|~s~\text{path in}~\cG_\ve~\text{connecting $x$ to $y$}\big\},
\eeq
and we set this distance to $\infty$ if $x$ and $y$ are not connected.  We are now ready to describe our estimators $\hat f^{\od}(x)$. For any $h, \ve > 0$, we set

\beq \label{kdeuni}
\hat  f_h^{\od}(x) =\frac{1}{nh} \sum_{i=1}^n K^{\od}\big( \wh d_\ve(x,X_i)/h\big),
\eeq 
for some kernel $K^{\od} : \bbR \rightarrow \bbR$. Notice that the kernel $K^{(1,\ell)}(\cdot) : \bbR^D \rightarrow \bbR$ defined in \secref{ker} starting from kernel $K = \lambda_1^{-1} \Lambda$ can be put in the form $K^{(1,\ell)}(y) = K^{(1,\ell)}(\|y\|)$ with $K^{(1,\ell)}(\cdot)$ denoting thus (with a slight abuse of notation) both functions starting from either $\bbR$ or $\bbR^D$. We choose this kernel in the next statement.
\clement{
\begin{prp} \label{prp:kerd1} Assume that $f_{\min} > 0$. The estimator defined in \eqref{kdeuni} above and specified with $K^{\od} = K^{(1,\ell)}(\cdot)$ satisfies the following property:  for any $\beta \in [0,\ell]$ and any $\alpha \geq 0$, we have
\beq
\sup_{P \in \Sigma_{\alpha,\beta}^1} \bbE_{P^{\otimes n}}[|\hat f^{\od}_h(x) - f_P(x)|^p]^{1/p} \lesssim \frac{\ve^2}{h^2} + \Omega(h) + h^\beta + \frac{1}{nh}~~~~\text{with}~~~\ve = \frac{32(p+1)}{f_{\min} } \times \frac{\log n}n, \nonumber
\eeq
for $h < \tau/4$ and $n$ large enough.
\end{prp}
}
The proof of \thmref{onedim} readily follows from \prpref{kerd1} using the estimator $\hat f^{\od} = \hat  f^{\od}_h$ with $h = n^{-1/(2\beta+1)}$.

\subsection{Smoothness adaptation} \label{sec:smooth}

We implement Lepski's algorithm, following closely \cite{LMS97} in order to automatically select the bandwidth from the data $(X_1,\ldots, X_n)$.  We know from Section \ref{sec:ker} that the optimal bandwidth on $\sdab$ is of the form $n^{-1/(2\alpha \wedge \beta)+d}$. Hence, without prior knowledge of the value of $\alpha$ and $\beta$, we can restrict our search for a bandwidth in a bounded interval of the form $[h^-, 1]$ discretized as follow
\beq
\bbH = \big\{2^{-j},~~\text{for}~~0 \leq j \leq  \log_2 (1/h^-)\big\} \nonumber
\eeq
We choose to pick
$$
h^- = \(\frac{\|K\|_\infty}{2\omega}\)^{1/d} \frac1{n^{1/d}};
$$
this bandwidth is always smaller than the optimal bandwidth $n^{-1/(2\alpha \wedge \beta + d)}$ on $\sdab$ for $n$ large, and is such that $\Omega(h) \leq 2\sqrt{2\omega/(nh^d)}$ for all $h \geq h^-$. For $h,\eta \in \bbH$, we introduce the following quantities:
\begin{align}
\lambda(h) &= 1 \vee \sqrt{ \Theta d \log(1/h)}, \nonumber \\
\psi(h,\eta) &= \Omega(h) \lambda(h) +  \Omega(\eta) \lambda(\eta) \label{defpsi}
\end{align}
where $\Theta$ is a positive constant (to be specified). For $h \in \bbH$ we define the subset of bandwidths $\bbH(h) = \{\eta \in \bbH,~\eta \leq h\}$ 
The selection rule for $h$ is the following:
\beq  \label{defhh}
\widehat  h(x) = \max \Big\{ h \in \bbH~|~\forall \eta \in \bbH(h), ~|\widehat  f_h(x) - \widehat  f_\eta(x)| \leq \psi(h,\eta) \Big\}, \nonumber
\eeq
and we finally consider the estimator
\begin{equation} \label{eq:defadapt}
\widehat  f(x) = \widehat  f_{\widehat  h(x)}(x).
\end{equation}
where we recall that $f_h$ is defined at \eqref{deffh}. 
\clement{
\begin{prp} \label{prp:adapt} Assume $\Theta > p$. Let $\ell \in \bbN$, and let $\wh f(x)$ be the estimator defined in \eqref{eq:defadapt} using $K^{(\ell)}$ originated from a kernel $K$ satisfying \assref{kernel}. Then, for any $\alpha,\beta \in [0,\ell]$, we have, for $n$ large enough
$$
\sup_{P \in \sdab}\bbE_{P^{\otimes n}}[|\widehat  f(x) - f_P(x)|^p]^{1/p} \lesssim \(\frac{\log n}{n}\)^{\alpha \wedge \beta/(2\alpha \wedge \beta+d)}.
$$
\end{prp}
}
The proof of \prpref{adapt} can be found in \appref{smooth}.
Some remarks: {\bf 1)} \prpref{adapt} provides us with a classical smoothness adaptation result in the spirit of \citep{LMS97}: the estimator $\widehat  f$ has the same performance as the estimator $\widehat f_h$ selected with the optimal bandwidth $n^{-1/(2\alpha \wedge \beta+d)}$, up to a logarithmic factor
on each model $\sdab$ without the prior knowledge of $\alpha \wedge \beta$ over the range $[0, \ell]$. {\bf 2)} The extra logarithmic term is the unavoidable payment for the Lepski-Low phenomenon \cite{L90, Lo92} when recovering a function in pointwise or in a uniform loss.

\subsection{Simultaneous adaptation to smoothness and dimension} \label{sec:dim}
\clement{
The estimators considered in \thmref{upper} or \prpref{adapt} heavily rely on the intrinsic dimension $d$ through the choice a of kernel satisfying \assref{kernel}, through the normalization $h^d$ and either through the choice of an optimal bandwidth $h$, or the selection procedure \eqref{defpsi}-\eqref{eq:defadapt}.} \marc{We now show how to adapt to $d$ considered as an unknown and nuisance parameter.}The proofs of this section can be found in \appref{dim}.

We redefine all the quantities introduced before as now depending on $d$. Namely, for $h,\eta > 0$, and a given family of kernel $K(d;\cdot)$, we set
\begin{align*}
\wh f_h(d;x) &= \frac1{nh^d} \sum_{i=1}^n K(d; (X_i - x)/h). \label{deffhd} \\
\Omega(d;h) &= \sqrt{\frac{2\omega_d}{nh^d}} + \frac{\|K(d;\cdot)\|_\infty}{nh^d}~~~\text{with}~~~ \omega_d = 4^d \zeta_d \|K(d;\cdot)\|_\infty^2 f_{\max},  \\
\lambda(d;h) &= 1\vee\sqrt{\Theta d \log(1/h)}, \\
\psi(d;h,\eta) &= v(d;h) \lambda(d;h) + v(d;\eta) \lambda(d;\eta), \\
h^-_d &= (\|K(d;\cdot)\|_\infty / 2\Omega_d)^{1/d}n^{1/d}, \\
\bbH_d &= \{2^{-j}, ~\text{for}~~ 0 \leq j \leq \log_2(1/h_d^-)\},
\end{align*}
where $\Theta$ is a constant. We also define
\beq \label{whh}
\wh h(d; x) = \max\{h \in \bbH_d ~|~ \forall \eta \in\bbH_d(h),~~|\wh f_h(d;x) - \wh f_\eta(d;x)| \leq \psi(d;h,\eta)\}
\eeq
with $\bbH_d(h) = \{\eta \in \bbH_d, ~\eta \leq h\}$. We are now left with the choice of kernel family $K(d;\cdot)$. For any $1 \leq d \leq D-1$ and $h > 0$, we define
\beq
K^{(1)}(d; x) = \lambda_d^{-1} \Lambda(x) \nonumber
\eeq 
where $\Lambda$ and $\lambda_d$ have been introduced in \secref{ker}. We then pick an integer $\ell \in \bbN$ and choose 
\beq
K(d;\cdot) = K^{(d,\ell)}(d;\cdot) \label{kdl}
\eeq
where $K^{(d,\ell)}(d;\cdot)$ is defined by recursion in \eqref{kell} starting from the kernel $K^{(1)}(d;\cdot)$. 

We assume that we have an estimator $\wh d$ of the dimension $d$ of $M$ with values in $\{1,\dots, D\}$. More precisely, we require the following property
\begin{ass} \label{ass:dim}
For any $1 \leq d < D$ and all real numbers $\alpha,\beta \in [0,\ell]$, we have 
$$\sup_{P \in \sdab} P^{\otimes n}\(\wh d \neq d\) \lesssim n^{-3p/2}.$$
\end{ass}

If we are given such a estimator of the dimension $d$, then we can built a estimator that adapts to this parameter.
\clement{
\begin{prp}\label{prp:dimadapt} Let $\wh f(x) = \wh f_{\wh h}(\wh d, x)$ built with the kernel family \eqref{kdl}, where $\wh d$ is a estimator satisfying \assref{dim} and where $\wh h = \wh h(\wh d,x)$ is defined at \eqref{whh}. Then, for any $1 \leq d \leq D -1$, and any $\alpha,\beta \in [0,\ell]$, we have, for $n$ large enough
$$
\sup_{P \in \sdab}\bbE_{P^{\otimes n}}[|\widehat  f(x) - f_P(x)|^p]^{1/p} \lesssim \(\frac{\log n}{n}\)^{\alpha \wedge \beta/(2 \alpha \wedge \beta + d)} \nonumber
$$
\end{prp}
It only remains to show that there exists an estimator $\wh d$ satisfying \assref{dim} to obtain \thmref{adapt}. 
}
There are various way to define such an estimator, see \cite{FSA07} or even \cite{KRW16} where an estimator with super-exponential minimax rate on a wide class of probability measures is constructed. For sake of completeness and simplicity, we will mildly adapt the work of \cite{FSA07} to our setting. The resulting estimator will behave well as soon as we add the assumption that $f_{\min} > 0$. 

\begin{dfn} For a probability measure $P$, we write $P_\eta = P(B(x,\eta))$ for any $\eta > 0$, and $\wh P_\eta = \wh P_n(B(x,\eta))$ where $\wh P_n = n^{-1}\sum_{i = 1}^n \delta_{X_i}$ denotes the empirical measure of the sample $(X_1,\ldots, X_n)$. Define
\beq
\wh \delta_\eta = \log_2 \wh P_{2\eta} - \log_2 \wh P_\eta, \nonumber
\eeq
and set $\wh \delta_\eta = D$ when $\wh P_\eta = 0$. We define $\wh d_\eta$ to be the closest integer of $\{1,\dots, D\}$ to $\wh \delta_\eta$, namely $\wh d_\eta = \floor{\wh \delta_\eta + 1/2}$. 
\end{dfn}


\begin{prp} \label{prp:dimest} Assume that $f_{\min} > 0$. Then, for any $1 \leq d \leq D-1$, and any $\alpha,\beta \geq 0$, the estimator $\wh d = \wh d_\eta$ for $\eta = n^{-1/(2D+2)}$ verifies for $n$ large enough 
$$\sup_{P \in \sdab} P^{\otimes n} \( \wh d \neq d \) \leq 4 \exp\(-2n^{1 - (d+1)/(D+1)}\).$$
\end{prp}

\section{Numerical illustration} \label{sec:simu}

In this section we propose a few simulations to illustrate the results presented above. The goal is two-fold
\bitem
\item To highlight the rate obtained in \thmref{upper} using estimator $\wh f_h$, in the case where $\beta \leq \alpha$, on arbitrary submanifold and for a carefully chosen bandwidth $h$;
\item To show the computational feasability and performance of estimator $\hat f^{\ad}$ described in \secref{smooth}.
\eitem 
For the sake of visualisation and simplicity, we focus on two typical examples of submanifold of $\bbR^D$, namely non-isometric embeddings of the flat circle $\bbT^1 = \bbR / \bbZ$ and of the flat torus $\bbT^2 = \bbT^1 \times \bbT^1$.  In particular, these embeddings will be chosen in such way that their images, as submanifolds of $\bbR^D$, are not homogeneous compact Riemannian manifolds, so that the work of \cite{KNP12} for instance cannot be of use here.

For a given embedding $\Phi : N \rightarrow M \subset \bbR^D$ where $N$ is either $\bbT^1$ of $\bbT^2$, we construct absolutely continuous probabilities on $M$ by pushing forward probability densities of $N$ w.r.t. their volume measure. Indeed, if $Q = g \cdot \mu_N$, the push-forward measure $P = \Phi_* Q$ has density $f$ with respect to $\mu_M$ given by
\beq \label{formulaf}
\forall z \in M,~~~ f(z) = \frac{ g(\Phi^{-1}(z)) }{\left|\det d\Phi(\Phi^{-1}(z))\right|} 
\eeq
where the determinant is taken in an orthonormal basis of $T_{\Phi^{-1}(x)} N$ and $T_x M$, so that, if $\Phi$ is chosen smooth enough, $f$ has the same regularity as $g$. If $\Phi$ is an embedding of $\bbT^1$, we simply have $\left|\det d\Phi(y)\right| = \|\Phi'(y)\|$ for all $y \in \bbT^1$. If now $\Phi$ maps $\bbT^2$ to $M$, we have
\begin{align} \label{detform}
\left|\det d\Phi(y)\right| &=  \sqrt{\det \inner{d\Phi(y)[e_i]}{d\Phi(y)[e_j]}_{1\leq i,j \leq 2}} \nonumber \\
&=  \sqrt{ \left\|d\Phi(y)[e_1]\right\|^2 \left\| d\Phi(y)[e_2] \right\|^2 - \inner{d\Phi(y)[e_1]}{d\Phi(y)[e_2]}^2}
\end{align}
where $(e_1, e_2)$ is an orthonormal  basis of $\bbR^2 \simeq T_y \bbT^2$. 

Strictly speaking, the probability measures $P$ exhibited below are not elements of the models $\sdab$, but we know that they locally coincide with some $\widetilde P \in \sdab$ around our candidate point $x$, meaning that
$$
P_{| B(x,r)} = \widetilde P_{|B(x,r)}~~~\text{for some}~~r>0.
$$
This ensures that all the results displayed in \secref{main} hold for $P$ --- see \remref{local} for a discussion on the local character of our setting.

\subsection{An example of a density supported by a one-dimensional submanifold}

Let $\beta \in \bbN^*$ and define the following function for $v \in [-1/2,1/2]$
\beq \label{gs}
g_\beta(v) = C_\beta \times \( 1 - (-2v)^{\beta}\) \ind_{[-1/2,0)}(v)+ C_\beta \(1 - (2v)^{\beta+1}\)\ind_{[0,1/2]}.
\eeq
where $C_\beta$ is an explicit normalisation constant. The function $g_\beta$ is positive and $\int_0^1g_\beta(v)dt=1$; it defines a probability density over [-1/2,1/2]. Also, because the $(\beta-1)$-th derivative of $g_\beta$ is $1$-Lipschitz, but its $\beta$-derivative is discontinuous at $v = 0$, the function $g_\beta$ is $\beta$-H\"older but {\it not} $(\beta+\ve)$-H\"older for any $\ve > 0$. See \figref{gbeta} for a few plots of the functions $g_\beta$. 
\begin{figure}[ht!]
\centering
\includegraphics[scale=0.55]{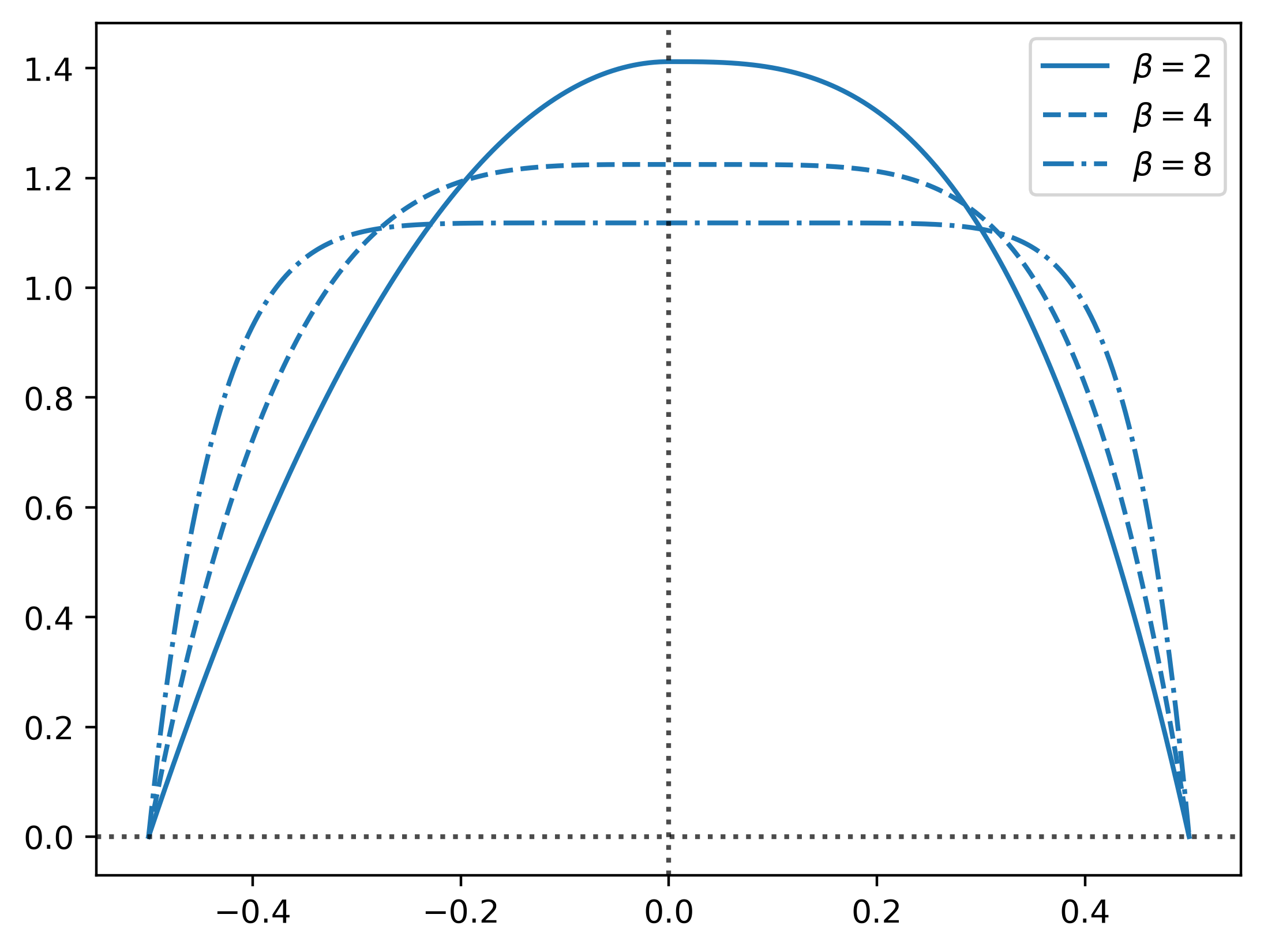}
\caption{\small Plots of the densities $g_\beta$ for $\beta = 2,4,8$.}
\label{fig:gbeta}
\end{figure}
We next consider the parametric curve 
\beq
\Phi : \begin{cases} \bbT^1 \rightarrow \bbR^2 \\
t \mapsto \(\cos(2 \pi v) + a \cos( 2 \pi \omega v), \sin(2 \pi v) + a \sin(2 \pi \omega v) \). \nonumber
\end{cases}
\eeq
Short computations show that $\Phi$ is indeed an embedding as soon as $a \omega < 1$, in which case $M = \Phi(\bbT^1)$ is indeed a smooth compact submanifold of $\bbR^2$. For the rest of this section, we set $a = 1/8$ and $\omega = 6$. See \figref{vard1} for a plot of $M$ with these parameters. 
\begin{figure}[ht!]
\centering
\includegraphics[scale=0.5]{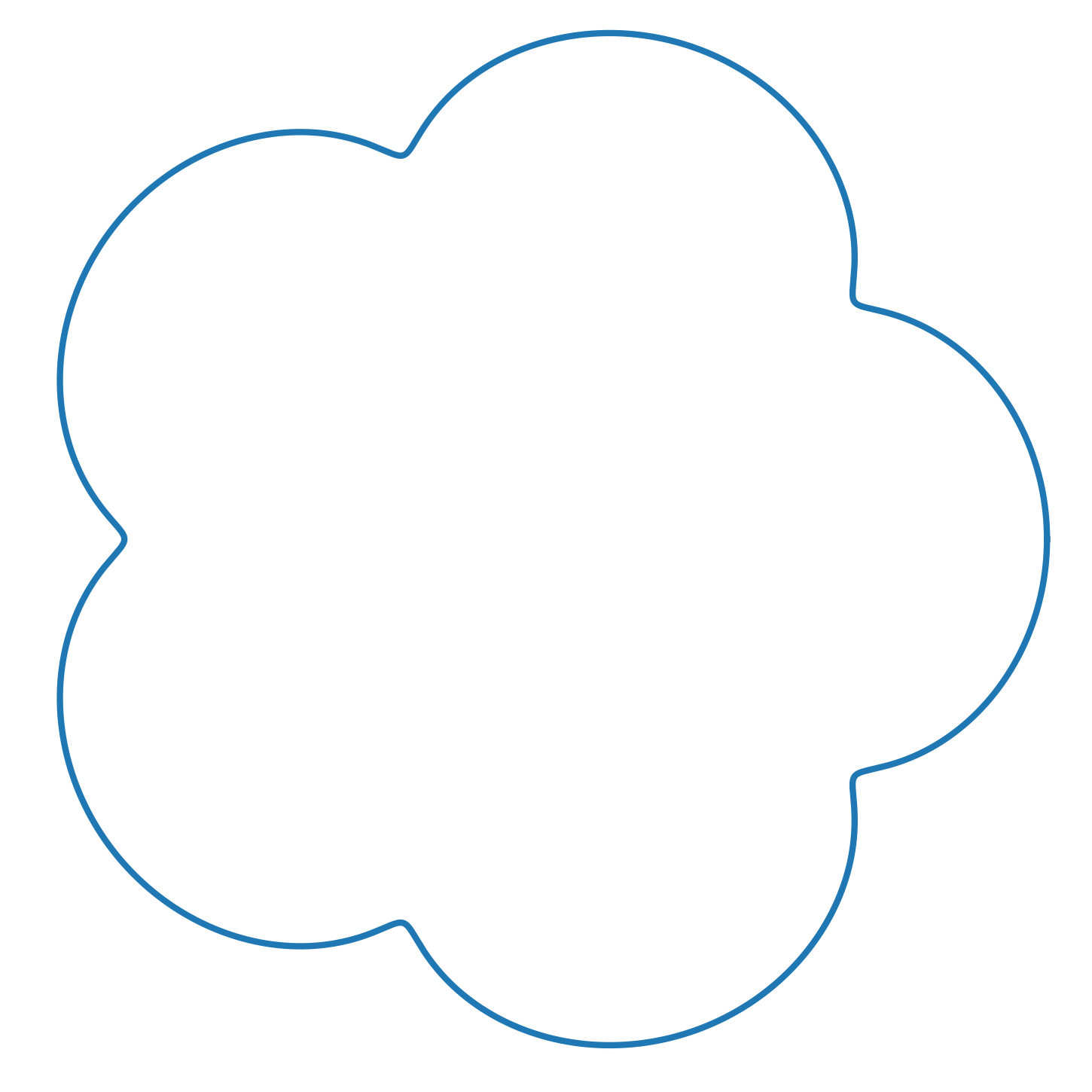} \hspace{1cm}
\includegraphics[scale=0.5]{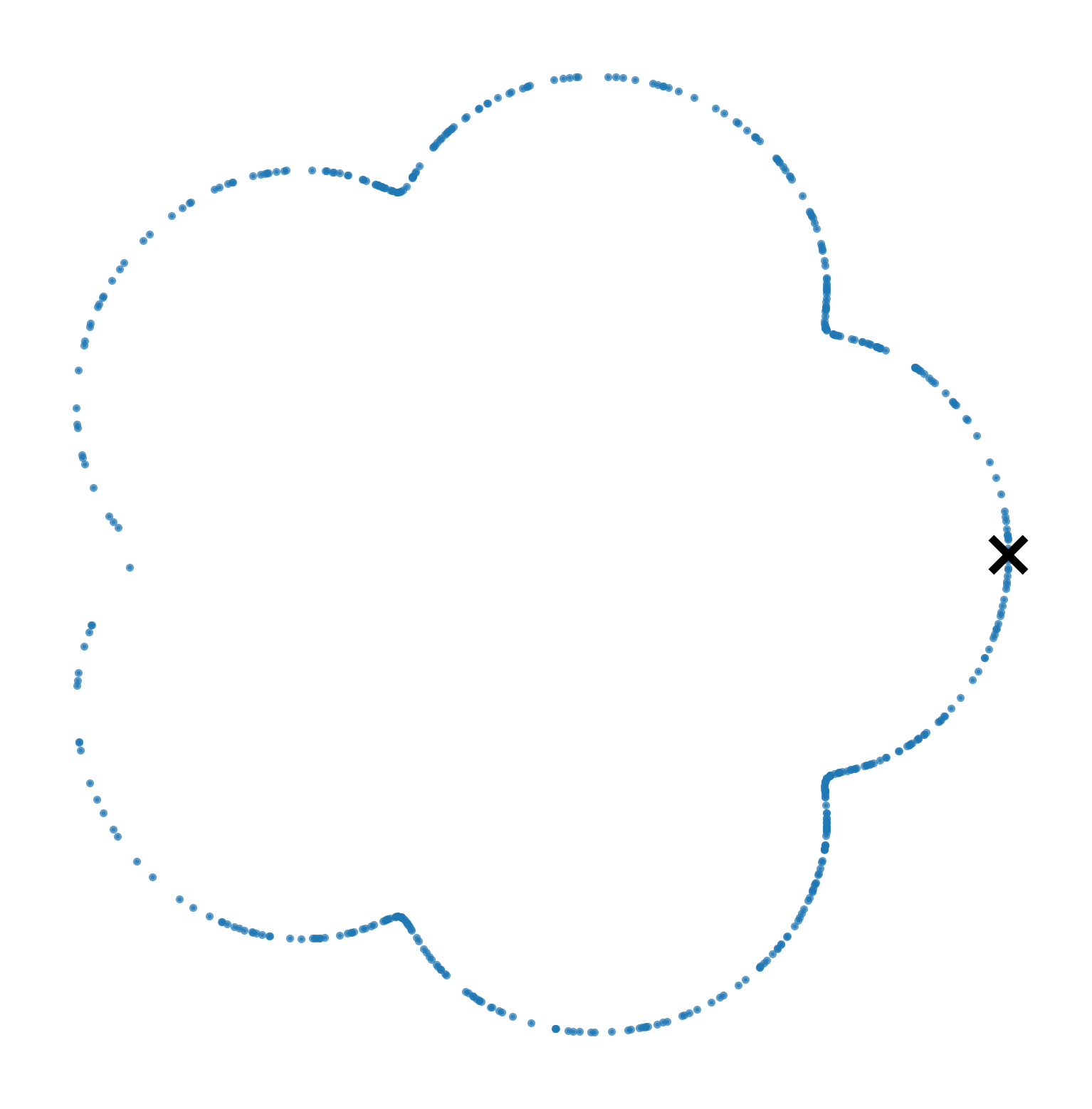}
\captionsetup{justification = centering}
\caption{
\small Plot of the submanifold M (Left) for parameters $a = 1/8$ and $\omega = 6$, and 500 points sampled independently from $\Phi_* g_\beta \cdot \mu_{\bbT^1}$ for $\beta = 3$ (Right). The black cross denotes the point $x = \Phi(0)$.
}
\label{fig:vard1}
\end{figure}
We are interested in estimating the density $f_\beta$ with respect to $d\mu_M$ of the push-forward measure $P_\beta = \Phi_* g_\beta \cdot \mu_{\bbT^1}$, at point $x = (1+a,0) \in M$. We use formula \eqref{formulaf} to compute $f_\beta(x)$: We have $\Phi^{-1}(x) = 0$ and $\|\Phi'(0)\| = 2\pi(1 + a\omega)$ hence
\beq
f_\beta(x) = \frac{C_\beta}{2\pi(1+a\omega)} \;\;\text{at}\;\;x = (1+a,0). \nonumber
\eeq

Our aim here is to provide an empirical measure for the convergence of the risk $n \mapsto \bbE_{{P_\beta}^{\otimes n}}[|\wh f_h(x) - f_\beta(x)|^p]^{1/p}$ when $h$ is tuned optimally (in an oracle way). We pick $p=2$. Our numerical procedure is detailed in Algorithm 1 below, and the numerical results are presented in \figref{msed1}.

\begin{algorithm}[ht!]  \label{algo: algo 1}
	\caption{MSE rate of convergence estimation}
	\begin{algorithmic}[1] 
		\State Provide integers $\beta \geq 1$ and $\ell \geq \beta$. 
		\State Set a grid of increasing number of points $\bn = (n_1,\dots, n_k) \in \bbN^k$ and a number of repetition $N$. 
		\For {$n_i \in \bn$} 
			\State Sample $n_i$ points independently from $P_\beta$,
			\State Compute $\widehat  f_{h_i}(x)$ with kernel $K^{(\ell)}$ and bandwidth $h_i = n_i^{-1/(2\beta+1)}$,
			\State Compute the square error $( \widehat  f_{h_i}(x) - f_\beta(x))^2$,
			\State Repeat the three previous steps $N$ times,
			\State Average the errors to get a Monte-Carlo approximation $\wh R(n_i)$ of $\bbE_{{P_\beta}^{\otimes n_i}}[|\wh f_h(x) - f_\beta(x)|^2]$. 
		\EndFor
		\State Perform an Ordinary Least Square Linear Regression on the curve $\log n_i \mapsto \log \wh R(n_i)$. 
		\State \Return The coefficient of the linear regression. 
	\end{algorithmic} \label{algo:main}
\end{algorithm}

\subsection{An example of a density supported by a two-dimensional submanifold}

We consider a non-isometric embedding of the flat torus $\bbT^2$. We first construct a density function. For and integer $\beta \geq 1$, define
\beq
G_\beta : (v,u) \in [-1/2,1/2]^2 \mapsto g_\beta(v) g_\beta(u) \noindent 
\eeq
where $g_\beta$ is defined as in \eqref{gs}. Obviously, $G_\beta$ defines a density function on $\bbT^2$ that is $\beta$-H\"older (but not $(\beta+\ve)$-H\"older for any $\ve > 0$).

\begin{figure}[ht]
\centering
\includegraphics[scale=0.7]{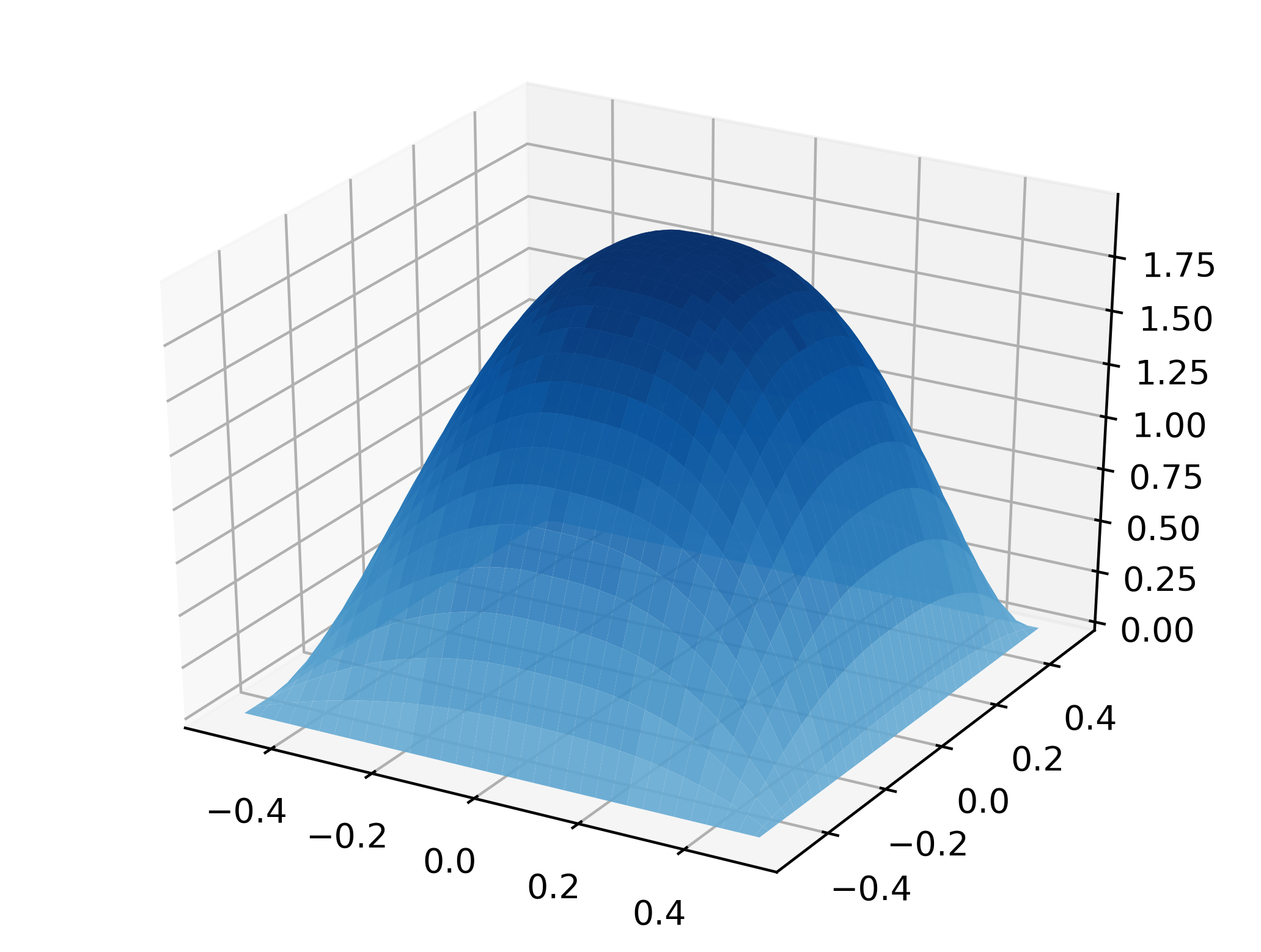}
\caption{\small Plot of the probability density function $G_\beta$ for $\beta = 3$.}
\label{fig:ggbeta}
\end{figure}

We next consider the parametric surface 
\beq \nonumber
\Psi : \begin{cases}
\bbT^2 \rightarrow \bbR^3 \\
(v,u) \mapsto \begin{pmatrix}
(b + \cos(2\pi v))\cos(2\pi u) + a \sin(2\pi\omega v) \\
(b + \cos(2\pi v))\sin(2\pi u) + a \cos(2\pi\omega v) \\
\sin(2\pi v) + a \sin(2\pi\omega u)
\end{pmatrix}
\end{cases}
\eeq
for some $a,b, \omega \in \bbR$. In the remaining of the section, we set $a = 1/8$, $b = 3$ and $\omega = 5$. We show that $\Phi$ indeed defines an embedding. See \figref{vard2} for a plot of the submanifold $M = \Psi(\bbT^2)$. For an integer $\beta \geq 1$, we denote by $F_\beta$ the density of the push forward measure $Q_\beta = \Psi_* G_\beta \cdot \mu_{\bbT^2}$ with respect to the volume measure $\mu_M$. Let $x = (b+1,a,0)$ be the image of $0$ by $\Psi$. 
\begin{figure}[ht!]
\centering
\includegraphics[scale=1]{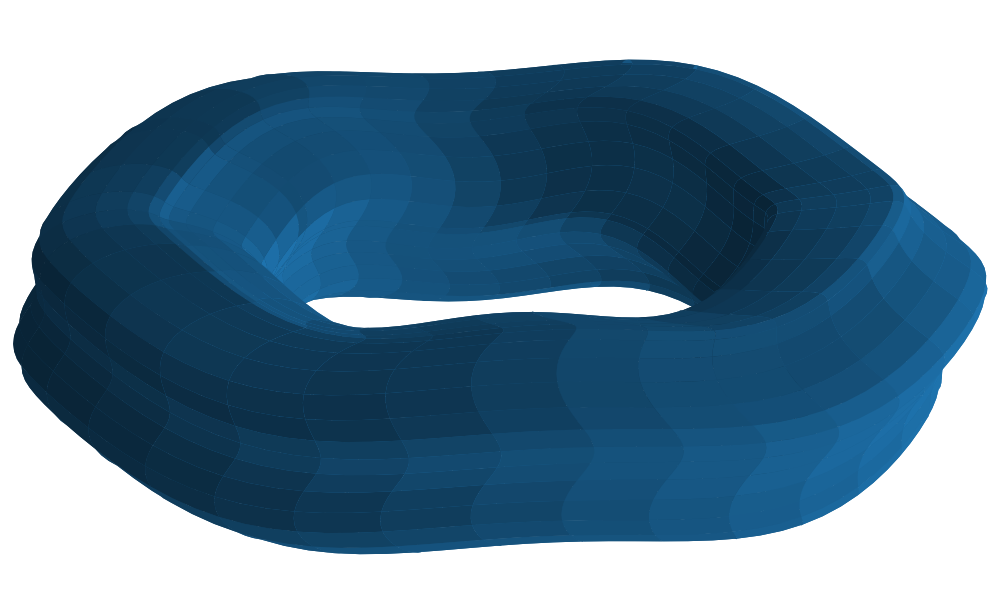} \hspace{1cm}
\includegraphics[scale=1]{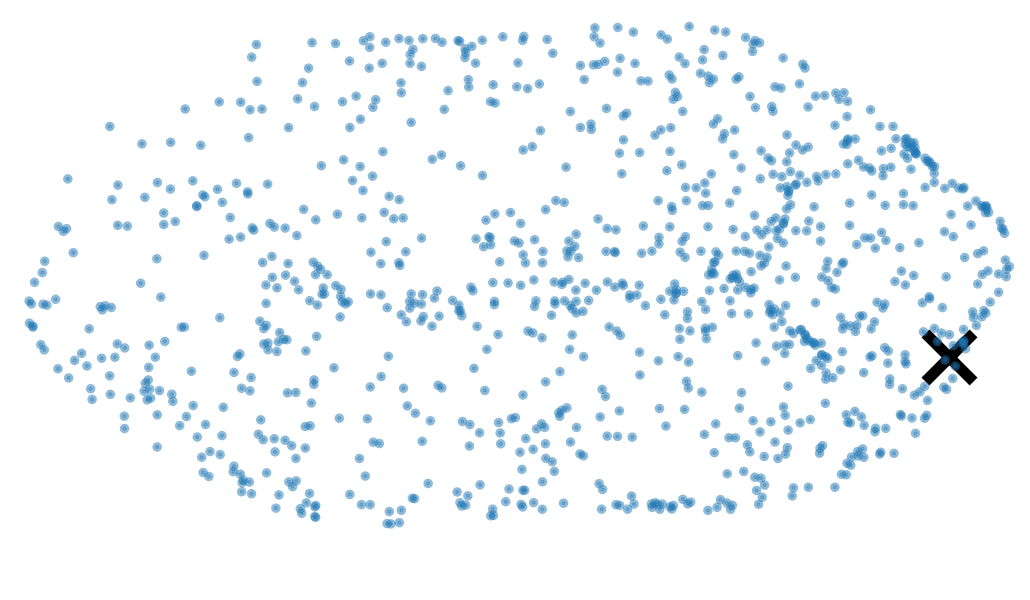}
\captionsetup{margin = 1cm}
\caption{\small Plot of the submanifold $M$ (Left) for parameters $a = 1/8$, $b = 3$ and $\omega = 6$, and 500 points sampled independently from $\Psi_* G_\beta \cdot\mu_{\bbT^2}$ with $\beta = 3$ (Right). The black cross marks the point $x$.}
\label{fig:vard2}
\end{figure}
Simple calculations show that the differential of $\Psi$ at $0$ evaluated at $e_1 = (1,0) \in T_0 \bbT^2$ and $e_2 = (0,1) \in T_0 \bbT^2$ is equal  to respectively $d\Psi(0)[e_1] = 2\pi(a\omega,0,1)$ and $d\Psi(0)[e_2] = 2\pi (0, b+1,a\omega)$. Hence formula \eqref{detform} yields 
\beq \label{detphi}\det d\Psi(0) = (2\pi)^2 \(\(1 + a^2 \omega^2\)\((b+1)^2 + a^2 \omega^2\) - a^2 \omega^2\)^{1/2} \eeq
and we obtain
\beq
F_\beta(x) = \frac{C_\beta^2}{4\pi^2} \(\(1 + a^2 \omega^2\)\((b+1)^2 + a^2 \omega^2\) - a^2 \omega^2\)^{-\frac12}. \nonumber
\eeq

In the same way as in the previous section, we aim at providing an empirical measure for the rate of convergence of the risk $\bbE_{{P_\beta}^{\otimes n}}[|\wh f_h(x) - f_\beta(x)|^2]$ when $h$ is suitably tuned with respect to $n$ and $\beta$. This is done using again \algoref{main}. The results are presented in \figref{msed1}. 

\begin{figure}[ht!]
\centering
\includegraphics[scale=0.5]{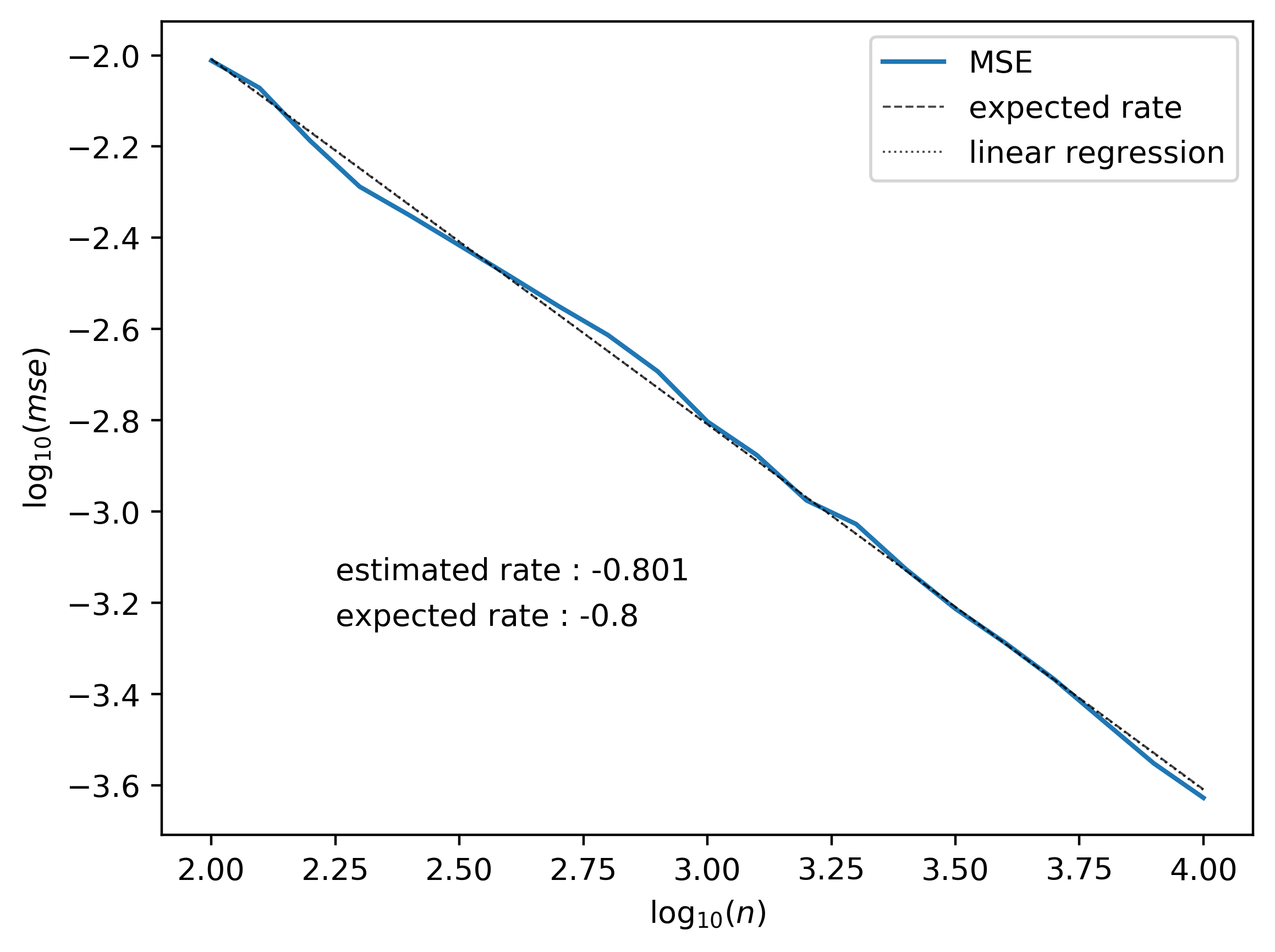}
\includegraphics[scale=0.5]{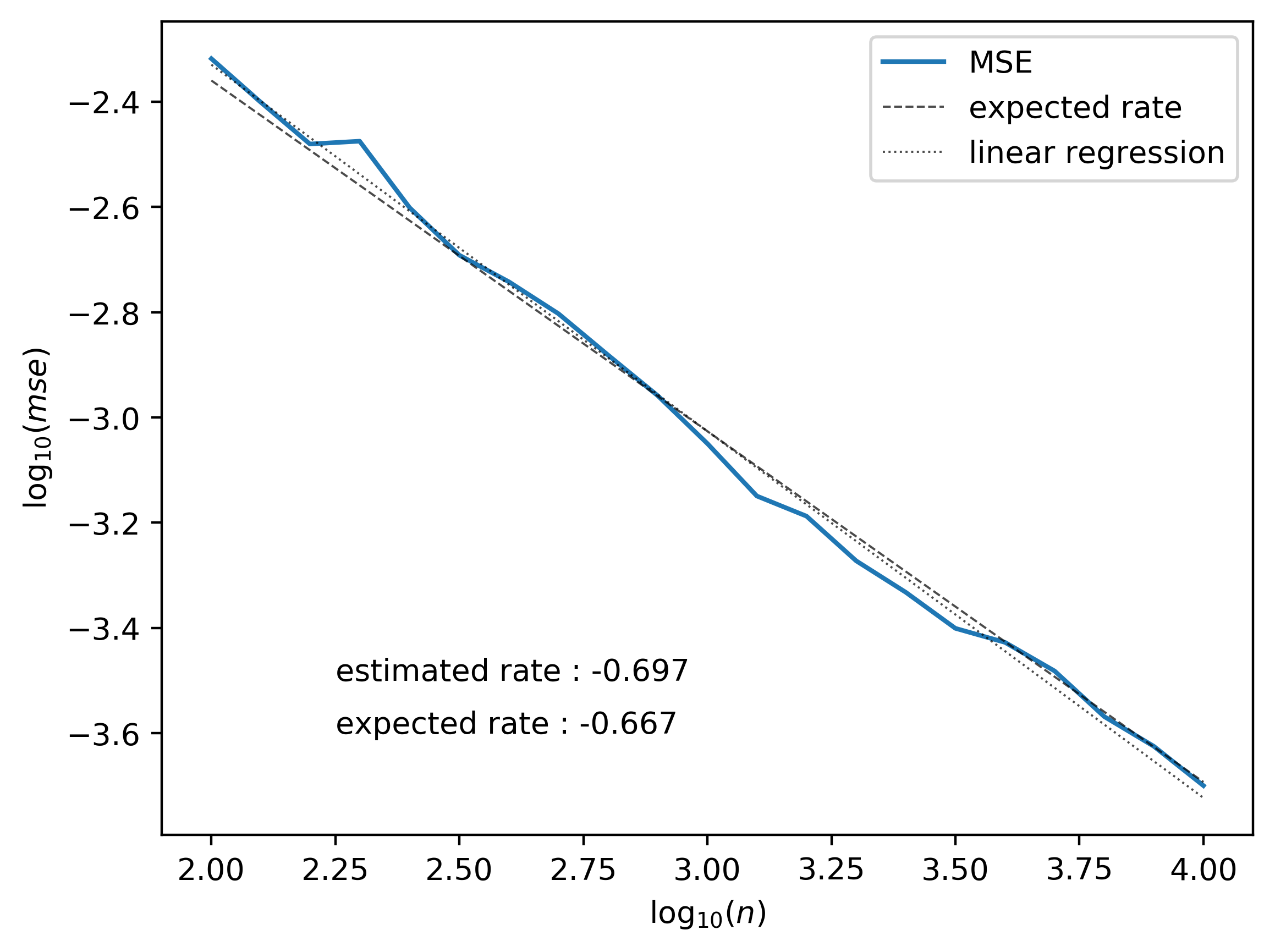}
\captionsetup{justification = justified, margin = 1.1cm}
\caption{\small Plot of the empirical mean square error (blue) for a density supported by a one-dimensional submanifold (Left) and two-dimensional submanifold (Right) with parameter $\beta = 2$. We use a log-regular grid $\bn$ of $21$ points ranging from $100$ to $10^4$. Each experiment is repeated $N = 500$ times. }
\label{fig:msed1}
\end{figure}

\subsection{Adaptation}

In this section we estimate a density when its regularity is unknown, contrary to the previous simulation where the regularity parameter $\beta$ is pugged in the bandwidth choice $n^{-1/(2\beta+d)}$. This is performed using Lepski's method presented in \secref{smooth}. The rate is computed using \algoref{main}, for both the one-dimensional and the two-dimensional synthetic datasets.

For the adaptive estimation on the two-dimensional manifold, we observe that the corrective term $\det d\Psi(0)$ computed in \eqref{detphi} results in a density $F_\beta(x)$ that is quite small, while the function $\psi$ defined at \eqref{defpsi} and used to tune the bandwidth soars dramatically because of the retained value of $\omega_d = 4^d \zeta_d \|K^{(d,\ell)}\|^2_\infty f_{\max}$, so that the values of $\hat f_h$ and $\psi(h, \cdot)$ (defined at \eqref{defpsi}) are not of the same order anymore at this scale (using maximum $10^6$ observations). To circumvent this effect, we introduce a scaling parameter $\lambda$ as follows 
$$
G_{\beta,\lambda} = v,u \mapsto \lambda^2 G_\beta(\lambda v, \lambda u).
$$ 
Like before, we consider the push-forward probability measure $\Psi_*G_{\beta,\lambda}\cdot\mu_{\bbT^2}$ which has density $F_{\beta,\lambda}$ with respect to $\mu_M$. For $\lambda = 4$, we find that $F_{\beta,\lambda}(x)$ is of order $1$ for most values of $\beta$, and we use the function $\psi^{\text{num}}(h,\eta) = \Omega^{\text{num}}(h)\lambda(h) + \Omega^{\text{num}}(\eta)\lambda(\eta)  $ using simply $\Omega^{\text{num}}(h) = \sqrt{1/nh^d}$. We have no theoretical guarantee that such a method will work but we recover nonetheless the right rate in the estimation of the value of the density, see \figref{adapt} for a plot of the estimated rate. 

We find a highly dispersive empirical error, hence our choice to represent the median of the squared error instead of the more traditional  mean squared error. 

\begin{figure}[ht!]
\centering
\includegraphics[scale=0.5]{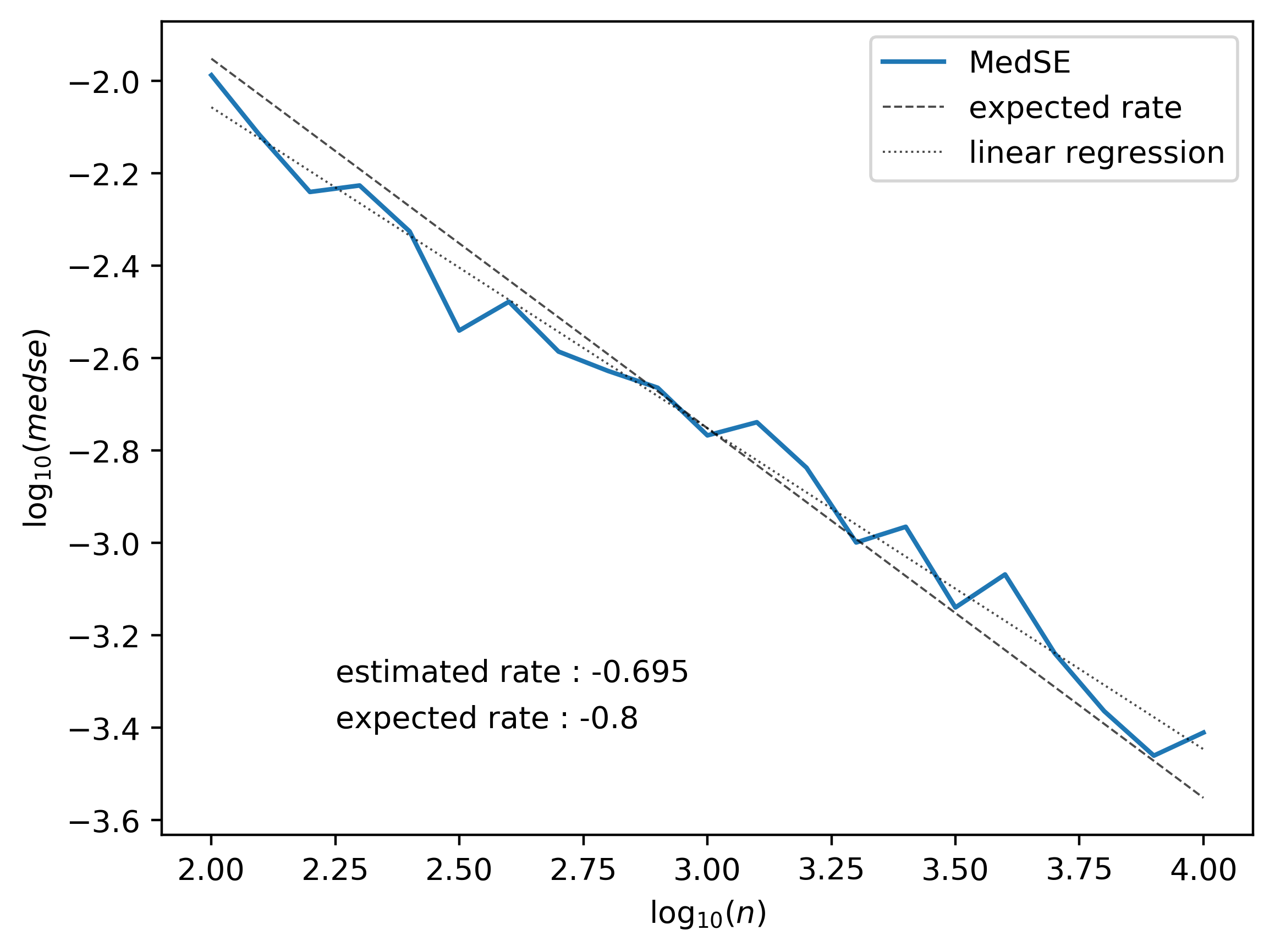}
\includegraphics[scale=0.5]{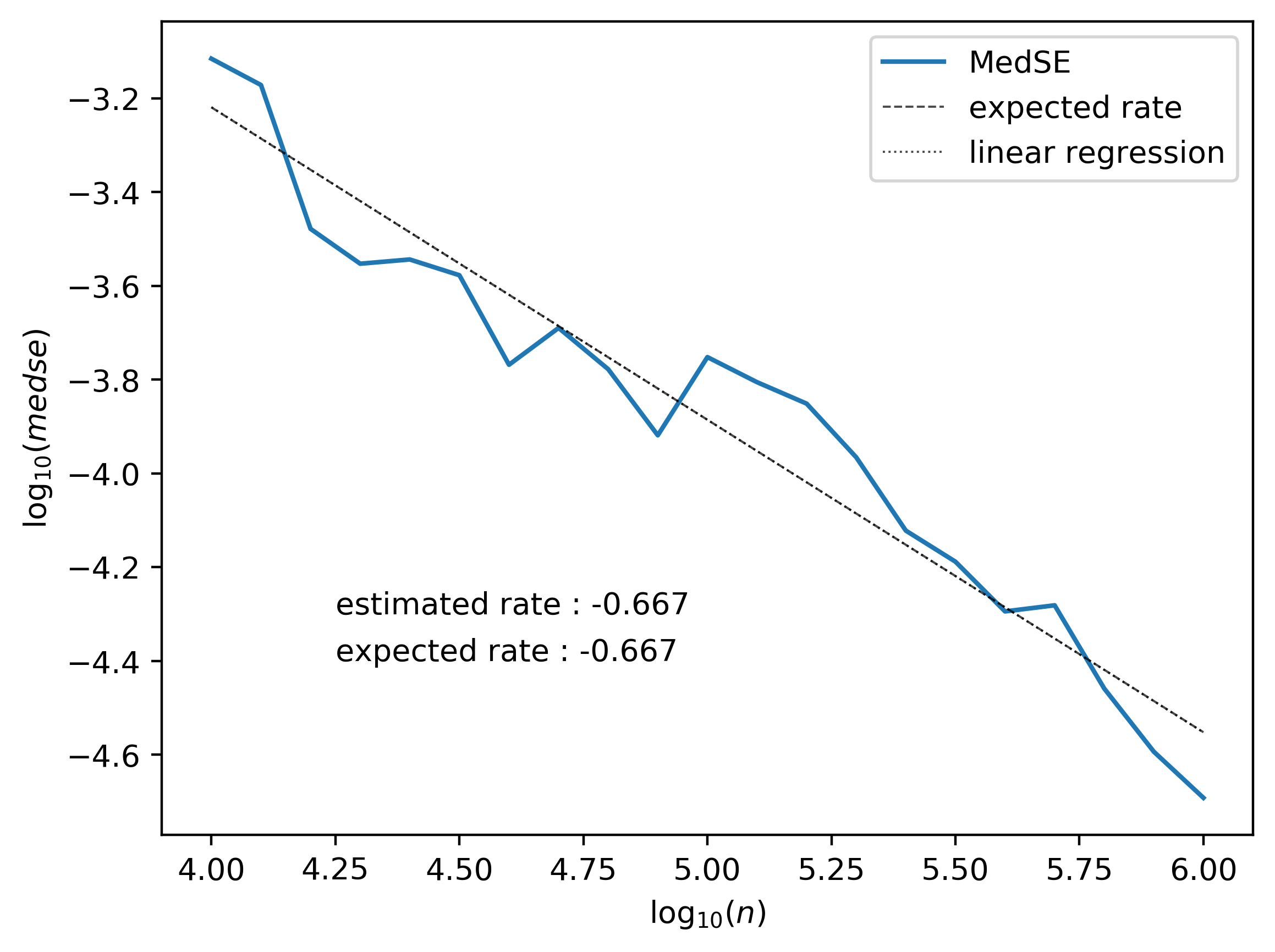}
\captionsetup{justification = justified, margin = 1cm}
\caption{\small (Left) Plot of the empirical median square error for the one-dimensional submanifold with $\beta = 2$. The bandwidth $h$ is chosen adaptively using Lepski's method of order $\ell = 3$ as in \secref{smooth}. We used a log-regular grid $\bf n$ of $11$ points ranging from $100$ to $10^4$ and each experience was repeated $N = 500$ times. (Right) Same experiment but for the two dimensional manifold, with a grid ranging from $10^4$ to $10^6$ and $N = 100$ repetitions.} 
\label{fig:adapt}
\end{figure}

 \noindent {\bf Acknowledgements} {\it We are grateful to Krishnan (Ravi) Shankar and Hippolyte Verdier for insightful discussions and comments. The valuable input of two referees is greatly acknowledged.}

\bibliographystyle{chicago}
\nocite{*}
\bibliography{ref}

 \renewcommand{\thesection}{A}
 \section{Appendix} \label{proofs}
 \addcontentsline{toc}{section}{Appendices}

\subsection{Additional results of geometry} \label{app:geo}

We first state a few classical results that we will need in the upcoming proofs. We start with a quantitative bound that link the reach to the curvature of a submanifold. We denote by $\II$ the second fundamental form.

\begin{prp} \emph{\citep[Prp. 6.1]{NSW08}}  \label{prop NSW}
Let $M$ be a compact smooth submanifold of $\bbR^D$. Then, for any $z \in M$, we have $\|\II_z\|_{\op} \leq 1/\tau_M$.
\end{prp}

Since $\II_z$ is the differential of order two of the mapping $\exp_z$ at the $0 \in T_x M$, Proposition \ref{prop NSW} has several convenient implications. First, it gives a uniform lower bound for the injectivity radii of $M$ as stated in \prpref{injexp}. Second, it also yields nice bounds on how well the Euclidean distance on $\bbR^D$ approximates the Riemannian distance $d_M$ on $M \times M$.

 \begin{prp} \label{prp:eqdist} \emph{\citep[Prp. 6.3]{NSW08}} For any compact submanifold $M$ of $\bbR^D$ and any $x,y \in M$ such that $\|x-y\| \leq \tau_M/2$, we have
 \beq 
 \|x-y\| \leq d_M(x,y) \leq \tau_M \(1 - \sqrt{1 - \frac{2 \|x-y\|}{\tau_M}}\). \nonumber
 \eeq
 \end{prp}
 
Proposition \ref{prp:eqdist} allows in turn to compare the volume measure $\mu_M$ to the Lebesgue measure on its tangent spaces. 
 
 \begin{lem} \label{lem:abstand} For any $d$-dimensional compact smooth submanifold $M$ of $\bbR^D$, for any $z \in M$ and any $\eta \leq \tau_M/2$, we have
 \beq \nonumber
(1 - \eta^2/6\tau_M^2)^d \zeta_d \eta^d \leq \mu_M(B(z,\eta)) \leq \{(1+(\xi(\eta/\tau_M )\eta)^2/\tau_M^2) \xi(\eta/\tau_M)\}^d \zeta_d \eta^d
 \eeq
where $\xi(s) = (1 - \sqrt{1 - 2 s})/s$ and $\zeta_d$ is the volume of the unit Euclidean ball in $\bbR^d$. 
 \end{lem}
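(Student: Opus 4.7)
The natural approach is to parametrize $M$ locally around $z$ via the exponential map $\exp_z$ and to split the estimate into two independent controls: a bracketing of the domain of integration, and a pointwise bracketing of the Jacobian $\sqrt{\det g^z(v)}$. Since $\eta \leq \tau_M/2 < \pi\tau_M$, \prpref{injexp} ensures that $\exp_z$ is a diffeomorphism on $B_{T_zM}(0,\pi\tau_M)$, and the intrinsic definition of the volume measure yields
\[
\mu_M(B(z,\eta)) \;=\; \int_{U_\eta} \sqrt{\det g^z(v)}\, dv, \qquad U_\eta := \exp_z^{-1}\bigl(B(z,\eta)\bigr) \cap B_{T_zM}(0,\pi\tau_M).
\]

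For the domain $U_\eta$, the exponential map is a radial isometry, so $\|v\| = d_M(z,\exp_z(v))$; combining this with the two inequalities of \prpref{eqdist}, the condition $\exp_z(v) \in B(z,\eta)$ forces $\|v\| \leq \eta\,\xi(\eta/\tau_M)$, while $\|v\| \leq \eta$ forces $\|\exp_z(v) - z\| \leq \|v\| \leq \eta$. This gives the sandwich
\[
B_{T_zM}(0,\eta) \;\subset\; U_\eta \;\subset\; B_{T_zM}\bigl(0,\eta\,\xi(\eta/\tau_M)\bigr).
\]
For the Jacobian, I would use the reach-based curvature bound $\|\II\|_\op \leq 1/\tau_M$ from Proposition~\ref{prop NSW}, which via Gauss's equation bounds the sectional curvature of $M$ by $1/\tau_M^2$. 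A standard Jacobi-field / Rauch comparison argument (applied to the normal Jacobi fields along radial geodesics from $z$) then produces pointwise estimates of the form
\[
\Bigl(\tfrac{\sin(\|v\|/\tau_M)}{\|v\|/\tau_M}\Bigr)^{d-1} \;\leq\; \sqrt{\det g^z(v)} \;\leq\; \Bigl(\tfrac{\sinh(\|v\|/\tau_M)}{\|v\|/\tau_M}\Bigr)^{d-1}
\]
valid on $B_{T_zM}(0,\pi\tau_M)$. Feeding in the elementary inequalities $\sin(s)/s \geq 1 - s^2/6$ and $\sinh(s)/s \leq 1 + s^2$ (valid on $[0,1/2]$, which is the relevant range since $\eta \leq \tau_M/2$) and integrating over the inner ball $B_{T_zM}(0,\eta)$ for the lower bound and the outer ball $B_{T_zM}(0,\eta\,\xi(\eta/\tau_M))$ for the upper bound, with a monotone bracketing of $\|v\|$ under the integral, yields precisely the two inequalities of the statement.

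The delicate step is the Jacobian bound: the sharp factor $1/6$ in the statement's lower bound tracks the expansion $\sin(s)/s = 1 - s^2/6 + O(s^4)$ and hence requires the sectional curvature bound $1/\tau_M^2$ to be sharp, rather than the weaker $2/\tau_M^2$ one would obtain from a naive use of Gauss's equation; one has to exploit the extrinsic structure carefully at this step. Everything else is routine: the domain bracketing is a direct consequence of \prpref{eqdist} and the radial-isometry property of $\exp_z$, the final integration is monotone, and the passage from exponent $d-1$ in the Rauch estimate to the exponent $d$ in the statement uses only the elementary observation that $a^{d-1} \geq a^d$ when $a \in (0,1)$ and $a^{d-1} \leq a^d$ when $a \geq 1$.
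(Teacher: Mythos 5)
Your proof is essentially correct but takes a genuinely different route for the key Jacobian estimate: where the paper simply cites \citep[Prp.~III.22.v]{Aam17} as a black box for the volume distortion of $\exp_z$, you re-derive the bounds from first principles via Rauch/G\"unther comparison, which is a legitimate and more self-contained alternative. The domain bracketing step is identical to the paper's (both rest on \prpref{eqdist}), and your conversion from exponent $d-1$ to $d$ is a correct, if slightly wasteful, weakening. Two caveats on the comparison step. First, the Gauss equation $K(X,Y)=\langle \II(X,X),\II(Y,Y)\rangle - \|\II(X,Y)\|^2$ together with $\|\II\|_{\op}\leq 1/\tau_M$ gives the one-sided bounds $-2/\tau_M^2 \leq K \leq 1/\tau_M^2$, \emph{not} $|K|\leq 1/\tau_M^2$; in codimension $\geq 2$ the lower value $-2/\tau_M^2$ is genuinely attainable. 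This means your $\sinh$ estimate should carry a $\sqrt{2}$, namely $\sqrt{\det g^z(v)} \leq \bigl(\sinh(\sqrt{2}\|v\|/\tau_M)/(\sqrt{2}\|v\|/\tau_M)\bigr)^{d-1}$; fortunately one still has $\sinh(\sqrt{2}s)/(\sqrt{2}s)\leq 1+s^2$ on the relevant range $s\in[0,1]$ (recall $\xi(\eta/\tau_M)\eta/\tau_M\leq 1$), so the stated upper bound survives. Second, your remark that the $1/6$ factor ``requires the sectional curvature bound $1/\tau_M^2$ to be sharp'' and ``one has to exploit the extrinsic structure carefully'' conflates the two signs: the lower bound on $\mu_M(B(z,\eta))$ uses the \emph{upper} curvature bound $K\leq 1/\tau_M^2$, and this is not the ``naive'' $2/\tau_M^2$ estimate but in fact follows directly from Gauss by simply dropping the nonnegative term $\|\II(X,Y)\|^2$; no extra extrinsic subtlety is needed there. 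With these corrections your argument is complete, and it has the pedagogical advantage over the paper's one-line citation of making explicit where the reach controls the comparison geometry.
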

 
 \begin{proof} This result already appears in \citep[Lem III.23]{Aam17} but we prove it here to make constants explicit. Let us denote by $\Leb$ the Lebesgue measure on $T_x M$. Using \citep[Prp III.22.v]{Aam17}, we know that, as long as $\xi(\eta/\tau_M)\eta \leq \tau_M$ (which holds if $\eta \leq \tau_M / 2$), 
 \begin{align*}
(1 - \eta^2/6\tau^2)^d \Leb(B_{T_z M}(0, \eta)) &\leq \mu_M\big(\exp_z(B_{T_z M}(0,\eta))\big) \\  &\leq  \mu_M\big(\exp_z (B_{T_z M}(0,\xi(\eta/\tau_M)\eta))\big) \\
&\leq (1 + (\xi(\eta/\tau_M)\eta)^2/\tau_M^2)^d \Leb(B_{T_z M}(0, \xi(\eta/\tau_M)\eta)).
 \end{align*}
Thanks to \prpref{eqdist}, if $\eta \leq \tau_M/2$, then $\exp_z\big( B_{T_z M}(0,\eta)\big) \subset M \cap B(x,\eta) \subset \exp_{z} \big(B_{T_z M}(0,\xi(\eta/\tau_M)\eta)\big)$. These inclusions combined with the last inequalities yield the result.
 \end{proof}

\subsection{Proof of \thmref{reachrisk}} \label{app:loss}

We go along a classical line of arguments, thanks to a Bayesian two-point inequality by means of Le Cam's lemma \citep[Lem. 1]{Yu97}, restated here in our context. For two probability measures $P_1,P_2$, we write $\tv(P_1,P_2) = \sup_{A}|P_1(A)-P_2(A)|$ for their variational distance and $H^2(P_1, P_2) = \int \(\sqrt{dP_1}-\sqrt{dP_2}\)^2$ for their (squared) Hellinger distance.
\begin{lem}{\emph{(Le Cam)}}  \label{lem:lecam} 
	For any $P_1, P_2 \in \sdab$, we have,
	\begin{align}
	\inf_{\widehat f} \sup_{P \in \sdab} \bbE_{P^{\otimes n}}[|\wh f(x) - f_P(x)|^p]^{1/p} &\geq \frac12 |f_{P_1}(x) - f_{P_2}(x)| \(1 - \tv\(P_1^{\otimes n}, P_2^{\otimes n}\)\) \label{yu} \\
	&\geq \frac12 |f_{P_1}(x) - f_{P_2}(x)| \(1 - \sqrt{2 - 2(1 - H^2(P_1,P_2)/2)^n}\). \nonumber
	\end{align}
\end{lem}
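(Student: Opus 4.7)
The plan is to execute the standard Le Cam two-point reduction, following for instance the presentation in \citet[Sec. 2.2]{Tsy08}. I would prove the first inequality from scratch via a testing argument, then derive the second inequality from the first by combining the classical bound $\tv \leq H$ with tensorization of Hellinger affinity.

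First I would fix $P_1, P_2 \in \sdab$ and set $\Delta = |f_{P_1}(x) - f_{P_2}(x)|$. Since $\{P_1,P_2\} \subset \sdab$, the sup over $\sdab$ is at least the max over the two-point family, so I only need to control $\max_{i=1,2} \bbE_{P_i^{\otimes n}}[|\wh f(x) - f_{P_i}(x)|^p]^{1/p}$. The core reduction is: given an arbitrary estimator $\wh f(x)$, build the test $\psi : (\bbR^D)^n \to \{1,2\}$ that selects the hypothesis whose density value at $x$ is closer to $\wh f(x)$. The triangle inequality forces $|\wh f(x) - f_{P_i}(x)| \geq \Delta/2$ whenever $\psi$ picks the wrong hypothesis under $P_i^{\otimes n}$. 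Markov's inequality then yields
\[
\bbE_{P_i^{\otimes n}}\bigl[|\wh f(x) - f_{P_i}(x)|^p\bigr] \geq (\Delta/2)^p\, P_i^{\otimes n}(\psi \neq i).
\]
Summing over $i=1,2$ and invoking the standard total-variation bound on testing errors, $P_1^{\otimes n}(\psi = 2) + P_2^{\otimes n}(\psi = 1) \geq 1 - \tv(P_1^{\otimes n}, P_2^{\otimes n})$ (which is immediate from the variational characterization $\tv(P,Q) = \sup_A |P(A) - Q(A)|$ applied to the event $\{\psi = 1\}$), I obtain a lower bound on $\max_{i}\bbE_{P_i^{\otimes n}}[|\wh f(x) - f_{P_i}(x)|^p]$ of the form $(\Delta/2)^p \cdot \tfrac{1}{2}(1 - \tv)$, which gives \eqref{yu} after taking $p$-th roots (up to constants that the authors absorb into the statement).

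For the second inequality I would use two standard facts. First, for any pair of probabilities on a common space, $\tv(P,Q) \leq H(P,Q)\sqrt{1 - H^2(P,Q)/4} \leq H(P,Q)$; this follows from Cauchy--Schwarz applied to $|\sqrt{dP} - \sqrt{dQ}| \cdot |\sqrt{dP} + \sqrt{dQ}|$. Second, the Hellinger affinity $\rho(P,Q) = \int \sqrt{dP\,dQ} = 1 - H^2(P,Q)/2$ tensorizes multiplicatively over product measures, so $\rho(P_1^{\otimes n}, P_2^{\otimes n}) = \rho(P_1, P_2)^n$ and hence $H^2(P_1^{\otimes n}, P_2^{\otimes n}) = 2 - 2(1 - H^2(P_1,P_2)/2)^n$. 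Composing these identities gives $\tv(P_1^{\otimes n}, P_2^{\otimes n}) \leq \sqrt{2 - 2(1 - H^2(P_1,P_2)/2)^n}$, and plugging into \eqref{yu} delivers the second inequality.

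I do not anticipate any serious obstacle: the entire statement is a classical two-point lower bound, the geometric structure of $\sdab$ plays no role (only that $P_1, P_2$ both lie in the class), and all ingredients -- Markov's inequality, triangle inequality for testing, $\tv$--$H$ comparison, and Hellinger tensorization -- are textbook results. The only minor care needed is the handling of $p$-th roots when passing from the summed risk bound to a maximum, which I would phrase using $\max(a,b) \geq (a+b)/2$ prior to taking $(\cdot)^{1/p}$.
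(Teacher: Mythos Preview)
Your proposal is correct and follows the same classical route as the paper, which simply cites \citet[Lem.~1]{Yu97} for \eqref{yu} and \citet[Lem.~2.3 and Prop.~p.83]{Tsy08} for the Hellinger step; you are just spelling out those references.

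One minor point worth flagging: the testing reduction you sketch yields, after $\max(a,b)\geq(a+b)/2$ and taking $p$-th roots, a lower bound of order $\tfrac{1}{4}\Delta(1-\tv)$ rather than the $\tfrac{1}{2}\Delta(1-\tv)$ stated in the lemma, so ``up to constants that the authors absorb'' is not quite accurate here---the constant $\tfrac{1}{2}$ is explicit. To recover it exactly (as in Yu's original argument), bypass the test and bound directly
\[
\bbE_{P_1^{\otimes n}}\bigl[|\wh f(x)-f_{P_1}(x)|\bigr]+\bbE_{P_2^{\otimes n}}\bigl[|\wh f(x)-f_{P_2}(x)|\bigr]
\;\geq\;\Delta\int (p_1\wedge p_2)\,d\nu
\;=\;\Delta\bigl(1-\tv(P_1^{\otimes n},P_2^{\otimes n})\bigr),
\]
using the pointwise triangle inequality $|\wh f-f_{P_1}|+|\wh f-f_{P_2}|\geq\Delta$; then $\max\geq\tfrac{1}{2}\sum$ and Jensen (or monotonicity of $L^p$ norms) give the $p$-th-moment version with the sharp $\tfrac{1}{2}$. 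Your treatment of the second inequality via $\tv\leq H$ and multiplicativity of the Hellinger affinity is exactly what the paper invokes.
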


\begin{proof} The proof of \eqref{yu} can be found in \citet[Lem. 1]{Yu97}.  It only remains to see that $\tv\(P_1^{\otimes n}, P_2^{\otimes n}\) \leq \sqrt{2 - 2(1 - H^2(P_1,P_2)/2)^n}$.  This comes from classical inequalities on the Hellinger distance, see \citet[Lem. 2.3 p.86]{Tsy08} and  \citet[Prp.(i)-(iv) p.83]{Tsy08}.
\end{proof}

\begin{proof}[Proof of \thmref{reachrisk}] With no loss of generality, we pick $x = 0$. We work in $\bbR^{d+1} \subset \bbR^D$, and denote $(e_1,\dots,e_{d+1})$ the canonical basis of $\bbR^{d+1}$. We consider a family of submanifolds $M_\delta \subset \bbR^{d+1}$ such that
$$
M_\delta~ \bigcap ~\{(z,t) \in \bbR^d \times \bbR~|~\| z\| \leq 1\} = O(\delta) \cup O(-\delta) 
$$
where  $O(t) = \{(z,t)~|~z \in \bbR^d, \| z\| \leq 1\}$. We do not give the construction explicitly, but refer instead to \figref{md} for a diagram of such a manifold. 

\begin{figure}[ht!]
\centering
\includegraphics[scale=.3]{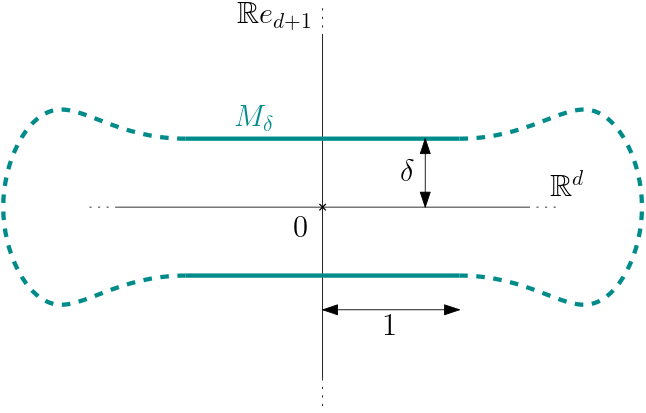}
\captionsetup{justification =  justified, margin = 1.1cm}
\caption{\small Diagram of a candidate for $M_\delta$.}
\label{fig:md}
\end{figure}

We endow each $M_\delta$ with a density $f_\delta$ such that
$$
\forall z \in O(\delta),~~f_\delta(z) = f_{\max}~~~\text{and}~~~\forall z \in O(-\delta),~~f_\delta(z) = f_{\min}
$$
and we denote $Q_{\delta} = f_\delta d\mu_{M_\delta}$. If  $f_{\min}$ is small enough (due to the constraint $\vol \supp P \leq 1/f_{\min}$ for any $P \in \sdab$) we can always choose $M_\delta$ and $f_\delta$ so that $\|\iota_{M_{\delta}}\|_{\cH^{\alpha+1}} \leq R/2$ and $\|f_\delta\|_{\cH^\beta} \leq L/2$. 

Let now $\Phi : \bbR^{d+1} \to \bbR$ be a smooth, positive, radial function with support in $B(0,1)$ with $\Phi(0) = 1$. Because the exponential map smoothly depends on the metric, for any $h < 1$, there exists $\delta_h \in (0,h)$ sufficiently small such that the push-forward measures of $Q_{\delta_h}$ through the mappings
\begin{align*} 
\Psi^+_h(z) = \Id - \delta_h \Phi\(\frac{z - \delta_h e_{d+1}}{h}\) e_{d+1} ~~~~\text{and}~~~~\Psi^-_h(z) = \Id + \delta_h \Phi\(\frac{z + \delta_h e_{d+1}}{h}\) e_{d+1}
\end{align*} 
are both in $\sdab$. We write $N_h^\pm = \Psi^\pm_h(M_{\delta_h})$, $P_h^\pm = \(\Psi^\pm_h\)_* (Q_{\delta_h})$ and $g_h^\pm$ for the continuous version of the density $dP_h^\pm/d\mu_{N_h^\pm}$. See \figref{nh} for a diagram of $N_h^+$ and $N_h^-$. 

\begin{figure}[ht!]
\centering
\includegraphics[scale=.3]{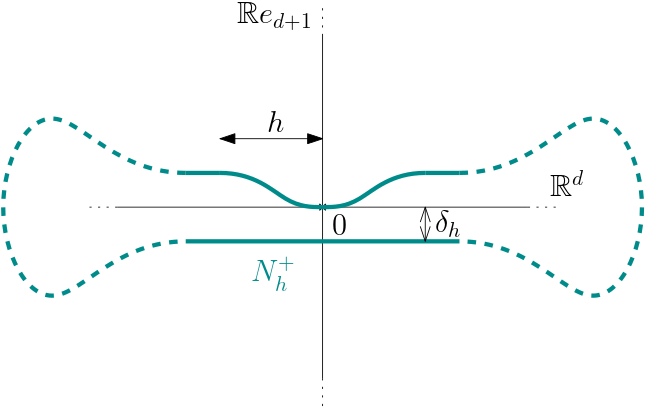} \hspace{1cm}
\includegraphics[scale=.3]{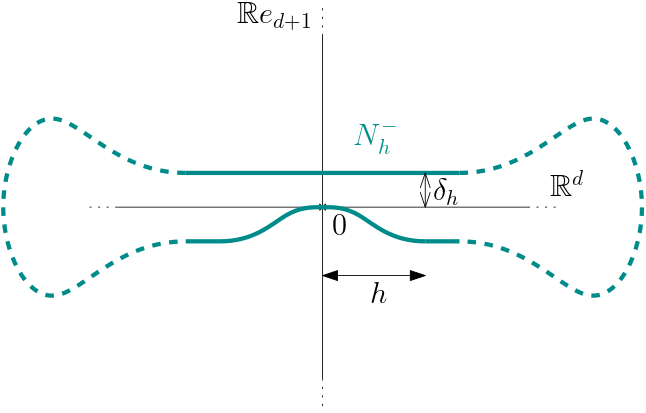}
\captionsetup{justification =  justified, margin = 1.1cm}
\caption{\small Diagram of manifolds $N_h^+$ (Left) and $N_h^-$ (right).}
\label{fig:nh}
\end{figure}

Using \lemref{lecam}, we obtain
$$
\inf_{\widehat f} \sup_{P \in \sdab} \bbE_{P^{\otimes n}}[|\wh f(0) - f_P(0)|^p]^{1/p} \geq \frac12 |g_h^+(0) - g_h^-(0)| \(1 - n \tv\(P_h^{+}, P_h^{-}\)\). 
$$
But now $g_h^+(0) = f_{\delta_h} (\delta_h e_{d+1}) \times | \det d\Psi_h^+(\delta_h e_{d+1})|^{-1} = f_{\max}$ and, likewise, $g_h^-(0) = f_{\min}$. As for the total variation distance, we get
\begin{align*} 
\tv\(P_h^{+}, P_h^{-}\) &= P_h^+(\Psi_h^+(h O(\delta_h)) + P_h^+(h O(-\delta_h))  + P_h^-(\Psi_h^-(hO(-\delta_h)) + P_h^-(hO(\delta_h)) \\
 &= 2 \vol (h O(\delta_h)) \times \(f_{\min} + f_{\max}\) = 2 \zeta_d h^d \(f_{\min} + f_{\max}\) 
\end{align*}
where we recall that $\zeta_d$ is the volume of the $d$-dimensional unit-ball. Putting all the estimates together, we conclude
$$
\inf_{\widehat f} \sup_{P \in \sdab} \bbE_{P^{\otimes n}}[|\wh f(0) - f_P(0)|^p]^{1/p} \geq \frac12 (f_{\max} - f_{\min}) \(1 - 2 n  \zeta_d h^d \(f_{\min} + f_{\max}\)\).
$$ 
Letting $h$ goes to $0$  yields the result.
\end{proof}

\subsection{Proof of \thmref{lower}} \label{app:prooflower}

\begin{proof}[Proof of \thmref{lower}] Suppose without loss of generality that $x = 0$ and consider a smooth submanifold $M$ of $\bbR^{d+1} \subset \bbR^D$ that contains the disk $B_{\bbR^d}(0,1) \subset \bbR^d \times \{0_{\bbR^{D-d}}\}$ with reach greater than $\tau$, see \figref{lbound} for a diagram of such an $M$. By smoothness and compacity of $M$, there exists $L_*$ (depending on $\tau$) such that $M \in \cC_{d,\alpha}(\tau,L_*)$. Let $P$ be the uniform probability measure over $M$, with density $f : x \mapsto1/\vol M$. We have $P \in \sdab$ as long as $L^* \leq L$ and $f_{\min} \leq 1/\vol M \leq f_{\max}$ an assumption we make from now on. For $0 < \delta \leq 1$, let  $P_\delta = f_\delta \cdot d \mu_M$ with
$$
f_\delta(y) = \begin{cases} f(y) + \delta^{\beta} G(y/\delta)~~~\text{if}~~y\in B(0,\delta) \\
f(y)~~~\text{otherwise}
\end{cases}
$$
with $G : \bbR^d \rightarrow \bbR$ a smooth function with support in $B_{\bbR^d}(0,1)$ and such that $\int_{\bbR^d} G(y)dy = 0$. We pick $G$ such that $f_\delta \in \cF_\beta$ for small enough $\delta$, depending on $\tau$. Such a $G$ can be chosen to depend on $R$ only.

\begin{figure}[ht!]
\centering
\includegraphics[scale=.12]{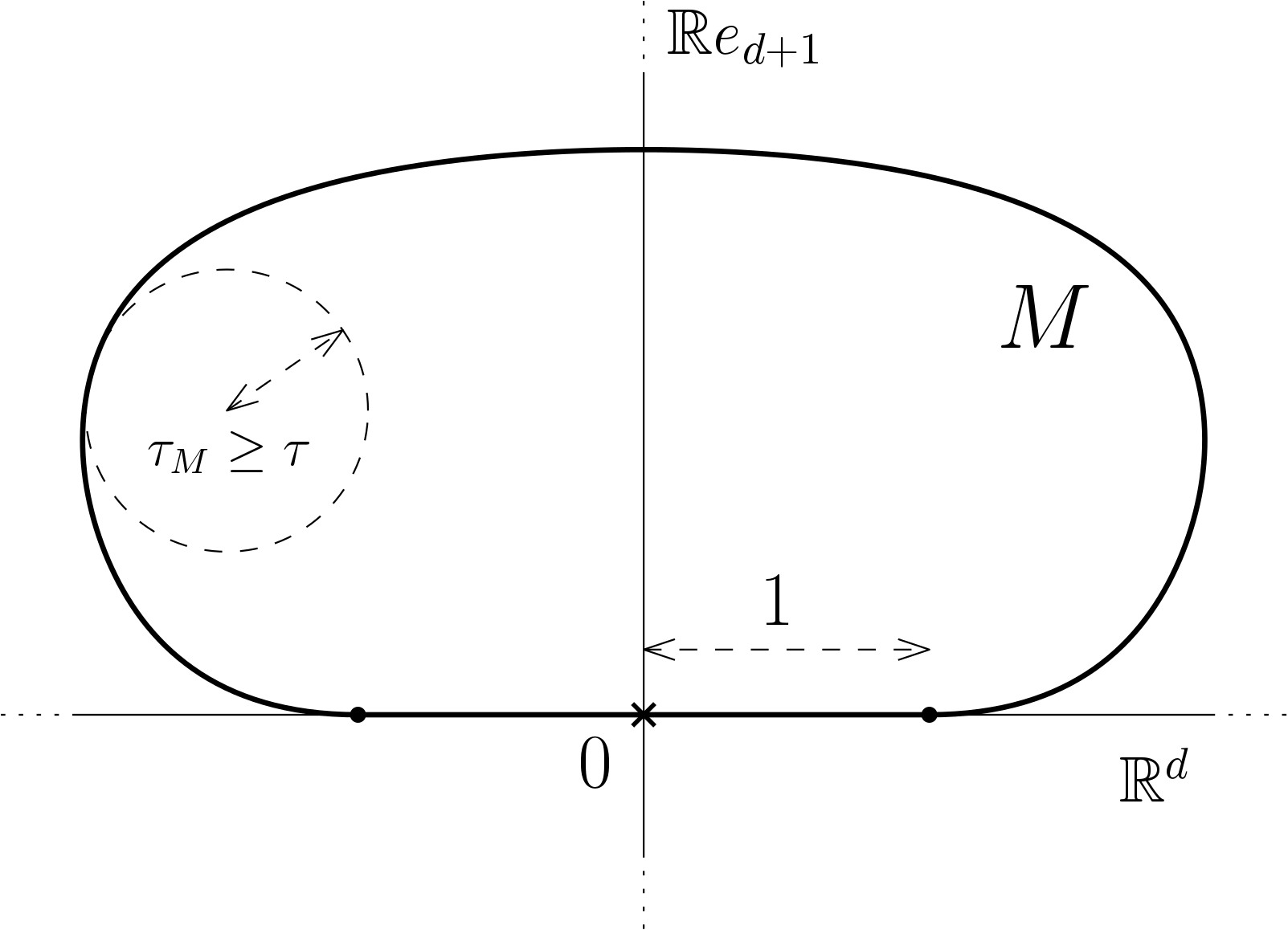}
\includegraphics[scale=.12]{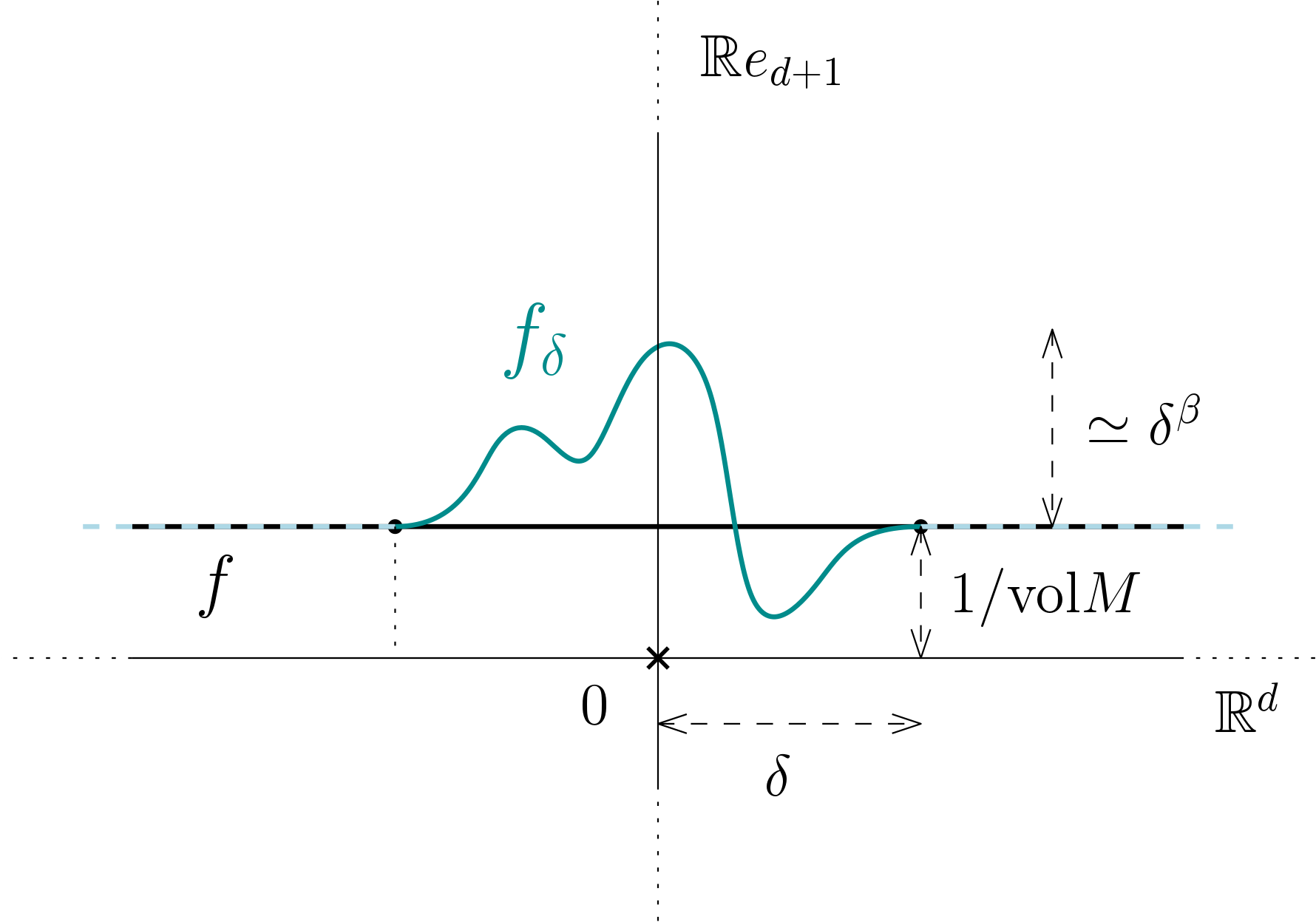}
\captionsetup{justification =  justified, margin = 1.1cm}
\caption{\small Diagram of a candidate for $M$ (Left) and of the densities $f$ and $f_\delta$ around $0$ (Right).}
\label{fig:lbound}
\end{figure}

For $\delta$ small enough (depending on $\tau$), we thus have $P_\delta \in \sdab$ as well. By \lemref{lecam}, we infer
\beq
\inf_{\widehat f} \sup_{P \in \sdab} \bbE_{P^{\otimes n}}[|\wh f(x) - f_P(x)|^p]^{1/p}  \geq \tfrac12 \delta^\beta |G(0)| \(1 - \sqrt{2 - 2(1 - H^2(P,P_\delta))^n}\) \nonumber
\eeq
so that it remains to compute $H^2(P,P_\delta)$. We have the following bound
\begin{align*}
H^2(P,P_\delta) &= \int_{B_{\bbR^d}(0,\delta)} (1 - \sqrt{1 + \vol M \delta^\beta G(z /\delta)})^2 dz \\
&\leq  \int_{B_{\bbR^d}(0,\delta)}  (\vol M)^2 \delta^{2\beta} G^2(z /\delta)dz \\
&\leq (C \vee 1) \delta^{2\beta+d}
\end{align*}
with $C  = \vol M \times \int_{B(0,1)} G(z)^2 dz$ depending on $\tau$ and $R$ only. Taking $\delta = (1/(C\vee 1) n)^{1/(2\beta+d)}$ we obtain, for large enough $n$ (depending on $\tau$)
\begin{align*}
\inf_{\widehat f} \sup_{P \in \sdab} \bbE_{P^{\otimes n}}[|\wh f(x) - f_P(x)|^p]^{1/p}   \geq \tfrac12 \big((C \vee 1) n\big)^{-\beta/(2\beta+d)} \sqrt{2 - 2(1 - 1/n)^n} \geq C_* n^{-\beta/(2\beta+d)},
\end{align*}
with $C_* = (C \vee 1)^{-1/2}$ depending on $\tau$ and $R$.
\end{proof}

\subsection{Proofs of \secref{ker}} \label{app:ker}

We set $K_h(z) = h^{-d}K(z/h)$ and start with bounding the variance of $K_h(X-x)$ when $X$ is distributed according to $P \in \sdab$. Let first observe that 
\beq
|K_h(X-x)| \leq \frac{\|K\|_\infty}{h^d} \ind_{B_D(x,h)}(X) \leq \frac{\|K\|_\infty}{h^d} \label{c}
\eeq

\begin{lem} \label{lem:var} For any $P \in \sdab$ and for any $h  \leq \tau/2$,
\beq
\Var_P(K_h(X-x)) \leq \frac{\omega}{h^d}~~~\text{where}~~\omega = 4^d \zeta_d \|K\|_\infty^2 b. \nonumber
\eeq
with $\zeta_d$ being the volume of the unit ball in $\bbR^d$. 
\end{lem}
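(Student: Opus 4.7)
The plan is to simply bound the variance by the second moment and then exploit the compact support of the kernel together with the volume bound of \lemref{abstand}. First I would write
\begin{align*}
\Var_P\bigl(K_h(X-x)\bigr) \leq \bbE_P\bigl[K_h(X-x)^2\bigr] &= \frac{1}{h^{2d}} \int_{\bbR^D} K\bigl((z-x)/h\bigr)^2\, dP(z) \\
&\leq \frac{\|K\|_\infty^2}{h^{2d}}\, P\bigl(B(x,h)\bigr),
\end{align*}
using that $K$ vanishes outside $B(0,1)$, so the integrand is supported in $B(x,h)$.

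Next, since $P \in \sdab$ has density $f_P \leq f_{\max}$ with respect to $\mu_{M_P}$, I would estimate $P(B(x,h)) \leq f_{\max}\, \mu_{M_P}(B(x,h))$, and invoke \lemref{abstand} with $\eta = h \leq \tau/2 \leq \tau_{M_P}/2$ to get
$$
\mu_{M_P}\bigl(B(x,h)\bigr) \leq \bigl\{(1 + (\xi(h/\tau) h)^2/\tau^2)\, \xi(h/\tau)\bigr\}^d \zeta_d h^d,
$$
where $\xi(s) = (1-\sqrt{1-2s})/s$.

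The only quantitative check is that the bracket above is at most $4$ when $h \leq \tau/2$. Since $s \mapsto \xi(s)$ is increasing on $(0,1/2]$ with $\xi(1/2) = 2$, we have $\xi(h/\tau) \leq 2$, and consequently $(\xi(h/\tau) h)^2/\tau^2 \leq 4(h/\tau)^2 \leq 1$, so the factor in braces is bounded by $2 \cdot 2 = 4$. Combining these estimates yields
$$
\mu_{M_P}\bigl(B(x,h)\bigr) \leq 4^d \zeta_d h^d,
\qquad
\Var_P\bigl(K_h(X-x)\bigr) \leq \frac{4^d \zeta_d \|K\|_\infty^2 f_{\max}}{h^d} = \frac{\omega}{h^d},
$$
which is the desired bound. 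There is no real obstacle here: the entire argument reduces to a volume comparison on $M_P$, which is precisely what \lemref{abstand} delivers under the reach condition $\tau_{M_P} \geq \tau$; the constraint $h \leq \tau/2$ is exactly what is needed to apply that lemma and to control the distortion factor by the explicit constant $4^d$.
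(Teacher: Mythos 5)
Your proof is correct and matches the paper's argument essentially line for line: bound the variance by the second moment, use that $K$ vanishes outside $B(0,1)$ to localize to $B(x,h)$, bound $P(B(x,h))$ by $f_{\max}\,\mu_{M_P}(B(x,h))$, and apply Lemma~\ref{lem:abstand}. The only difference is that you spell out the monotonicity check showing the distortion factor is at most $4^d$ for $h \leq \tau/2$, which the paper folds into the terse remark ``with $\eta = \tau/2$''; this is a minor expository improvement, not a different route.
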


\begin{proof} We have
\begin{align}
\Var_P(K_h(X-x)) \leq \bbE_P[ K_h(X-x)^2 ] \leq \frac{\|K\|_\infty^2}{h^{2d}} P(B(x,h)) \leq  \frac{4^d \zeta_d f_{\max} \|K\|_\infty^2}{h^d} \nonumber
\end{align}
where we used \eqref{c} and \lemref{abstand} with $\eta = \tau/2$. 
\end{proof}

Using Bernstein inequality \citep[Thm. 2.10 p.37]{BLM13}, for any $P \in \sdab$ and any $t > 0$,  we infer 
\beq \label{bern}
\mathbb P\(\big|\widehat  \xi_h(P,x)\big| \geq \sqrt{\frac{2\omega t}{nh^d}} + \frac{\|K\|_\infty t}{nh^d} \)  \leq 2e^{-t}, 
\eeq
where $\mathbb P$ is a short-hand notation for the distribution $P^{\otimes n}$ of the $n$-sample $X_1,\ldots, X_n$ taken under $P$. The bound \eqref{bern} is the main ingredient needed to bound the $L_p$-norm of the stochastic deviation of $\widehat f_h$.

\begin{proof}[Proof of \prpref{sd}] We denote by $u_+ = \max\{u,0\}$ the positive part of a real number $u$. We start with 
\beq
\bbE_{P^{\otimes n}}\big[ |\widehat  \xi_h(P,x)|^p \big]\leq 2^{p-1} \(\Omega(h)^p + \bbE_{P^{\otimes n}}\big[\big(|\widehat  \xi_h(P,x)|-\Omega(h)\big)^p_+\big]\). \nonumber
\eeq
The first term has the right order. For the second one, we make use of \eqref{bern} to infer
\begin{align*}
\bbE_{P^{\otimes n}} \big[\big(|\widehat  \xi_h(P,x)|-\Omega(h)\big)^p_+\big] &=  \int_0^\infty \bbP\(|\widehat  \xi_h(P,x)| > \Omega(h) + u\)  pu^{p-1} du \\
&= p\Omega(h)^p \int_0^\infty \bbP\( |\widehat  \xi_h(P,x)| > \Omega(h)(1 + u) \)  u^{p-1} du \\
&\leq  p\Omega(h)^p \( 1 + \int_1^\infty \bbP\( |\widehat  \xi_h(P,x)| > \sqrt{\frac{2\omega(1+u)}{nh^d}} + \frac{\|K\|_\infty (1+u)}{nh^d} \)  u^{p-1} du \) \\
&\leq p\Omega(h)^p \( 1 + \int_1^\infty 2 e^{-1-u} u^{p-1} du \) \\
& \leq p\Omega(h)^p (1 + \Gamma(p))
\end{align*}
which ends the proof. 
\end{proof}

The proof of \lemref{expans} partly relies on the following elementary lemma.

\begin{lem} \label{lem:tech}
Let $\gamma \geq 0$ be a real number and let $g : \bbR^m \rightarrow \bbR$ for $m \in \bbN^*$ satisfying that $\|g\|_\infty \leq b$ and that the restriction of $g$ to $B(0,r)$ (denoting here the open ball in $\bbR^m$) is $\beta$-H\"older, meaning that
\beq
\forall v,w \in B(0,r),~~~~\|d^k g(v) - d^k g(w)\| \leq A \|v -w\|^\delta \nonumber
\eeq
for some $A > 0$ with $k = \ceil{\gamma -1}$ and $\delta = \gamma - k$. Then there exists a constant $C$ (depending on $m,\gamma,r, b$ and $A$, and depending on $m$ and $\gamma$ when $r = \infty$) such that, for all $1 \leq j \leq k$,
\beq
\sup_{v \in B(0,r/2)} \|d^j g(v)\|_{\op} \leq C b^{1-j/\gamma} A^{j/\gamma}. \nonumber
\eeq
\end{lem}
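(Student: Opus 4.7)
The plan is a standard Landau--Kolmogorov / Gagliardo--Nirenberg interpolation argument: reduce to one dimension by slicing along unit directions, apply Taylor's theorem with a H\"older remainder, extract the intermediate derivatives by inverting a Vandermonde system, and optimize in the step size.

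First I would fix $v \in B(0,r/2)$ and a unit vector $u \in \bbR^m$, and set $\phi(t) = g(v+tu)$. Then $\phi^{(i)}(0) = d^i g(v)[u,\dots,u]$ for $0 \le i \le k$, and the polarization identity for symmetric multilinear forms controls $\|d^j g(v)\|_{\op}$ by a constant (depending only on $j$) times $\sup_{\|u\|=1}|\phi^{(j)}(0)|$. It is therefore enough to bound $|\phi^{(j)}(0)|$ uniformly in $u$.

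Next, for $|t| \le r/2$ the segment $v + [0,t]u$ lies in $B(0,r)$, so the H\"older bound on $d^k g$ translates, via Taylor's formula to order $k$ and the identity $\phi^{(k)}(\xi) - \phi^{(k)}(0) = d^k g(v+\xi u)[u,\dots,u] - d^k g(v)[u,\dots,u]$, into
$$\phi(t) = \sum_{i=0}^{k} \frac{t^{i}}{i!}\phi^{(i)}(0) + R(t), \qquad |R(t)| \le \frac{A}{k!}|t|^{\gamma}.$$
I would then choose $\epsilon \in (0, r/(2(k+1))]$ and apply this identity at the $k+1$ nodes $t_\ell = \ell\epsilon$, $\ell = 0,\dots,k$. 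The resulting linear system in the unknowns $(\phi^{(i)}(0))_{0 \le i \le k}$ is Vandermonde, and by scale invariance its inverse has entries of size $\epsilon^{-i}$ (with numerical constants depending only on $k$, hence on $\gamma$). Combined with $|\phi(t_\ell)| \le b$ and the remainder bound above, this gives, for each $1 \le j \le k$,
$$|\phi^{(j)}(0)| \le C_1(\gamma)\,\frac{b}{\epsilon^{j}} + C_2(\gamma)\,A\,\epsilon^{\gamma - j}.$$

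Finally I would optimize in $\epsilon$. The natural choice $\epsilon = (b/A)^{1/\gamma}$ balances both terms and yields the announced bound $C\,b^{1-j/\gamma}A^{j/\gamma}$ with $C$ depending only on $m,\gamma$, which handles the case $r = \infty$. When $r < \infty$ and $(b/A)^{1/\gamma} > r/(2(k+1))$, I would instead set $\epsilon = r/(2(k+1))$; the two terms are then bounded by constants depending on $m,\gamma,r,b,A$, and I would absorb the mismatch into $C$, using the fact that $C$ is allowed to depend on $r,b,A$ in this regime per the statement of the lemma.

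No substantial obstacle is expected: the argument is purely real-variable and one-dimensional in spirit, and the only bookkeeping lies in verifying that the Vandermonde inversion produces precisely the scaling $\epsilon^{-j}$, which is exactly what makes the Gagliardo--Nirenberg exponents $1 - j/\gamma$ and $j/\gamma$ emerge after optimization.
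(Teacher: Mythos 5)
Your proposal is correct and follows essentially the same route as the paper's proof: slice along a unit direction, write the Taylor expansion of order $k$ with a H\"older remainder, invert the resulting Vandermonde system to isolate $\phi^{(j)}(0)$, and choose the step size $\epsilon \sim (b/A)^{1/\gamma}$ (the paper fixes $h = (2bk!/A)^{1/\gamma}$ directly, which is the same balancing), with the supremum over unit directions handled by polarization. The only cosmetic difference is that you cap $\epsilon$ explicitly at $r/(2(k+1))$ in the finite-$r$ regime whereas the paper shrinks the Vandermonde nodes $a_1,\dots,a_k$ instead; both lead to the same dependence of the constant on $r,b,A$.
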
 
\begin{proof} 
Let $v \in B(0,r/2)$. Since $g$ is $\gamma$-H\"older on $B(0,r)$, we know that there exists a function $R_v$ such that, for any $z$ such that $v+z \in B(0,r)$, we have
\beq
g(v + z) - \sum_{j=0}^{k} \frac{1}{j!}d^j g(v)[z^{\otimes k}] = R_v(z) \nonumber
\eeq
with $|R_v(z)| \leq A \|z\|^{\beta} / k!$. Let $h = (2b{k}!/A)^{1/\gamma}$, and $z_0 \in \bbR^m$ be unit-norm. Pick $a_1,\dots,a_k \in (0,1)$ all distincts and small enough such that $h a_k z_0 \in B(0,r/2)$ for all $k$ (if $r = \infty$, then we can pick the $a_i$ independently from $A$, $b$ and $\gamma$). Introducing the vectors of $\bbR^k$
\begin{align*} 
X &= \(h dg(v)[z_0],\dots, \frac{h^{k}}{k!} d^{k} g(v)[z^{\otimes k}_0] \) ~~~~\text{and} \\
Y &= \(g(v + ha_1z_0) - g(v) - R_v(ha_1z_0),\dots, g(v + ha_k z_0) - g(v) - R_v(ha_k z_0)\)
\end{align*}
we have $Y = VX$ with $V$ being the Vandermonde matrix associated with the real numbers $(a_1,\dots,a_k)$. The former being invertible, we have $\|X\| \leq \|V^{-1}\|_{\op} \|Y\|$ and thus, for any $1 \leq j \leq k$
\beq
\left|\frac{h^j}{j!} d^j g(v)[z_0^{\otimes k}]\right| \leq \|V^{-1}\|_{\op} \( 2b + \frac{A}{k !}  h^\gamma \). \nonumber
\eeq
Substituing the value of $h$ and noticing that the former inequality holds for every unit-norm vector $z_0$, we can conclude.

\end{proof}

\begin{proof}[Proof of \lemref{expans}] 

We set $B_h = B(x,h)$. Since $\tau/2$ is smaller than the injectivity radius of $\exp_x$ (see \prpref{injexp}) we can write
\beq \label{int1}
f_h(P,x) =\int_{B_h} K_h\(p-x\)f(p) d\mu_M(p) = \int_{\exp_x^{-1}B_h} K_h(\exp_x v - x) f(\exp_x v) \zeta(v) dv
\eeq
with $\zeta(v) = \sqrt{\det g^x(v)}$. We set $\gamma = \alpha \wedge \beta$ and $k = \ceil{\gamma-1}$. Let $F$ denote the map $f \circ \exp_x$. For $h$ smaller than $\tau/2$, we have $\exp_x^{-1} B_h \subset B_{T_x M}(0,2h) \subset B_{T_x M}(0,\tau)$ (see \prpref{eqdist}). We can thus write the following expansion, valid for all $v \in \exp_x^{-1} B_h$ and all $w \in T_x M$,
\begin{align} \label{dev1}
\exp_x(v) = x + v + \sum_{j=2}^{k+1} \frac{1}{j!} d^j \exp_x(0)[v^{\otimes j}] + R_1(v)~~~&\text{with}~~\|R_1(v)\| \leq C_1 \|v\|^{\gamma+1}, \\
F(v) = f(x) + \sum_{j=1}^{k}  \frac{1}{j!} d^j F(0)[v^{\otimes j}] + R_2(v)~~~&\text{with}~~|R_2(v)| \leq C_2 \|v\|^{\gamma},  \\
K(v + w) =  \ind_{\{\|v+w\|\leq 1\}} \( K(v) +  \sum_{j=1}^{k}  \frac{1}{j!} d^j K(v)[w^{\otimes j}] + R_3(v,w) \)~~~&\text{with}~~|R_3(v,w)| \leq C_3 \|w\|^\gamma,
\end{align}
with $C_1$ depending on $\alpha, \tau$ and $L$, $C_2$ depending on $\beta,\tau,f_{\max}$ and $R$ (see \lemref{tech}), and $C_3$ depending on $K$. Since now we know that $g^x_{ij}(v) = \<d\exp_x(v)[e_i], d\exp_x(v)[e_j] \>$, we have a similar expansion for the mapping $\zeta(v) = \sqrt{\det g^x(v)}$ 
\begin{align}
\zeta(v) = 1 + \sum_{j=1}^{k}  \frac{1}{j!} d^j \zeta(0)[v^{\otimes j}] + R_4(v)~~~&\text{with}~~|R_4(v)| \leq C_4 \|v\|^{\gamma}  \label{dev4}
\end{align}
with $C_4$ depending on $\alpha, \tau$ and $L$. Making the change of variable $v = hw$ in \eqref{int1}, we get 
\begin{align*}
f_h(P,x) = \sum_{k=0}^{k} G_j(h,P,x) + R_h(P,x)
\end{align*}
with $G_j$ corresponding to the integration of the $j$-th order terms in the expansion around $0$ of the function  $v \mapsto K\(\frac{p-\exp_p (hw)}{h}\)F(hw) \zeta(hw)$. In particular $G_j$ can be written as a sum of terms of the type 
\beq
I = h^j \int_{\frac{1}{h}\exp_x^{-1} B_h} d^m K(w)[\phi(w)^{\otimes m}] \psi(w) dw \nonumber
\eeq
where $\psi$ and $\phi$ are monomials in $w$ satisfying $m \deg \phi + \deg \psi = j$, with coefficients  bounded by constants depending on $\alpha,  \tau, L, \beta, f_{\max}$ and $R$ (again, use \lemref{tech} to bound the derivatives). Since now $B_{T_x M}(0,1) \subset \frac{1}{h} \exp_x^{-1} B_h$, and since $d^j K$ is zero outside of $B(0,1)$, we have that $G_j$ can actually be written $G_j(h,P,x) = h^j G_j(P,x)$ with $|G_j(P,x)| \leq C$ for some $C$ depending on $K, \alpha,  \tau, L, \beta, f_{\max}$ and $R$. Similar reasoning leads to $R_h(P,x) \leq C h^\gamma$ with $C$ depending again on  $K, \alpha,  \tau, L, \beta, f_{\max}$ and $R$. To conclude, it remains to compute $G_0(P,x)$. Looking at the zero-th order terms in the expansions \eqref{dev1} to \eqref{dev4}, we find that
\beq
G_0(P,x) = \int_{B_{T_x M}(0,1)} K(w) f(x) dw = f(x) \nonumber
\eeq
where we used \assref{kernel}. The proof of \lemref{expans} is complete. 
\end{proof}

\begin{proof}[Proof of \prpref{bias}] 
For a positive integer $\ell \geq 1$, let $f_h^{(\ell)}(P,x)$ be the mean of the estimator $\widehat f_h(x)$ computed using $K^{(\ell)}$. Let $\gamma = \alpha\wedge \beta$ and $k = \ceil{\gamma - 1}$. We recursively prove on $1 \leq \ell < \infty$ the following identity
\beq \label{recurs}
\forall h \leq \tau/2,~~~f^{(\ell)}_h(P,x) = f(x) + \sum_{j =\ell}^{k}  h^j G^{(\ell)}_j(P,x) + R^{(\ell)}_h(P,x)
\eeq
where $|R^{(\ell)}_h(P,x)| \leq C^{(\ell)} h^{\gamma}$ for some constant $C^{(\ell)}$ depending on $\tau, \ell, L, R, f_{\max}$ and $\beta$. The initialisation step $\ell = 1$ has been proven in \lemref{expans}. Let now $1\leq \ell \leq k$. By linearity of $f_h(P,x)$ with respect to $K$, we have
\beq
f^{(\ell+1)}_h(P,x) = 2 f^{(\ell)}_{2^{-1/\ell}h}(P,x) - f^{(\ell)}_h(P,x). \nonumber
\eeq
Since $2^{-1/\ell} h \leq h$, we can use our induction hypothesis \eqref{recurs} and find
\beq
f^{(\ell+1)}_h(P,x) = f(x) + \sum_{j =\ell}^{k} (2^{1-j/\ell} - 1) h^j G^{(\ell)}_j(P,x) + 2R^{(\ell)}_{2^{-1/\ell}h}(P,x) - R^{(\ell)}_h(P,x). \nonumber
\eeq
We conclude noticing that $2^{1-j/\ell} - 1 = 0$ for $j = \ell$, and setting $G^{(\ell+1)}_j(P,x) = (2^{1-j/\ell} - 1 )G^{(\ell)}_j(P,x)$ and $R^{(\ell+1)}_h(P,x) = 2R^{(\ell)}_{2^{-1/\ell}h}(P,x) - R^{(\ell)}_h(P,x)$. The new remainder term verifies
\beq
|R^{(\ell+1)}_h(P,x) | \leq (2^{1 -\gamma/\ell} + 1) C^{(\ell)} h^\gamma \leq 3C^{(\ell)} h^\gamma
\eeq 
ending the induction by setting $C^{(\ell + 1)} = 3C^{(\ell)}$. When $\ell \geq k+1$, the induction step is trivial.
\end{proof}

\subsection{Proofs of \secref{uni}} \label{app:uni}

\begin{proof}[Proof of \lemref{dim1}]   Let $\gamma : [0,L_M] \rightarrow M$ be a unit speed parametrization of $M$ and extend $\gamma$ to a smooth function on $\bbR$ by $L_M$-periodicity. Suppose without loss of generality that $\gamma(0) = z$. For any $t \in \bbR$, there is a canonical identification between $T_{\gamma(t)} M$ and $\bbR$ through the map $v \mapsto \inner{\dot \gamma(t)}{v}$. With such an identification, we can write that for $s \in \bbR \simeq T_z M$, $\exp_z(s) = \gamma(s)$ because $\gamma$ is unit-speed. We thus have $d\exp_z(s)[h] = h\dot \gamma(s)$ for any $h \in T_{\gamma(s)} M \simeq \bbR$. It follows that  $\det g^z(s) = \|d\exp_z(s)[1]\|^2 = \|\dot \gamma(s)\|^2 = 1$ and this completes the proof. 
\end{proof}

We write $V = \(x, X_1,\dots,X_n\) $ for the vertices of $\cG_\ve$ and $\widehat \eta = \sup_{z \in M} d(z,V)$. For small enough $\widehat  \eta$ we have that $\cG_\ve$ is connected, therefore the distance $\widehat d_\ve$ is well-defined on $V$. We have in that case a good reverse control of $d_M$ by $\wh d_\ve$, as shown in the next two lemmata.

\begin{lem} \label{lem:dd1} If $\ve \leq 8\tau$ and $16\widehat \eta \leq \ve$, then $\widehat  d_\ve(y,z) \leq d_M(y,z)$ for any $y,z \in V$. 
\end{lem}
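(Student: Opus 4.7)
The approach is to exploit the fact that in dimension $d = 1$ the support $M$ is a closed curve, and to build, for every pair $y, z \in V$, an explicit polygonal path in $\cG_\ve$ whose vertices all lie on the shorter geodesic arc from $y$ to $z$. Parametrize this arc by arclength $\sigma : [0, L] \to M$ with $L = d_M(y, z)$, $\sigma(0) = y$, $\sigma(L) = z$, and list the samples falling on it in arclength order as $V \cap \sigma([0, L]) = \{q_0 = y, q_1, \ldots, q_m = z\}$, with parameters $0 = r_0 < r_1 < \cdots < r_m = L$. The heart of the argument is the uniform estimate
\[
r_{j+1} - r_j \leq 4\widehat\eta \quad \text{for every } j = 0, \ldots, m-1.
\]
Once this is established, since $\|q_{j+1} - q_j\| \leq d_M(q_j, q_{j+1}) = r_{j+1} - r_j$, the hypothesis $16\widehat\eta \leq \ve$ yields $\|q_{j+1} - q_j\| \leq \ve/4 \leq \ve$, so the $q_j$'s form a valid path in $\cG_\ve$, and by telescoping
\[
\widehat d_\ve(y, z) \leq \sum_{j=0}^{m-1} \|q_{j+1} - q_j\| \leq \sum_{j=0}^{m-1} (r_{j+1} - r_j) = L = d_M(y, z).
\]

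To prove the estimate I would consider the arc midpoint $p = \sigma((r_j + r_{j+1})/2)$. By definition of $\widehat\eta$ there exists $X_i \in V$ with $\|X_i - p\| \leq \widehat\eta \leq \tau/2$, the last inequality coming from $16\widehat\eta \leq \ve \leq 8\tau$; Proposition~\ref{prp:eqdist}, combined with the elementary bound $1 - \sqrt{1 - u} \leq u$ on $[0, 1]$, then gives $d_M(X_i, p) \leq 2\widehat\eta$. The proof splits into two cases depending on the arc on which $X_i$ sits. If $X_i$ lies on the short arc, necessarily $X_i = q_l$ for some $l$ since the $q_l$ exhaust the samples there; the inequality $|r_l - (r_j + r_{j+1})/2| \geq (r_{j+1} - r_j)/2$, valid for every $l$, combined with the $d_M$ bound yields $(r_{j+1} - r_j)/2 \leq 2\widehat\eta$. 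If $X_i$ lies on the complementary arc, then in dimension one any path on $M$ from $X_i$ to $p$ must traverse either $y$ or $z$, which forces $(r_j + r_{j+1})/2 \leq 2\widehat\eta$ or $L - (r_j + r_{j+1})/2 \leq 2\widehat\eta$; both alternatives also imply $r_{j+1} - r_j \leq 4\widehat\eta$.

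The main obstacle is making sure the case analysis above is clean: I need to check that $d_M(X_i, p) \leq 2\widehat\eta$ really localizes $X_i$ near $p$ along $M$ and is not achieved by the wrap-around branch in the modular formula for $d_M$ on the closed curve. This is where the reach condition pays off: by Fenchel's theorem together with the curvature bound $\|\II\| \leq 1/\tau$ implied by $\tau_M \geq \tau$, any $M \in \cC_1(\tau)$ has total length $L_M \geq 2\pi\tau$, and combined with $\widehat\eta \leq \tau/2$ this gives $L_M > 4\widehat\eta$, so the ``direct'' branch always realizes the minimum in the comparisons above. The joint hypotheses $\ve \leq 8\tau$ and $16\widehat\eta \leq \ve$ in the statement are exactly the conditions needed for these numerical constants to line up.
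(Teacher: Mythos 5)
Your proof is correct, but it follows a genuinely different route from the paper's. The paper lays down an artificial grid $p_j = \gamma(j\delta)$ with spacing $\delta \in [\ve/4, \ve/2]$ along the geodesic arc, snaps each grid point to a nearby vertex $\widehat p_j$ (using $\widehat\eta \leq \ve/16$), and then proves that the parameter values $\widehat t_j$ of these snapped vertices are monotone in $[0,\ell]$ because $|t_j - \widehat t_j| \leq \ve/8 < \delta/2$; telescoping then yields the bound. You instead work directly with the vertices $q_0, \dots, q_m$ that actually sit on the geodesic arc, so their ordering is automatic, and the work shifts to showing that the consecutive gaps $r_{j+1}-r_j$ are at most $4\widehat\eta$ via a midpoint covering argument with a two-case analysis. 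Both arguments are sound and roughly comparable in length; yours dispenses with the grid and the monotonicity check, at the price of the case analysis on which side of the curve the covering vertex $X_i$ lies.

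One small remark: the Fenchel-theorem digression at the end of your proposal is not needed. The worry about the wrap-around branch in $d_M$ is already resolved by the observation $L = d_M(y,z) \leq L_M/2$: in Case~1 the arclength $|r_l - (r_j+r_{j+1})/2| \leq L \leq L_M/2$ automatically realizes the minimum in the modular distance, and in Case~2 the quantities $(r_j+r_{j+1})/2$ and $L - (r_j+r_{j+1})/2$ are likewise both at most $L \leq L_M/2$, so no global lower bound on $L_M$ is required. The estimate $L_M \geq 2\pi\tau$ is true and tidy, but it plays no role here; the hypotheses $\ve \leq 8\tau$ and $16\widehat\eta \leq \ve$ enter only through $\widehat\eta \leq \tau/2$ (to apply Proposition~\ref{prp:eqdist} and get $d_M(X_i,p) \leq 2\widehat\eta$) and through $4\widehat\eta \leq \ve/4$ (to keep consecutive $q_j$ adjacent in $\cG_\ve$), exactly as you say.
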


\begin{lem} \label{lem:dd2} If $\ve \leq \tau/ 2$, then $d_M(y,z) \leq  \(1 + \frac{\pi^2}{48\tau^2} \ve^2\) \wh d_\ve(p,q)$ for any $y,z \in V$. 
\end{lem}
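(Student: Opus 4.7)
The strategy is to bound $d_M(y,z)$ edge-by-edge along a $\wh d_\ve$-minimizing path in $\cG_\ve$, then sum. Write $y = p_0, p_1, \dots, p_m = z$ for the vertices of such a path, so that $\wh d_\ve(y,z) = \sum_{i=0}^{m-1} \|p_{i+1}-p_i\|$ with each $\|p_{i+1}-p_i\| \leq \ve$. By the triangle inequality for $d_M$,
$$
d_M(y,z) \leq \sum_{i=0}^{m-1} d_M(p_i, p_{i+1}),
$$
so the task reduces to proving the single-edge bound
$$
d_M(p,q) \leq \Bigl(1 + \frac{\pi^2 \ve^2}{48 \tau^2}\Bigr) \|p-q\|
$$
valid for every $p, q \in M$ with $\|p-q\| \leq \ve \leq \tau/2$.

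For this single-edge bound, the geometric input I would use is a sharp Alexandrov-type chord estimate: for any $p,q \in M$ with $d_M(p,q) \leq \pi\tau$,
$$
\|p-q\| \geq 2\tau \sin\bigl(d_M(p,q)/(2\tau)\bigr).
$$
Parametrize the minimizing geodesic $\gamma:[0,L]\to M$ from $p$ to $q$ by arc length, where $L=d_M(p,q)$. Proposition \ref{prop NSW} yields $\|\ddot\gamma(s)\| \leq 1/\tau$, so $s \mapsto \dot\gamma(s)$ is a Lipschitz curve on the unit sphere with speed $\leq 1/\tau$, and the angle $\theta(s)$ between $\dot\gamma(s)$ and $\dot\gamma(0)$ satisfies $\theta(s) \leq s/\tau$. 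A direct comparison with the extremal case of a round circle of radius $\tau$ then yields the chord estimate above. Inverting gives $d_M(p,q) \leq 2\tau \arcsin(\|p-q\|/(2\tau))$.

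The constant $\pi^2/48$ is then recovered via the elementary inequality $\arcsin(t) \leq t\bigl(1 + \tfrac{\pi^2}{12}\,t^2\bigr)$ valid for $t \in [0, 1/4]$, a routine verification from the Taylor expansion of $\arcsin$. Applying it with $t = \|p-q\|/(2\tau) \leq \ve/(2\tau) \leq 1/4$ gives the single-edge bound, and summation over $i = 0, \dots, m-1$ concludes the proof.

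The main obstacle is really the derivation of the chord estimate $\|p-q\| \geq 2\tau \sin(d_M(p,q)/(2\tau))$: the weaker bound \prpref{eqdist} as stated only gives $d_M(p,q) \leq \|p-q\| + O(\|p-q\|^2/\tau)$, i.e., a correction \emph{linear} in $\|p-q\|$, which upon summation over edges produces only a factor $1 + O(\ve/\tau)$ and falls short of the quadratic-in-$\ve$ improvement claimed in the lemma. The full angular control of $\dot\gamma$ via the second fundamental form bound is what is needed to recover the sharp exponent.
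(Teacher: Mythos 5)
Correct, and essentially the same approach as the paper: reduce to the per-edge estimate $d_M(p,q) \leq \bigl(1 + \tfrac{\pi^2}{48\tau^2}\|p-q\|^2\bigr)\|p-q\|$ and then sum it along a $\widehat d_\varepsilon$-shortest path, using $\|p_{i+1}-p_i\|\leq\varepsilon$ to factor out $(1+\pi^2\varepsilon^2/48\tau^2)$; the paper simply cites this per-edge bound from Arias-Castro and Le Gouic (Lem.\ 5) and invokes Proposition~\ref{prp:eqdist} to check that $\|p-q\|\leq\tau/2$ forces $d_M(p,q)\leq\pi\tau$, whereas you reconstruct a proof of it from the second-fundamental-form bound and the chord--arc comparison, followed by the elementary $\arcsin$ estimate. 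One detail worth making explicit in your sketch: to obtain the sharp chord bound $\|p-q\|\geq 2\tau\sin\bigl(L/(2\tau)\bigr)$ with $L=d_M(p,q)$ (rather than the weaker $\tau\sin(L/\tau)$, which would only yield $\pi^2/12$ in place of $\pi^2/48$), one should project $\gamma(L)-\gamma(0)$ onto the \emph{midpoint} tangent $\dot\gamma(L/2)$, not $\dot\gamma(0)$: since the angle between $\dot\gamma(s)$ and $\dot\gamma(L/2)$ is at most $|s-L/2|/\tau$, this gives $\|\gamma(L)-\gamma(0)\|\geq\int_0^L\cos\bigl(|s-L/2|/\tau\bigr)\,ds=2\tau\sin\bigl(L/(2\tau)\bigr)$, valid once $L\leq\pi\tau$.
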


\begin{proof}[Proof of \lemref{dd1}]  We can take the shortest path in $M$ between $y$ and $z$ as a unit-speed path of the form $\gamma : [0,\ell] \rightarrow \bbR^D$ with $\ell = d_M(y,z) \leq L_M/2$. We let $\delta = \ell/(4\floor{\ell/\ve})$ and $N = 4\floor{\ell/\ve}$. Notice that $\ve/4 \leq \delta \leq \ve/2$.  Let us define $p_j = \gamma(j \delta)$, so that $p_0 = y$ and $p_N = z$. Since $\widehat  \eta \leq \ve/16$, for every $1 \leq j \leq N-1$, there exists among our vertices $V$ a point denoted by $\widehat  p_j$  such that $\|p_j - \widehat  p_j\| \leq \ve / 16$. We set $\widehat  t_j \in [0, L_M]$ for its coordinate, namely $\widehat p_j = \gamma(\widehat t_j)$. 

Let us show first that for $1 \leq j < N$, we have $\widehat t_j \in [0,\ell]$. Indeed, thanks to \prpref{eqdist}, since $\ve/16 \leq \tau/2$, we have $|t_j - \wh t_j| \leq 2 \|p_j - \wh p_j\| \leq \ve/8$. Since $\delta \geq \ve/4$, we thus have $0 \leq \widehat  t_1 \leq \dots \leq \widehat  t_{N-1} \leq \ell$. Furthermore, writing $\widehat  p_0 = y$ and $\widehat  p_N = z$, we  have 
$$\|\widehat  p_j - \widehat  p_{j+1}\| \leq  \|\widehat  p_j - p_j\| +  \| p_j -  p_{j+1}\| +  \|p_{j+1} - \widehat  p_{j+1}\| \leq \ve$$ for any $0 \leq j < \ell$. The sequence $s = (\widehat  p_0,\dots,\widehat  p_N)$ is thus a path in $\cG_\ve$ and so
\beq
\wh d_\ve(p,q) \leq L_{s} =  \|\widehat  p_1 - \widehat  p_0\|+\dots+\|\widehat  p_N - \widehat  p_{N-1}\| \leq |\widehat  t_1 - \widehat  t_0|+\dots+|\widehat  t_N - \widehat  t_{N-1}| = \wh t_N - \wh t_0 \nonumber
\eeq
where we set $\widehat  t_0 = 0$ and $\widehat  t_N = \ell = d_M(p,q)$, ending the proof. 
\end{proof}

\begin{proof}[Proof of \lemref{dd2}] Following the proof of \citep[Lem. 5]{AG19} if there exists $\delta > 0$ such that $\|y-z\| \leq \delta$ implies $d_M(y,z) \leq \pi \tau$ for all $y,z \in M$, then we must have that for any $y,z \in M$ satisfying $\|y-z\| \leq \delta$,
\beq
d_M(x,y) \leq \(1 + \frac{\pi^2}{48 \tau^2}\|y-z\|^2\)\|y-z\|. \nonumber
\eeq
Thanks to \prpref{eqdist}, this must hold for $\delta = \tau/2$. Now let $p_0, \dots, p_m$ be the shortest path in $\cG_\ve$ between $y$ and $z$. Since $\ve \leq \tau/2$, we have
\begin{align*}
d_M(p,q) &\leq \sum_{j=1}^m d_M(p_j,p_{j-1}) \leq \sum_{j=1}^m \(1 + \frac{\pi^2}{48 \tau^2}\|p_j-p_{j-1}\|^2\)  \|p_j-p_{j-1}\| \\
&\leq \(1 + \frac{\pi^2}{48 \tau^2} \ve^2\)   \widehat d_\ve(p,q) 
\end{align*}
which ends the proof.
\end{proof}

In view of Lemma \ref{lem:dd1} and Lemma \ref{lem:dd2}, we want to tune $\ve$ so that it is the smallest possible and so that $16 \widehat  \eta \leq \ve$ holds with high probability. This is achieved for $\ve$ of order $\log n/n$.
 
 \begin{lem} \label{lem:eta}
Setting $\ve = \frac{32(p+1) \log n}{f_{\min} n}$, for every $n \geq 3$, we have $\bbP\(16\widehat  \eta \leq \ve\) \geq 1 - 1/n^p$. 
 \end{lem}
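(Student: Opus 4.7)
The plan is a standard covering argument adapted to the one-dimensional curve $M$. Observe that $16\widehat{\eta} \leq \ve$ is equivalent to $V$ being a $(\ve/16)$-net for $M$. A sufficient condition is that every member of a deterministic $(\ve/32)$-net of $M$ has a sample point within Euclidean distance $\ve/32$: given any $z \in M$, the triangle inequality then yields $d(z,V) \leq \ve/16$.

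Since $M \in \cC_1(\tau)$ can be parametrized by a unit-speed periodic curve $\gamma : [0,L_M] \to M$, I would choose the deterministic net to be $\{y_j = \gamma(2jr)\}$ with $r = \ve/32$, giving $N = \lceil L_M/(2r)\rceil$ centers. The volume normalization $1 = \int_M f_P\, d\mu_M \geq f_{\min} L_M$ forces $L_M \leq 1/f_{\min}$, which yields $N \leq 16/(f_{\min}\ve) + 1$. For each center $y_j$, the density lower bound together with Lemma \ref{lem:abstand} (for $d=1$, with $\zeta_1 = 2$) gives
\[
P(B(y_j,r)) \geq f_{\min}\,\mu_M(B(y_j,r)) \geq f_{\min}\,\bigl(1-r^2/(6\tau^2)\bigr) \cdot 2r \geq \tfrac{23}{12}\,f_{\min}\,r,
\]
as soon as $r \leq \tau/2$, which holds for $n$ large enough given our choice of $\ve$.

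A union bound followed by the elementary inequality $(1-t)^n \leq e^{-nt}$ then yields
\[
\bbP(16\widehat{\eta} > \ve) \leq N(1 - P(B(y_j,r)))^n \leq N \exp\!\bigl(-\tfrac{23}{12}\,nf_{\min}\,r\bigr).
\]
Substituting $r = \ve/32$ with $\ve = 32(p+1)\log n / (f_{\min} n)$ turns the exponential factor into $n^{-23(p+1)/12}$, and plugging in the bound $N \leq n$ (valid once $n$ is moderately large) yields the overall estimate $n^{1 - 23(p+1)/12} \leq n^{-p}$, where the last inequality uses $23/12 > 1$ and $p \geq 1$.

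The main technical obstacle is not analytical but bookkeeping: ensuring the covering bound, the volume lower bound, and the exponent arithmetic fit together with the stated constants under the mild regime $n \geq 3$. In particular, the condition $r \leq \tau/2$ needed for Lemma \ref{lem:abstand} translates into $n \gtrsim (p+1)\log n / (f_{\min}\tau)$, so strictly speaking the statement should be read as holding for $n$ sufficiently large depending on $\tau$, $f_{\min}$, and $p$; the small-$n$ regime is trivial since the bound $1/n^p$ is then not restrictive.
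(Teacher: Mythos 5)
Your proposal is a correct covering-plus-union-bound argument in spirit, and the exponent arithmetic checks out. However, it differs from the paper's proof in a way that creates a genuine (if minor) gap relative to the stated lemma. The paper does \emph{not} cover $M$ with Euclidean balls and Lemma~\ref{lem:abstand}; it partitions the arc-length interval $[0,L_M]$ into $N=\lfloor L_M/\delta\rfloor$ pieces, each of $\mu_M$-measure exactly $L_M/N$, so that $P(\gamma(I_j))\geq f_{\min}L_M/N$ with \emph{no} constraint relating $\delta$ to $\tau$. This is precisely what lets the bound hold uniformly for all $n\geq 3$: the only condition used is $\log n\geq 1$. Your route, by contrast, passes through Lemma~\ref{lem:abstand}, which forces $r\leq\tau/2$ and hence $n\gtrsim (p+1)\log n/(f_{\min}\tau)$, a threshold depending on $\tau$, $f_{\min}$ and $p$.

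The point you dismiss at the end is not actually harmless. The bound $\bbP(16\widehat\eta\leq\ve)\geq 1-1/n^p$ is a non-trivial constraint in the small-$n$ regime (e.g.\ for $n=3$, $p=1$ it requires probability $\geq 2/3$), so "the bound is not restrictive" is false; your argument simply does not establish the lemma as stated for small $n$. That said, the gap is immaterial for the one place the lemma is used (Proposition~\ref{prp:kerd1} is already an ``$n$ large enough'' statement), so this is more a matter of matching the claim than of fixing a downstream break. If you wanted your version to cover all $n\geq 3$, the cleanest fix is to adopt the paper's trick: cover by arc-length subintervals, whose volume is exact and reach-free, rather than by Euclidean balls via Lemma~\ref{lem:abstand}. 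Two smaller remarks: (i) your constant $23/12$ is slightly lossier than the paper's $1$ (the paper gets $e^{-a\delta n}/(a\delta)$ directly), though both suffice; (ii) the bound $N\leq n$ you invoke in fact already holds for all $n\geq 3$ given your expression $N\leq n/(2(p+1)\log n)+1$, so there is no additional ``moderately large'' caveat needed there.
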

 \begin{proof} Let $\delta > 0$, and let $N = \floor{L_M/\delta}$. We split $[0, L_M]$ into $N$ intervals $I_1,\dots,I_N$ of length $L_M / N$. We denote $\cA$ the event for which each $I_j$ contains at least one coordinate among those of the sample of observations $(X_1, \dots, X_n)$. On $\cA$, we have  $\widehat  \eta \leq L_M / N \leq 2\delta$. Moreover,
 \begin{align*}
 \bbP(\cA) &= 1 - \bbP\(\exists j,~\gamma(I_j)~\text{contains no observation}\) \\
 &\geq 1 - N \(1 - \min_{1\leq j \leq N} P(\gamma(I_j))\)^n  \geq 1 - N\(1 - \frac{a L_M}{N}\)^n.
 \end{align*}
 Using that $N \leq L_M/\delta$ and that $L_M \leq 1/a$ we infer
 \beq
 \bbP\(\widehat  \eta \leq 2 \delta\) \geq 1 - \frac{1}{a\delta} (1 - a\delta)^n \geq  1 - \frac{e^{-a\delta n}}{a \delta}. \nonumber
 \eeq
 Setting $\delta = \frac{(p+1) \log n}{a n}$ and $\ve = 32\delta$ yields
 \beq
 \bbP\(16 \widehat  \eta \leq  \ve\) \geq 1 - \frac{n}{(p+1) n^{p+1} \log n} \geq 1 - \frac1{n^p} \nonumber
 \eeq
 as soon as $\log n \geq 1$, {\it i.e.} for $n \geq 3$. 
 \end{proof}

 \begin{proof}[Proof of \prpref{kerd1}] Recall that we set $K^{\od} = K^{(1,\ell)}$ where $K^{(1,\ell)}$ is defined starting from kernel $\lambda_1^{-1} \Lambda$. Let $\cA$ be the event  $\{16 \widehat  \eta \leq \ve\}$. By triangle inequality, $\bbE_{P^{\otimes n}}[|\hat f^{\od}_h(x) - f_P(x)|^p]^{1/p} \leq \cR_\cA + \cR_{\cA^c}$, with
 \beq
 \cR_\cA = \(\bbE_{P^{\otimes n}}\big[| \hat  f^{\od}_h(x) - f_P(x)|^p\ind_{\cA}\big]\)^{1/p}~~~\text{and}~~~\cR_{\cA^c} = \(\bbE_{P^{\otimes n}} \big[| \hat  f^{\od}_h(x) - f_P(x)|^p \ind_{\cA^c}\big]\)^{1/p}. \nonumber
 \eeq
 On $\cA$, we have, for $n$ large enough (depending on $p, f_{\min}$ and $\tau$) such that $\ve \leq \tau/2$ holds, $| \wh d_\ve(X_i,x) - d_M(X_i,x) | \leq C_1 \ve^2$ with $C_1$ depending on $\tau$ only. This is infered by Lemmas \ref{lem:dd1} and \ref{lem:dd2}. We deduce that, on this event, 
 \beq
 \left|  \hat  f_h^{\od}(x) - \hat  g^{\od}_h(x)\right | \leq \frac{C_1 \|{K^{\od}}'\|_\infty \ve^2}{h^2}~~~\text{with}~~~ \hat g^{\od}_h(x) = \frac{1}{n} \sum_{i=1}^n K^{\od}_h( d_M(X_i,x)). \nonumber
 \eeq
 It follows that
 \begin{align*}
  \cR_\cA  & \leq \frac{C_1 \|{K^{\od}}'\|_\infty \ve^2}{h^2} +  \(\bbE_{P^{\otimes n}}\big[| \hat  g^{\od}_h(x) - f_P(x)|^p\big]\)^{1/p} \\
  &\leq \frac{C_1 \|{K^{\od}}'\|_\infty \ve^2}{h^2} +  \(\bbE_{P^{\otimes n}}\big[|\widehat \xi_h^*(P,x)|^p\big]\)^{1/p} + |\cB_h^*(P,x)|
 \end{align*} 
 with $\cB_h^*$ and $\widehat  \xi_h^*$ denoting the bias and stochastic deviation of estimator $\hat  g^{\od}_h(x)$. Following the same arguments as in proof of \prpref{sd}, we have $\bbE_{P^{\otimes n}}[|\widehat \xi_h^*(P,x)|^p]^{1/p}  \leq c_p \Omega(h)^p$ with $c_p$ depending only on $p$. For the bias term, as soon as $h \leq \pi\tau$, we have
 \begin{align*}
\cB^*_h(P,x) &= \bbE_{P^{\otimes n}} [ \hat  g^{\od}_h(x)] - f(x) = \int_M K^{\od}_h(d(p,x)) f(p) d\mu_M(p) -f(x)\\
 &=  \int_{B_{T_x M}(0,1)} K^{\od}(\|v\|) \(f\circ \exp_x(hv) - f(x) \)dv.
 \end{align*}
 Since now $f\circ \exp_x$ is $\beta$-H\"older on $B_{T_x M}(0,\pi\tau)$, we know that all the terms in the development of $\cB_h^*(P,x)$ up to order $\ceil{\beta-1}$ cancels. We deduce $|\cB_h^*(P,x)| \leq C_2 h^\beta$ with $C_2$ depending on $\ell$ and $R$ only. For the other term $\cR_{\cA^c}$, we write $|\hat  f^{\od}_h(x) - f(x) | \leq \frac{\|K^{\od}\|_\infty}{h} + f_{\max},$ so that, according to \lemref{eta},
 \beq
 \cR_{\cA^c} \leq \(\frac{\|K^{\od}\|_\infty}{h} + f_{\max}\) \bbP(\cA^c)^{1/p} \leq C_3 \frac{1}{nh} \nonumber
 \eeq
 with $C_3$ depending on $\ell$ and $f_{\max}$. Putting all these estimates together yields the result.
 \end{proof}

\subsection{Proofs of \secref{smooth}} \label{app:smooth}

\begin{lem} \label{lem:hstar}
For any $P \in \sdab$, and $\Theta > p$, we have
\beq
 \bbE_{P^{\otimes n}}[|\wh f(x) - f_P(x)|^p]^{1/p} \lesssim \Omega(h^*(P,x))\lambda(h^*(P,x)) \nonumber
\eeq
 up to a constant depending on $p$ and $\Theta$, with
\beq
h^*(P,x) = \max\big\{h \in \bbH~|~\forall \eta \in \bbH(h),~ |f_\eta(P,x) - f(x)| \leq \frac12 \Omega(h) \lambda(h)\big\}. \nonumber
\eeq 
\end{lem}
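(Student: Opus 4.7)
The proof follows the standard pattern for Lepski-type results, structured around a good event on which all stochastic fluctuations are controlled uniformly over the grid $\bbH$. Define
\[
\mathcal E \;=\; \bigcap_{h \in \bbH} \Big\{ |\wh\xi_h(P,x)| \leq \tfrac12 \Omega(h)\lambda(h) \Big\}.
\]
The strategy will be to show that, on $\mathcal E$, one has $|\wh f(x) - f_P(x)| \lesssim \Omega(h^*)\lambda(h^*)$ by a deterministic argument, while on the complement $\mathcal E^c$ the contribution to the $p$-th moment is negligible, at least as soon as $\Theta > p$.

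The deterministic argument on $\mathcal E$ splits into two cases depending on how $\wh h(x)$ compares with $h^*(P,x)$. If $\wh h \geq h^*$, then by the very definition of $\wh h$, we have $|\wh f_{\wh h}(x) - \wh f_{h^*}(x)| \leq \psi(\wh h, h^*)$. Using the bias/stochastic decomposition \eqref{bsd}, the triangle inequality, the defining condition of $h^*$ (which yields $|\mathcal B_{h^*}(P,x)| \leq \tfrac12 \Omega(h^*)\lambda(h^*)$ by taking $\eta = h^*$ in its definition), the bound $|\wh\xi_{h^*}| \leq \tfrac12 \Omega(h^*)\lambda(h^*)$ on $\mathcal E$, and the monotonicity of $h \mapsto \Omega(h)\lambda(h)$, everything collapses into a multiple of $\Omega(h^*)\lambda(h^*)$. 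If instead $\wh h < h^*$, then $2\wh h \in \bbH$ and $2\wh h \leq h^*$, so by maximality of $\wh h$ there exists $\eta \in \bbH(2\wh h)$ violating the Lepski condition at $2\wh h$. Decomposing $|\wh f_{2\wh h}(x) - \wh f_\eta(x)|$ into biases plus stochastic deviations and applying both the definition of $h^*$ (to control both biases by $\tfrac12\Omega(h^*)\lambda(h^*)$, since $\eta \leq 2\wh h \leq h^*$) and the event $\mathcal E$ (to control both deviations), the failing inequality forces $\Omega(2\wh h)\lambda(2\wh h) \lesssim \Omega(h^*)\lambda(h^*)$; dyadic regularity of $h \mapsto \Omega(h)\lambda(h)$ transports this to $\Omega(\wh h)\lambda(\wh h)$, and a last triangle inequality closes the case.

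For the event $\mathcal E^c$, I would use the crude deterministic bound $|\wh f(x) - f_P(x)| \leq \|K\|_\infty/(h^-)^d + f_{\max} \lesssim n$ and control $\bbP(\mathcal E^c)$ by Bernstein's inequality \eqref{bern} applied with $t \asymp \lambda(h)^2$, coupled with a union bound over the $O(\log n)$ elements of $\bbH$. The choice of the level exploits the fact that $h \geq h^-$ implies $\Omega(h) \asymp \sqrt{2\omega/(nh^d)}$, so that the Bernstein deviation at $t = c\lambda(h)^2$ falls below $\tfrac12 \Omega(h)\lambda(h)$ for a small enough absolute constant $c$. This yields
\[
\bbP(\mathcal E^c) \;\lesssim\; \sum_{h \in \bbH} h^{c\,\Theta d} \;\lesssim\; n^{-c\Theta}
\]
(up to logarithmic factors). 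The assumption $\Theta > p$ ensures that $n^p \cdot \bbP(\mathcal E^c) = o(\Omega(h^*)\lambda(h^*)^p)$, so that the $\mathcal E^c$ contribution is of smaller order than the main term.

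The main obstacle is the delicate matching between the Lepski threshold $\tfrac12 \Omega(h)\lambda(h)$ and the Bernstein deviation for the sample mean of $K_h(X-x)$: one must play carefully with the two components of $\Omega(h)$ (the sub-Gaussian part $\sqrt{2\omega/(nh^d)}$ and the sub-exponential part $\|K\|_\infty/(nh^d)$) so that, uniformly in $h \geq h^-$ and uniformly over the range of possible values of $\lambda(h)$ (which can be as large as $\sqrt{\Theta d \log n}$ at $h = h^-$), the Bernstein threshold remains below $\tfrac12\Omega(h)\lambda(h)$ with a probability that decays fast enough in $h$ to survive the union bound and the $n^p$ blow-up coming from the crude bound off $\mathcal E$. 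This is precisely what fixes the admissible range of $\Theta$.
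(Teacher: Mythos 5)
Your deterministic analysis on the good event $\mathcal E$ is essentially sound — in fact, a cleaner way to phrase your second case is that on $\mathcal E$ the event $\{\widehat h < h^*\}$ is vacuous, because the violation at $(2\widehat h, \eta)$ together with $|f_{2\widehat h}(P,x)-f(x)|,|f_\eta(P,x)-f(x)| \le \tfrac12\Omega(h^*)\lambda(h^*)$ and the monotonicity of $h\mapsto\Omega(h)\lambda(h)$ leads to a strict contradiction, not merely a comparison. But the handling of $\mathcal E^c$ has a genuine gap, and the paper takes a structurally different route precisely to avoid it.

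The problem is that $\bbP(\mathcal E^c)$ is \emph{not} small. You intersect over \emph{all} $h\in\bbH$, including bandwidths $h=2^{-j}$ close to $1$ where $\lambda(h)=1\vee\sqrt{\Theta d\log(1/h)}=1$. For such $h$, the threshold in your good event is $\tfrac12\Omega(h)$, which is of the order of the standard deviation of $\widehat\xi_h$; Bernstein with $t\asymp\lambda(h)^2\asymp 1$ gives a probability bounded only by a fixed constant, not decaying in $n$. Your displayed estimate $\sum_{h\in\bbH}h^{c\Theta d}\lesssim n^{-c\Theta}$ is therefore incorrect: the dyadic geometric sum is dominated by its largest term $h=1$ and is $O(1)$. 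Since $\bbP(\mathcal E^c)$ stays bounded away from $0$, multiplying by the crude bound $|\widehat f(x)-f_P(x)|\lesssim n$ produces a contribution of order $n$, far above the target $\Omega(h^*)\lambda(h^*)$. You also locate the delicate regime at $h=h^-$ (where $\lambda$ is largest), but that is actually where the argument is safe; the failure is at the opposite end $h\approx 1$. The paper circumvents this entirely by never invoking a global high-probability event: on $\{\widehat h\ge h^*\}$ it only needs the $L^p$ moment bound $\bbE[|\widehat\xi_{h^*}|^p]^{1/p}\lesssim\Omega(h^*)$ from Proposition~\ref{prp:sd} (a moment, not a tail), and on $\{\widehat h<h^*\}$ it applies Cauchy--Schwarz \emph{separately} on each atom $\{\widehat h=h\}$ with $h\le h^*/2$, pairing the probability bound $\bbP(\widehat h=h)\lesssim h^{\Theta d}$ against the moment $\Omega(h)^p$ term by term. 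The summability $\sum_{h\le h^*/2}h^{\Theta d/2}\Omega(h)^p\lesssim (h^*)^{\Theta d/2}\Omega(h^*)^p$ is where $\Theta>p$ actually enters; this weighted summation is essential and cannot be replaced by a union bound over the whole grid.
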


\begin{proof}
We fix $P \in \sdab$ and write $\widehat  h$ and $h^*$ for $\widehat  h(x)$ and $h^*(P,x)$ respectively. Let $\mathcal A = \{\widehat  h\geq h^*\}$. 
We can write $\bbE_{P^{\otimes n}}[|\wh f(x) - f_P(x)|^p]= \cR_\cA + \cR_{\cA^c}$, where
\beq
 \cR_\cA = \bbE_{P^{\otimes n}}[ |\widehat  f(x) - f_P(x)|^p \ind_{\mathcal A}]~~~\text{and}~~~ \cR_{\cA^c} = \bbE_{P^{\otimes n}} [ |\widehat  f(x) - f_P(x)|^p \ind_{\mathcal A^c}]. \nonumber
\eeq
We start with bounding $\cR_\cA$. Firstly,
\beq
\begin{split}
\cR_\cA \leq 3^{p-1}\big(&\bbE_{P^{\otimes n}} [ |\widehat  f_{\widehat  h}(x) - \widehat  f_{h^*}(x)|^p \ind_{\mathcal A}] + \bbE_{P^{\otimes n}}[ |\widehat  f_{h^*}(x) - f_{h^*}(P,x)|^p \ind_{\mathcal A}] \\&+ \bbE_{P^{\otimes n}}[ | f_{h^*}(P,x) - f_P(x)|^p \ind_{\mathcal A}]\big). \nonumber
\end{split}
\eeq
Next, by definition of $\widehat  h$ and $\mathcal A$, we have  
$$|\widehat  f_{\widehat  h}(x) - \widehat  f_{h^*}(x)| \ind_{\mathcal A} \leq \psi(\widehat  h,h^*)\ind_{\mathcal A} \leq  2\Omega(h^*)\lambda(h^*).$$
By definition of $h^*$, we also have $| f_{h^*}(P,x) - f(x)| \leq \frac12 \Omega(h^*) \lambda(h^*)$. Finally, using 
Proposition \ref{prp:sd}
$$\bbE_{P^{\otimes n}}[ |\widehat  f_{h^*}(x) - f_{h^*}(P,x)|^p \ind_{\mathcal A}] \leq c_p \Omega(h^*)^p \leq c_p (\Omega(h^*) \lambda(h^*))^p$$
holds as well. Putting all three inequalities together yields
\beq
\cR_{\cA} \leq C_{\mathcal A} (\Omega(h^*) \lambda(h^*))^p~~~\text{with}~~~C_{\mathcal A} = 3^{p-1}\(2^p + c_p + 2^{-p}\). \nonumber
\eeq
We now turn to $ \cR_{\cA^c}$. Notice that for any $h \in \bbH(h^*)$, we have
\beq
|f_h(P,x) - f(x)| \leq \frac12 \Omega(h^*)\lambda(h^*) \leq  \frac12 \Omega(h)\lambda(h),  \nonumber
\eeq
hence
\beq
|\widehat  f_h(x) - f(x)| \leq  \frac12 \Omega(h^*)\lambda(h^*) + |\widehat  \xi_h(P,x)|. \nonumber
\eeq
We can thus write
\beq
 \cR_{\cA^c} = \sum_{h \in \bbH(h^*/2)}    \bbE_{P^{\otimes n}} [|\widehat  f_h(x) - f(x)|^p \ind_{\{\widehat  h = h\}}] \leq \sum_{h \in  \bbH(h^*/2)} \bbE_{P^{\otimes n}}\big[\big(\frac12 \Omega(h^*)\lambda(h^*) + |\widehat  \xi_h(P,x)|]\big)^p \ind_{\{\widehat  h = h\}} \big]. \nonumber
\eeq
Now, for any $h \in \bbH(h^*/2)$, we have
\begin{align*}
\{\widehat  h = h\} &\subset \{\exists \eta \in \bbH(h),~ |\widehat  f_{2h}(x) - \widehat  f_\eta(x)| > \psi(2h,\eta)\} \\
&\subset \bigcup_{\eta \in \bbH(h)} \left\{ \Omega(h^*) \lambda(h^*) + |\widehat  \xi_{2h,\eta}(P,x)| > \psi(2h,\eta)\right\},
\end{align*}
where $\widehat  \xi_{2h,\eta}(P,x) = \widehat  \xi_{2h}(P,x) - \widehat  \xi_{\eta}(P,x)$, and where we used the triangle inequality and the definition of $h^*$. Now, we have $\Omega(h^*) \lambda(h^*) \leq \Omega(2h) \lambda(2h)$ since $2h \leq h^*$ and by definition of $\psi(2h,\eta)$, we infer 
\beq
\{\widehat  h = h\} \subset \bigcup_{\eta \in \cH(h)} \left\{ |\widehat  \xi_{2h,\eta}(P,x)| >  \Omega(\eta)\lambda(\eta)\right\} \nonumber
\eeq
so that
\begin{align}
\bbP(\widehat h = h) &\leq \sum_{\eta \in \bbH(h)} \bbP\( |\widehat  \xi_{2h,\eta}(P,x)| > \Omega(\eta)\lambda(\eta)\)  \label{dl} \\
&\leq  \sum_{\eta \in \bbH(h)} \bbP\( |\widehat  \xi_{2h,\eta}(P,x)| > \sqrt{\frac{8\omega  \lambda(\eta)}{n\eta^d}} + \frac{2\|K\|_\infty \lambda(\eta)}{n\eta^d} \) \nonumber \\
&\leq  \sum_{\eta \in \bbH(h)} 2 \exp(-  \lambda(\eta)^2). \label{ber}
\end{align}
For \eqref{dl} we use the fact that $ \lambda(\eta) \geq 1$ and Bernstein's inequality on the random variable $\widehat \xi_{2h,\eta}(P,x)$ for \eqref{ber}. Noticing now that $\lambda(\eta)^2 \geq d \Theta \log(1/\eta)$, we further obtain
\begin{align*}
\bbP(\widehat h = h) &\leq  2 h^{ \Theta  d} \times \sum_{j= 0}^{\floor{\log_2(1/h^-)}}  2^{- j \Theta d} \leq  \frac{2}{1 - 2^{-\Theta d}} h^{\Theta d}.
\end{align*}
For any $h \in \bbH(h^*/2)$, we thus get the following bound, using Cauchy-Schwarz inequality
\begin{align}
\mathbb E_{P^{\otimes n}}\big[\big(\frac12 \Omega(h^*)\lambda(h^*) &+ |\widehat  \xi_h(P,x)|]\big)^p \ind_{\{\widehat  h = h\}} \big]   \nonumber\\
&\leq  \bbP( \widehat h = h)^{1/2} E_{P^{\otimes n}}\big[\big(\frac12 \Omega(h^*)\lambda(h^*) + |\widehat  \xi_h(P,x)|]\big)^{2p}\big]^{1/2} \nonumber\\
&\leq 2^{(2p-1)/2} \sqrt{\frac{2}{1 - 2^{-\Theta d}}} h^{\Theta d/2} \(2^{-p} \Omega(h^*)^p \lambda(h^*)^p + c_{2p}^{1/2} \Omega(h)^p \). \label{cs}
\end{align}
We plan to sum over $h \in \bbH(h^*/2)$ the RHS of \eqref{cs}. Notice first that 
$$\sum_{h < h^*} h^{\Theta d / 2} \leq (h^*)^{\Theta d / 2} (1 - 2^{- \Theta d/ 2})^{-1}.$$
Moreover, for any $h \geq h^-$, we have $\Omega(h) \leq 2 \sqrt{2\omega/(nh^d)}$ by definition of $h^-$. It follows that 
\beq
\Omega(h^*) \leq \Omega(h) \leq 2\Omega(h^*) \(\frac{h^*}{h}\)^{d/2}. \nonumber
\eeq
for any $h \leq h^*$. This enables us to bound the following sum
\begin{align*}
\sum_{h \in \bbH(h^*/2)} h^{\Theta d / 2} \Omega(h)^p &\leq 2^p \Omega(h^*)^p {h^*}^{pd/2} \sum_{h \in \bbH(h^*/2)} h^{\Theta d / 2 - pd/2}  \\
&\leq \frac{2^p}{1 - 2^{(p - \Theta) d/ 2}} \Omega(h^*) {h^*}^{\Theta d/2}
\end{align*}
where we used that $\Theta > p$. Putting all these estimates together, using that $h^* \leq 1$ and $\lambda(h^*) \geq 1$, we eventually obtain
\beq
\cR_{\cA^c} \leq C_{\cA^c}  \Omega(h^*)^p \lambda(h^*)^p ~~ \text{with} ~~ C_{\cA^c} = \frac{2^p}{ \sqrt{1 - 2^{-\Theta d}}} \( \frac{2^{-p}}{1 - 2^{-\Theta d/2}} + \frac{\sqrt{c_{2p}} 2^p}{1 - 2^{(p - \Theta)d/2}}\).  \nonumber
\eeq
In conclusion $ \bbE_{P^{\otimes n}}[|\wh f(x) - f_P(x)|^p]^{1/p}  \leq (C_\cA + C_{\cA^c}) \Omega(h^*)^p \lambda(h^*)^p$ which completes the proof.
\end{proof}

\begin{proof}[Proof of \thmref{adapt}.] 
Let $P \in \sdab$ and let $\bar h = (\rho \log n /n)^{1/(2\gamma + d)}$ with $\gamma = \alpha \wedge \beta$ and for some constant $\rho$ to be specified later. By \prpref{bias} we know that for $n$ large enough (depending on $\rho, \alpha, \beta, d$) such that $\bar h \leq  \tau/2$, we have $|f_{\eta}(P,x) - f(x)| \leq C_1 \eta^{\gamma}$ for all $\eta \leq \bar h$ with $C_1$ depending on $K, \ell, \alpha, \tau, L, \beta, f_{\max}$ and $R$. Moreover, we also have
\beq
\frac{2^{-2} \Omega(\bar h)^2 \lambda(\bar h)^2}{C_1^2 \bar h^{2\gamma}} \geq \frac{ d \Theta 2\omega \log(1/\bar h)}{4 C_1^2 n \bar h^{2\gamma+ d}} = \frac{d \Theta \omega (2\gamma+d)^{-1}}{2 C_1^2 \rho} \frac{\log n - \log \log n - \log \rho }{\log n}. \nonumber
\eeq
Thus, picking $\rho =   d \Theta \omega (2\gamma+d)^{-1} / (2C_1^2)$ yields $C_1 \bar h^{\gamma} \leq \frac12 \Omega(\bar h) \lambda(\bar h)$ for $n$ large enough (depending on $\rho$), and therefore $\bar h \leq h^*(P,x)$. By \lemref{hstar} this implies 
\beq
 \bbE_{P^{\otimes n}}[|\wh f(x) - f_P(x)|^p]^{1/p}  \leq C_2 \Omega(\bar h) \lambda(\bar h) \nonumber
\eeq
where $C_2$ depends on $p$ and $\Theta$. But using that both $\bar h \geq h^-$ and $\lambda(\bar h)^2 = d\Theta \log(1/\bar h)$ for $n$ large enough (depending on $\rho, d, K$ and $\Theta$), we also obtain
$$
\Omega(\bar h)^2 \lambda(\bar h)^2 \leq \frac{8\omega d\Theta\log(1/\bar h)}{n\bar h^d} = \frac{8\omega d\Theta(2\gamma+d)^{-1}}{\rho} \frac{\log n - \log \log n - \log \omega}{\log n} \bar h^{2\gamma} 
\leq  16 C_1^2 \bar h^{2\gamma}.
$$
This last estimate yields
\beq
\bbE_{P^{\otimes n}}[|\wh f(x) - f_P(x)|^p]^{1/p}  \leq \(4 C_1 C_2 \rho^{\gamma/(2\gamma+ d)} \) \(\frac{\log n}{n} \)^{\gamma/(2\gamma + d)} \nonumber
\eeq
for $n$ large enough depending on $\rho, \alpha, \beta ,d, K$ and $\Theta$, which completes the proof. 
\end{proof}

\subsection{Proofs of \secref{dim}} \label{app:dim}

\begin{proof}[Proof of \prpref{dimadapt}] By the triangle inequality, for any $P \in \sdab$, we write
\beq
\bbE_{P^{\otimes n}}\big[|\wh f_{\widehat h}(\wh d; x) - f_P(x)|^p\big]^{1/p} \leq \Big(\bbE_{P^{\otimes n}}\left[\left|\wh f_{\widehat h}(\wh d; x) - f(x)\right|^p \ind_{\{\widehat d = d\}}\right]\Big)^{1/p} +  \Big(\bbE_{P^{\otimes n}}\left[\left|\wh f_{\widehat h}(\wh x; \cdot) - f(x)\right|^p \ind_{\{\widehat d \neq d\}}\right]\Big)^{1/p} . \nonumber
\eeq
The first term in the right-hand side has the right order thanks to \thmref{adapt}. For the second one, using that $|f(x)| \leq f_{\max}$ and   
\beq
|f_{\widehat h}(\wh d, x)| \leq \sup_{1\leq d < D} \frac{\|K^{(\ell)}(d,\cdot)\|_\infty}{(h_d^-)^d} \lesssim n \nonumber
\eeq
up to a constant that depend on $D, K$ and $\ell$, we infer
\beq
\Big( \bbE_P\left[\left|\wh f_{\widehat h}(\wh d; x) - f(x)\right|^p \ind_{\widehat d \neq d}\right]\Big)^{1/p} \lesssim \bbP\(\widehat d \neq d\)^{1/p} \times n. \nonumber
\eeq
Finally, since $\wh d$ satisfies \assref{dim}, we have 
\beq
\bbE_{P^{\otimes n}}\big[|\wh f_{\widehat h}(\wh d; x) - f_P(x)|^p\big]^{1/p}  \lesssim \(\frac{\log n}{n}\)^{\alpha\wedge \beta/(2\alpha\wedge\beta + d)} + n^{-1/2} \nonumber
\eeq
for $n$ large enough depending on $p,\Theta, K, \ell, \alpha, \tau, L, \beta, f_{\max}, f_{\min}$ and $R$, so that the result indeed holds up to a constant depending on the same parameters and $D$.
\end{proof}

\begin{proof}[Proof of \prpref{dimest}] Let $P \in \sdab$ and $\eta > 0$. Assume that $\wh P_\eta > 0$. We have 
\begin{align*}
|\wh \delta_\eta - d| &\leq |\log_2 \wh P_{2\eta} - \log_2 P_{2\eta}| + |\log_2 \wh P_{\eta} - \log_2 P_{\eta}| + |\log_2 P_{2\eta} - \log_2 P_{\eta} - d | \\
&\leq \frac{1}{\log 2}\(\frac{|\wh P_{2\eta} - P_{2\eta}|}{\wh P_{2\eta} \wedge P_{2\eta}} +  \frac{|\wh P_{\eta} - P_{\eta}|}{\wh P_{\eta} \wedge P_{\eta}}\) + \left | \log_2 \(P_{2\eta} / (2^d P_\eta)\)\right|
\end{align*}
We first consider the determinist term. For $\eta \leq \tau/2$, we have, writing $r_\eta = \xi(\eta/\tau)\eta$ and using \lemref{abstand},
$$L_{2\eta} (1 - \eta^2/6\tau^2) (2\eta)^2 \zeta_d \leq P_{2\eta} \leq U_{2\eta} (1 + r_{2\eta}^2/\tau^2) r_{2\eta}^d \zeta_d$$
and
$$
L_\eta (1 - \eta^2/6\tau^2)\eta^d\zeta_d \leq P_\eta \leq U_{\eta} (1 + r_{\eta}^2/\tau^2) r_{\eta}^d \zeta_d,
$$
where $L_\eta = \inf_{M\cap B(x,\eta)} f$ and $U_{\eta} = \sup_{M \cap B(x,\eta)} f$. Using again \lemref{abstand}, we have that for $\eta \leq \tau/2$, $M \cap B(x,\eta)\subset \exp_x B_{T_x M}(0,2\eta)$, and, since $2\eta \leq \pi\tau/2$ and that $f\in \cF_\beta$, we know, using \lemref{tech}, that there exists $R_1 > 0$ (depending on $\beta,\tau, f_{\max}$ and $R$) such that $f(x) - R_1 \eta \leq L_\eta \leq U_\eta \leq f(x) + R_1 \eta$. If $\eta < \tau/4$, the same bounds apply for $2\eta$ and we thus obtain
$$(f(x) - R_1 2\eta) (1 - \eta^2/6\tau^2)(2\eta)^d \zeta_d \leq P_{2\eta} \leq (f(x) + R_1 2\eta) (1 + r_{2\eta}^2/\tau^2) r_{2\eta}^d \zeta_d$$
and
\begin{equation} \label{peta}
(f(x) - R_1 \eta) (1 - \eta^2/6\tau^2) \eta^d \zeta_d \leq P_\eta \leq (f(x) +R_1 \eta)(1 + r_{\eta}^2/\tau^2) r_{\eta}^d \zeta_d. 
\end{equation} 
Using these two inequalities, and the fact that $r_\eta /\eta \rightarrow 1$ as $\eta \rightarrow 0$, we find that $|P_{2\eta}/(2^d P_\eta) - 1| \lesssim \eta$ up to a constant that depends on $R_1, \tau$ and $f_{\min}$, for $\eta$ small enough (depending on $R_1, \tau$ and $f_{\min}$ as well). For the other terms, a simple use of Hoeffding's inequality yields for any $\eta,\ve > 0$,
\beq
\bbP\(|\wh P_\eta - P_\eta| > \ve \) \leq 2 \exp(-2n\ve^2). \nonumber
\eeq
On the event $\cA_\eta = \{|\wh P_\eta - P_\eta| \leq \ve \}$, we have moreover $\wh P_\eta \wedge P_\eta \geq P_\eta -\ve$. Setting $\ve = \eta^{d+1}$, and using \eqref{peta}, we see that $P_\eta -\ve \gtrsim \eta^d$ for $\eta$ small enough (depending on $R_1,\tau$ and $f_{\min}$). Thus, on the event $\cA_\eta \cap \cA_{2\eta}$, with probability at least $1 - 4\exp(-2n\eta^{2d+2})$, we derive
\beq |\widehat \delta_\eta - d | \lesssim  \eta + \frac{\ve}{P_\eta -\ve } + \frac{\ve}{P_{2\eta} -\ve}  \lesssim \eta,  \label{eta}
\eeq
for $\eta$ small enough (depending on $R_1,\tau$ and $f_{\min}$),  up to a constant that depends on  $R_1,\tau$ and $f_{\min}$.
Now setting $\eta = n^{-1/(2D+2)}$, we have $\wh d = \wh \delta_\eta = d$ on the event $\cA_\eta \cap \cA_{2\eta}$ as soon as $n$ is large enough so that the LHS of \eqref{eta} is strictly smaller than $1/2$, ending the proof.
\end{proof}


\end{document}